\documentclass[english]{extarticle}
\usepackage{booktabs}
\usepackage[T1]{fontenc}
\usepackage[utf8]{luainputenc}
\usepackage{geometry}
\geometry{verbose,tmargin=2.0cm,bmargin=2.0cm,lmargin=2.0cm,rmargin=2.0cm}
\usepackage{color}
\usepackage{float}
\usepackage{amsmath}
\usepackage{amsthm}
\usepackage{amssymb}
\usepackage{stmaryrd}
\usepackage{esint} 
\PassOptionsToPackage{normalem}{ulem}
\usepackage{ulem}
\usepackage{algorithm2e}

\usepackage{graphicx, caption, subcaption}
%%%%%%%%%%%%%%%%%%%%%%%%%%%%%% LyX specific LaTeX commands.
\floatstyle{ruled}
% \newfloat{algorithm}{tbp}{loa}
% \providecommand{\algorithmname}{Algorithm}
% \floatname{algorithm}{\protect\algorithmname}
%%%%%%%%%%%%%%%%%%%%%%%%%%%%%% Textclass specific LaTeX commands.

\newtheorem{thm}{\protect\theoremname}

\newtheorem{rem}[thm]{\protect\remarkname}

\newtheorem{lem}[thm]{\protect\lemmaname}

%%%%%%%%%%%%%%%%%%%%%%%%%%%%%% User specified LaTeX commands.

\newfloat{algorithm}{tbp}{loa}
\providecommand{\algorithmname}{Algorithm}
\floatname{algorithm}{\protect\algorithmname}

\providecommand{\definitionname}{Definition}
\providecommand{\lemmaname}{Lemma}
\providecommand{\propositionname}{Proposition}
\providecommand{\remarkname}{Remark}
\providecommand{\theoremname}{Theorem}

\usepackage{xcolor}

\newcommand{\A}{\mathcal{A}}
\newcommand{\G}{\mathcal{G}}

\newcommand{\E}{\mathcal{E}}
\newcommand{\N}{\mathcal{N}}

\newcommand{\PP}{\mathbb{P}}

\newcommand{\EX}{\mathbb{E}}

\newcommand{\PPP}{\textbf{P}}

\newcommand{\Mu}{\hat{\mu}}
\newcommand{\m}{\tilde{m}}

\newcommand*{\horzbar}{\rule[.5ex]{2.5ex}{0.5pt}}

\usepackage{footnote}
\newcommand{\Pm}{\mathbb{P}}
\makesavenoteenv{tabular}
\makesavenoteenv{table}

\newcommand{\As}{\bar{A}}
\newcommand{\Ap}{\bar{P}}

\usepackage{array}
\newcolumntype{?}{!{\vrule width 1pt}}
\newcolumntype{P}[1]{>{\centering\arraybackslash}p{#1}}

\DeclareMathOperator*{\argmax}{arg\,max}
\DeclareMathOperator*{\argmin}{arg\,min}

%\jmlrheading{1}{2000}{1-48}{4/00}{10/00}{meila00a}{Marina Meil\u{a} and Michael I. Jordan}

% Short headings should be running head and authors last names

\usepackage{latexsym}

\author{{Suhail M. Shah}
\date{}
\thanks{The author is with the department of Electrical Engineering, Indian Institute of Technology-Bombay, India-400076. (e-mail: suhailshah2005@lgmail.com).}}
\begin{document}

\title{Making Simulated Annealing Sample Efficient for Discrete Stochastic Optimization}

\maketitle

\begin{abstract}%
We study the regret of simulated annealing (SA) based approaches to solving discrete stochastic optimization problems. The main theoretical conclusion is that the regret of the simulated annealing algorithm, with either noisy or noiseless observations, depends primarily upon the rate of the convergence of the associated Gibbs measure to the optimal states. In contrast to previous works, we show that SA does not need an increased estimation effort (number of \textit{pulls/samples} of the selected \textit{arm/solution} per round for a finite horizon $T_{\text{hor}}$) with noisy observations to converge in probability. By simple modifications, we can make the \textit{total} number of samples per iteration required for convergence (in probability) to scale as $\mathcal{O}\big(T_{\text{hor}})$. Additionally, we show that a simulated annealing inspired heuristic can solve the problem of stochastic multi-armed bandits (MAB), by which we mean that it suffers a $\mathcal{O}(\log \,T_{\text{hor}})$ regret. Thus, our contention is that SA should be considered as a viable candidate for inclusion into the family of efficient exploration heuristics for bandit and discrete stochastic optimization problems.
\end{abstract}

\section{Introduction}
Consider the following optimization problem:
\begin{equation}\label{eq}
\min_{a \in \A} \mu_a := \EX_{ \omega} \,f(a,\omega),
\end{equation}
where $\A$ is a finite set (which may have some inherent topological structure), $\omega$ is a random variable and $f : \A \to \mathbb{R}$ is a real valued bounded function. The aim in stochastic optimization is to study and solve (\ref{eq}) in an efficient manner.  Finding suitable solution to this problem can be quite hard, since one may lack a direct access to the underlying probability distribution to compute $\mu_a$. In such situations, one may be faced with the task of optimizing only through samples $f(a,\omega)$. Given this, the primary concern in stochastic optimization is to obtain a good solution, i.e.,  $\mu_{a_T}$ is as small as possible for any algorithmic output $a_T$.

The above problem can also be studied in a `bandit setting' (\cite{lai}, \cite{aue}, \cite{bubeck}, \cite{tor}) where the emphasis is placed on the total loss or `regret' incurred by the algorithm. A stochastic bandit is a collection of distributions $\nu:= (P_a \,:a\in\A)$, where $\A$ is the set of available actions with mean-payoff $\mu_a := \int x \,dP_a(x)$. During any time instance $t$, referred to as a round, the learner chooses any action $a_t \in \A$, interacts with the environment following which a loss $X_t$ (or for a maximization problem, a reward) is revealed to the learner sampled from $P_{a_t}$. For instance, the learner may select an arm $a_1 \in \A$ in the first round and incur the loss $X_1$ drawn at random according to $P_{a_1}$. In any subsequent round $n$, the learner can select an arm $a_n$ based on all prior information available till the current round, i.e. the variables $(a_1,X_1,\cdots,a_{n-1},X_{n-1})$ and incur a loss $X_n$ independently of $(a_1,X_1,\cdots,a_{n-1},X_{n-1})$. Any strategy (formally a sequence of probability kernels defined on an appropraite probability space, see definition 4.7 in \cite{tor}) which instructs on how to select an arm at round $n$ based on previous information is referred to as a policy. In bandit optimization, the goal of the learner is to construct policies that minimize the expected cumulative regret, which is the
difference between the expected loss incurred and the loss incurred (in
expectation) by choosing the optimal action. More formally, let
$
a^* := \argmin_{a\in \A} \mu_{a} 
$
be the optimal loss for any round. At round $n$, the cumulative (pseudo) regret of a learner will then be
$$R_n = \EX \sum_{t=1}^n X_t - n \mu_{a^*}, $$
where the expectation is over the learner’s policy (see Section 4.4, \cite{tor} for a formal treatment). 

Bandit optimization encapsulates the exploration/utilization trade-off encountered in many practical situations. Studying the regret of the algorithm can give us a good idea of the number of estimates/samples an algorithm requires (sample complexity) to reach a suitable solution. The sample complexity may take precedence over other measures of performance, when, for instance, obtaining such samples may entail a lengthy simulation. This is frequently the case for most practical applications of importance, including reinforcement learning. 

In this paper, we study the regret of policies based on  Simulated Annealing (SA). To the best of our knowledge, there has been no formal study of SA in a bandit setting. Even for the noiseless setting, which is of importance for combinatorial optimization among other things, a regret analysis is missing. Thus, there exists a significant gap of knowledge in understanding how to efficiently utilize function samples when working with noisy estimates in SA. We take a step in this direction and show that SA can indeed search the solution space  quite efficiently.

\subsection{Related Literature and Contributions}
In this subsection, we briefly cover the prior work done on simulated annealing and clarify the motivation and contributions of the present work. SA was originally proposed in \cite{kirk} 
for finding globally optimum  configurations in large NP-complete problems. The foundational work on the theory of simulated annealing was done in the eighties. We briefly mention some of the works here\footnote{For the sake of brevity, this account is incomplete. We refer the reader to \cite{bert} (and the references within) for a more detailed history.}. In the seminal paper of \cite{hajek} the remarkable notion of depth was introduced and the algorithm was shown to converge under the assumptions of reversibility. In particular, \cite{hajek} was the first to pin down the minimum $\gamma$ for which the alogrithm would converge in probability for a cooling schedule of the form $T_t = \frac{\gamma}{\log(t+1)},$ where $t$ is the iteration count. \cite{mitra} provided finite time convergence bounds on the probability of selecting any state in terms of the graph radius. This work, along with \cite{gidas}, was one of the first to use the theory of time inhomogeneous Markov chains to study finite time performance.  \cite{tsitsiklis} used the perturbation theory of Markov chains to study the convergence of SA while \cite{holley} studied SA using Sobolev inequalities. The latter obtained the same characterization as \cite{hajek} of the cooling schedule with remarkably concise proofs by using the Dirichlet machinery. We will adopt their approach in the later part of this work.

The program of analysing SA with noisy observations  began with \cite{gelfand}. It was established that if the noise variance (in the sample observations) decreased by atleast a rate of $o(T^{-1}_t) = o(\log t)$, the asymptotic behaviour would be unaffected (the cooling schedule bounds reamined the same as in \cite{hajek}). In \cite{gutjahr}, a convergence in total variation was established with the requirement that the number of samples per iteration increase to the order $\mathcal{O}(n^{2+\epsilon}),$ $\epsilon>0$. A recent work by \cite{bout}, clarified that $n^\alpha$, $\alpha>0$ samples could also be sufficient to establish convergence in probability. This was also the first work, to the best of our knowledge, that provided upper bounds on the convergence \textit{rate} of SA with noisy observations. There are  other works, too numerous to mention here, that also study simulated annealing in a noisy setting. We refer the reader to Section 3 in \cite{branke} for a more comprehensive account.

%\begin{table}
% \centering
%\begin{tabular}{?p{5.5cm}?p{3cm}?p{2cm}?p{3cm}?  }
%\specialrule{1pt}{1pt}{1pt}
% \centering
%Work   & Total number of samples, $ \alpha>0$& Cooling schedule  & Regret  \\
% \specialrule{1pt}{1pt}{1pt}
% \centering
%Corollary 3.1, \cite{gelfand} & $\mathcal{O}(n(\log n)^{1+\alpha})$ & $\gamma>\gamma^*$ & $\infty$ \\
% \specialrule{1pt}{1pt}{1pt}
% \centering
%Theorem 3.1, \cite{gutjahr}&$\mathcal{O}(n^{3+\alpha}) $&  -same- & $\infty$ \\
% \specialrule{1pt}{1pt}{1pt}
% \centering
%Theorem 2, \cite{bout} & $\mathcal{O}(n^{1+\alpha}) $& $\gamma > \frac{2\gamma^*}{\alpha}$ & $\infty$ \\
% \specialrule{1pt}{1pt}{1pt}
% \centering
%This work & $\mathcal{O}(n)$ &   $\gamma > \frac{2\gamma^*}{\alpha}$  &$\mathcal{O}\Big( n^\alpha (\log n)^2+n^{1-\frac{\Delta_{\text{min}}}{2\gamma}}\Big) $\\
% \specialrule{1pt}{1pt}{1pt}
%\end{tabular}
%\caption{ \textbf{(i)} We assume that the time horizon $n$ is known to the learner. An infinite horizon may be handled via the doubling trick (see Section 2.3, \cite{lug}). \textbf{(ii) }The total number of samples is actually random for \cite{bout}, we quote the expected number (see eq. (28) in \textit{ibid}). \textbf{(iii)} $\gamma^*$ is the critical depth, see definition 4. \textbf{(iv)} $\Delta_{\text{min}} := \min_{a} (\mu_a - \mu_{a^*})$.  }
%\end{table}

\textit{Contributions of the present work :}  In this work, we study and establish bounds on the regret of SA with both noisy and noiseless observations to show that it can be used as an efficient exploration heuristic when one is faced with noisy estimates. For this setting, previous works maintain that the total number of samples required to guarantee convergence in probability for SA keep on increasing (sometimes as much as $\mathcal{O}(t^{2})$). This does not paint a flattering portrait of SA and makes it seem unappealing in comparison to the state of the art. Additionally, such a requirement is impossible to satisdy in most cases in an online scenario which is of primary interest for noisy setting SA. Nevertheless, as we shall see, SA can be a lot more sample efficient (in fact, it has a constant sample complexity), with simple modifications like keeping track of an empirical average of past samples at each arm and injecting diminishing extraneous noise to the algorithm or  allocating a very small fraction of the budget to uniformly search all the arms. The finite time horizon issue can be handled via the doubling trick (see Section 2.3, \cite{lug}). Our contributions are as follows: 
\begin{enumerate}
\item[(i)] In section 3, we establish an upper bound on the regret of SA with exact observations. This, in particular, is relevant for many combinatorial optimization problems where SA is used routinely to obtain (globally) optimal solutions (\cite{john1}, \cite{john2}, \cite{koul}). We show that there are two components in the regret bound: (i) A fast decaying transient component which depends upon the energy landscape of the function being minimized. (ii) A `steady state' component whose decay is governed by how fast the Gibbs measure converges to the state of the optimal states.

\item[(ii)]  In Section 4, we study modifications of SA in the noisy case for general graphical structures (with some assumptions). This is of importance for graph based optimization and bandit optimization with graphical constraints, wherein during any round the choice of the next arm is constrained to a certain subset of the arm set depending upon the current arm. Such a situation arises in many practical applications such as in remote sensing, e-commerce portals and image detection (see Section 1, \cite{avr} for more details). We show by a simple example that the standard bandit algorithms may fail in such a scenario. On the theoretical side, we show that the regret of the algorithm is primarily dependent upon the rate of convergence of the associated Gibbs measure to the optimal states by simple modifications to the core SA algorithm. This substantially improves on the prior work with respect to sample complexity.

\item[(iii)]  In Section 5, we propose an SA inspired heuristic to solve the multi armed bandit problem. We show that the regret can be upper bounded by $\mathcal{O}( \log n)$, where $n$ is the time horizon. This shows that SA can solve the multi armed bandit problem with optimal rates within constant factors. We note that this setting can be considered as a fully connected setting for SA, so one should intuitively expect better performance.

\item[(iv)] The theory is further substantiated with numerical experiments reported in Appendix A. The performance is benchmarked against standard bandit algorithms known to achieve optimal regret. 
\end{enumerate}

\textbf{Notation :} We have recalled all the notation used throughout the paper in Appendix B.

\section{Problem Formulation and the Algorithm}

The basic procedure of  simulated annealing consists of:
\begin{itemize}
\item A finite action/arm/solution set $\A$ with cardinality $|\A|=k$, to which is associated a real valued, bounded  cost function $\mu : \A \to \mathbb{R}$. The  action with the least cost is assumed to be unique and denoted by $a^*$.\footnote{This assumption is without any loss of generality for all the results in this work.} In the noisy setting, one does not know the exact value of $\mu_a$ and has only access to sample observations (drawn according to $P_{a}$). 

\item There is a bijective correspondence between the elements of $\A$, the set
of all the possible configurations of the optimization problem and the nodes of the graph. We denote the latter by $\G:=\{\A,\E\}$, where $\E$ is the set of edges. Accordingly, for each arm $a \in \A$, there is a set $\N(a) \subset \A / \{a\}$ called the neighbourhood set of $a$. It is assumed that $a \in \N(a') $ if and only if $a' \in \N(a)$. 

\item For any $a\in \A$, a collection of positive co-efficients $g(a,a'),\, a'\in \N(a)$ such that $$\frac{g(a,a')}{g(a)}  \leq 1, \,\,\,\text{ where }\,\,g(a):=\sum_{a' \in \N(a)} g(a, a').$$ Here, $\frac{g(a,a')}{g(a)}$ represents the probability of selecting a neighbouring candidate arm $a'$ for transition, when the present state of the algorithm is $a$. 

\item A cooling schedule $T: \mathbb{N} \to (0, \infty)$, assumed to be non increasing. $T(n)$ is also referred to as the temperature of the algorithm. 
\end{itemize}

Given these parameters, the SA algorithm consists of a discrete time inhomogeneous Markov chain, whose transition mechanism (for the noiseless case), denoted by $P(n)$ for temperature $T_n$, can be described as:

 \begin{equation}\label{MC}
  			 P_{x, y }(n) = \begin{cases}
   					 0, & \text{if $y \notin \N(x)$}\\
   					 \frac{g(x, y)}{g(x)} \exp\Big\{ \frac{- \big(\mu_y - \mu_x )^+\big)}{T_n} \Big\}, & \text{otherwise}
 					 \end{cases}
			\end{equation}
   and
           $$
           P_{x,x}(n) = 1 - \sum_{i \in \N(x)}  P_{x, i }(n), 
           $$
where $(x)^+:= \max(0,x)$. It is quite natural to assume a \textit{connected} graphical structure for modelling problems to be solved via SA (see Section 3, \cite{bert}). We recall that $g(a,a')/\sum_{i\in \N(a)}g(a,i)$ represents the probability of selecting the state $a' \in \N(a)$, given that the current state is $a$. A common practice is to set this equal to $\frac{1}{|\N(a)|}$ for all $a' \in \N(a)$. Let 
\begin{equation}\label{pi}
\pi_i(n) := \frac{g(i) \exp( -\frac{ \mu_i}{T_n} )}{ \sum_{j =1}^k g(j) \exp( -\frac{ \mu_j}{T_n} ) }. 
\end{equation}
We assume that
$$
g(a,a') = g(a',a) = 1
$$ 
whenever $a$ and $a'$ are neighbours. The above condition guarantees that (\ref{pi}) is the stationary distribution of $P(n)$, if we freeze the temperature at $T_n$ (see Proposition 3.1, \cite{mitra}). Additionally, this also makes sure that the detailed balance equation holds, so the time homogeneous Markov chain $P(n)$ is time reversible. We recall all the assumptions  taken thus far in the following: \\

\textbf{Assumptions:} 
\begin{enumerate}

\item[(i)] We assume that there exists a unique arm $a^*$, such that 
$$
a^* = \textrm{argmin}_{j \in \A} \,\,\, \mu_j.
$$

\item[(ii)]  The directed graph $\G$ is a connected. For all neighbours $a,a' \in \A$, we have $
g(a,a') = g(a',a).
$

\item[(iii)] The noise in the observations is assumed to be sub-Gaussian.  This means that for any $\epsilon>0$,
$$
\mathbb{P} (X \geq \epsilon)  \leq \exp \Big( -\frac{\epsilon^2}{2\sigma^2}\Big)
$$
for sone $\sigma > 0$, where $X$ is a generic noise variable. The family of sub-Gaussian bandits with $k$-arms having variance $\sigma^2$ is denoted by $\E^k_\text{SG}(\sigma^2)$.
\end{enumerate}

\subsection{An asymptotic perspective of SA with noisy observations}
It is instructive here, before we proceed to perform a theoretical analysis, to provide some intuition as to why SA with noisy observations should in principle work. Consider any two arms $i$ and $j \in \N(i)$ with distinct means $\mu_i<\mu_j$ and set $\Delta_{ij} :=  \mu_j -\mu_i$. Since we do not know $\mu_i$ and $\mu_j$, we will use the empirical mean estimates in their stead in the transition mechanism (\ref{MC}). Accordingly, let $\hat{\mu}_i(n)$ and $\hat{\mu}_{j}(n)$ be the calculated empirical mean at arm $i$ and $j$. Consider the event $G_n$ that $i$ and $j$ have been pulled $T_i(n)$ and $T_{j}(n)$ times respectively upto round $n$.  Then, since our arms are sub -Gaussian, the Cramer Chernoff method gives the following bound,
\begin{equation*}
	\mathbb{P} \Bigg(| \hat{\mu}_p(n) -\mu_p | \geq \frac{\Delta_{ij}}{2}\Bigg) \leq 2 e^{- \frac{T_l(n )  \Delta^2_{ij}}{16 \sigma^2}} \,\,\,\,\,\,\,\,\,\text{for }p=i,j,
\end{equation*}
where $T_l(n) =\min \{T_i(n),T_j(n)\} $. Suppose that $T_l(n) = \Omega\Big( \frac{16 \sigma^2 \log n }{ \Delta^2_{ij}}\Big)$, then  
\begin{equation*}
\mathbb{P} (G_n):=	\mathbb{P} \Bigg(| \hat{\mu}_p(n) -\mu_p |  \leq \frac{\Delta_{ij}}{2} \Bigg) \geq 1- \frac{2}{n}  \ \,\,\,\,\,\,\,\,\,\text{for }p=i,j.
\end{equation*}
Then, for event $G_n$, which occurs with high probability for large $n$, we have
\begin{align*}
 P(a_{n+1} = j | a_{n} =i  ) & = \exp\bigg(-\frac{(\hat{\mu}_j- \hat{\mu}_i)^+}{T_n}  \bigg\}  \Bigg)\\
 &\sim \exp\bigg(-\frac{(\mu_j - \frac{\Delta_{ij}}{2} - \mu_i+  \frac{\Delta_{ij}}{2}}{T_n}   \bigg) \\
&\sim \exp\bigg(-\frac{(\mu_j - \mu_i)}{T_n}  \bigg). 
\end{align*}
Conversely,  $P(a_{n+1} = i | a_{n} = j  ) =1$ with high probability. Thus, for fixed time horizon $n$, $\Omega(\log n)$ pulls for each arm appear to be sufficient to distinguish the arms with high probability and one could  guess that the convergence behaviour of SA would be somewhat similar to a noiseless setting. Admittedly, while this two arm bandit instance may only present a highly simplified view of the task at hand, the above facts do to some extent (especially for the standard MAB)  underpin the main aspects of the theory. It is not surprising the original study by \cite{gelfand} came to the conclusion that the post sampling noise variance should decrease at a minimum rate of $o( \log n )$ (see proposition 3.1 in \cite{gelfand}) for SA to succeed despite noisy observations.
\begin{figure}[h]
      \includegraphics[width=145mm,height=60mm]{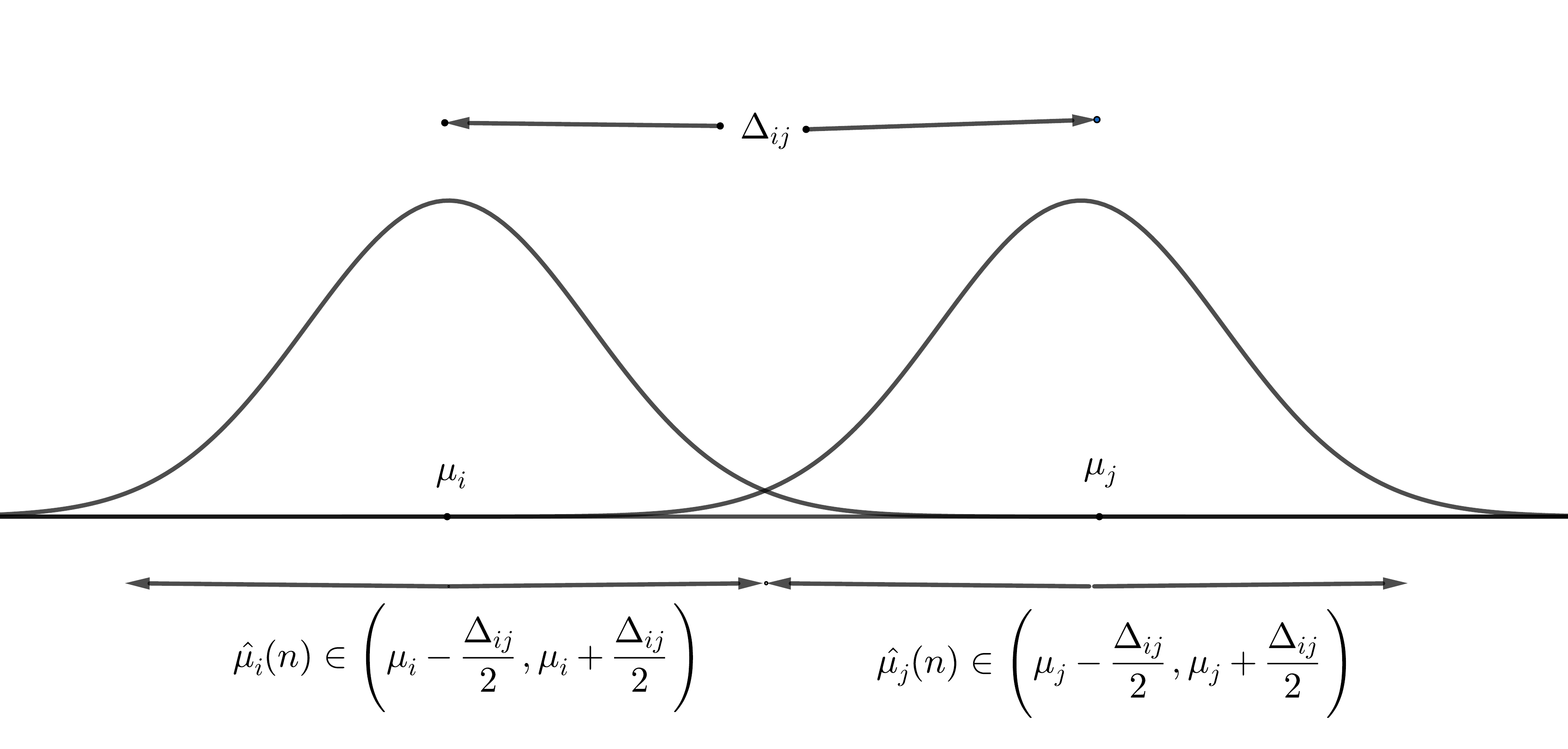}
      \caption{The event $\hat{\mu}_p(n) \in [\mu_p+\Delta_{ij}/2, \mu_p+\Delta_{ij}/2 ]$, $p=i,j$,  occurs with high probability for $T_p(n) = \log n$ }.
      \label{collapse}
 \end{figure}

The SA algorithm can be thought of as a (time inhomogeneous) random walk on $\A$, which gradually gets reinforced, via the Glauber dynamics, to prefer the low energy (mean) states (arms). So, it lends itself naturally to be analyzed as a vertex reinforced random walk (VRRW) \cite{pem}, \cite{benaim}. In what follows, we look at its workings through the lens of stochastic approximation. This will help us relate its asymptotic behaviour to the widely studied field of evolutionary games which may have broader implications for analysis of algorithms besides SA. Let $\{a_n\}_{n\geq0}$ denote the time inhomogeneous Markov chain at hand.  Also, let $a_n \in \A_n$ denote the value of the arm pulled at time $n$ and $z_i(n)$ denote  the total loss incurred by taking action $i$ up to time $n$, so that
$$
z_i(n) = \sum_{m=1}^n \mathbb{I} \{a_m = i\} \tilde{\mu}_{i}.
$$
where $\tilde{\mu}_{i} = \mu_{i} + \xi_{i}$ with $\xi_{i}$ being sub-Gaussian. Let $x(n)$ denote the empirical frequency, i.e. the $i$'\textit{th} component of $x(n)$ is the fraction of the total loss till time $n$, so that
$$
x(n) = \frac{z(n)}{n},
$$
\textit{We analyze the frequency with which any arm is visited  by studying $x(n)$ as a stochastic approximation scheme}. Accordingly, we re-write the above iteration component-wise as:
\begin{equation}\label{SA-d}
x_i(n) = \Big(1 -\frac{1}{n}\Big)x_i(n-1) + \frac{  \mathbb{I} \{a_{n-1} = i\} \tilde{\mu}_{i} }{n}
\end{equation}
or equivalently,
$$
x_i(n) = \Big(1 -\frac{1}{n}\Big)x_i(n-1) + \frac{1}{n} \bigg( \mathbb{I} (a_{n-1} = i) \mu_i  + M_{n} \bigg),
$$
where 
\begin{align*}
M_n &:= \mathbb{I} \{a_{n-1} = i\}  \tilde{\mu}_{i}  - \mathbb{I} (a_{n-1} = i ) \mu_{i}  \\&= \mathbb{I} \{a_{n-1} = i\} (   \tilde{\mu}_{i} - \mu_{i} ) 
\end{align*}
is a Martingale difference sequence. The probability transition mechanism in (\ref{SA-d}) is given by,
\begin{equation}\label{MC1}
\mathbb{P} (a_{n+1} = j | a_n=i) = \mathbb{I} \big\{j \in \N(a_n)\big\}\,\, \Bigg( \frac{g(i,j)}{g(i)}  \exp\bigg(-\frac{( \hat{\mu}_j - \hat{\mu}_i )^+}{T_n} \bigg)\Bigg).
\end{equation}
The scaled version of the previous iteration can be written as follows:
\begin{equation}\label{y}
y_i(n) = \Big(1 -\frac{1}{n}\Big)y_i(n-1) + \frac{1}{n} \bigg( \mathbb{I} (a_{n-1} = i)   +  \bar{M}_i (n) \bigg),
\end{equation}
where $y_i := \frac{x_i}{\mu_i} $ and $\bar{M}_i (n) : = \frac{M^i_n}{\mu_i}$. Let 
\begin{equation*}
\epsilon_n := \mathbb{P} (a_{n} = j | a_{n-1} =i) - \bar{\mathbb{P}} (\bar{a}_{n} = j | \bar{a}_{n-1}=i),
\end{equation*}
where $\bar{\mathbb{P}} (\bar{a}_{n} = j | \bar{a}_{n-1}=i)$ is given by (\ref{MC}) and $\{\bar{a}_n\}$ is the process generated by the transition mechanism $\bar{\mathbb{P}}$. To be more precise,  $\bar{\mathbb{P}} (\bar{a}_{n} = j | \bar{a}_{n-1}=i)$ is (\ref{MC1}) with noiseless estimates (i.e., $\hat{\mu}_i = \mu_i$, for all $i \in\A$). So, (\ref{y}) can be written as:
\begin{equation*}
y_i(n) = \Big(1 -\frac{1}{n}\Big)y_i(n-1) + \frac{1}{n} \bigg( \mathbb{I} (\bar{a}_{n-1} = i) + \epsilon_n  +  \bar{M}_i (n) \bigg).
\end{equation*}
The difference of the above equation from (\ref{y}) is that (provided $\epsilon_n \to 0$, see section 4.1 for a bound on this quantity), the probability of transition is governed by $\bar{\mathbb{P}}(\cdot|\cdot)$ instead of  $\mathbb{P}(\cdot|\cdot)$. For SA to succeed, we should have $y(n) \to y^*$, where 
\begin{equation}\label{truth}
y^*_{a^*} =1 \,\,\,\,\text{  and  }\,\,\,\, y^*_{a} =0 \,\,\forall a\neq a^*.
\end{equation}
For a fixed $T$, by the standard analysis of stochastic approximation with Markov noise (\cite{borkar}, Chapter 6), the sequence ${y(n)}$ will
almost surely track the asymptotic behavior of the o.d.e., \footnote{Actually, the time schedule does not have to be constant, the claim will hold for $T(n) =\frac{\gamma}{\log n}$.}
\begin{equation}\label{repl}
\dot{y}(t) = \pi_T(y(t))-y(t), 
\end{equation}
where $\pi_T(y)$ is the $y$-dependant stationary distribution of time homogeneous chain (\ref{MC}). The dynamics (\ref{repl}) is a special case of the replicator dynamics of evolutionary biology \cite{hof}, \cite{sand}, evolving on the unit simplex $\Delta^{k}$. Plugging the value of (quasi) stationary distribution for (\ref{MC}) in the above shows that any equilibrium point, say $y^*$, satisfies:
$$
y^*_i \propto \frac{\exp\Big( -\frac{\mu_i}{T} \Big)}{\sum_{j \in \A} \exp\Big( -\frac{\mu_j}{T} \Big)}.
$$
Letting $T\to 0$ will lead to (\ref{truth}). This again leads us to the conclusion that if the noise variance goes to zero (i.e. $\epsilon_n \to 0$ here), the noisy observations will not affect the algorithm. 

\subsection{Failure of Greedy Strategy:}

 In this subsection, we show by an example that the standard $\epsilon$-greedy algorithm may lead to a sub-optimal choice when the action sets are graphically constrained\footnote{We remark that the arguments presented can be easily extended to other exploration algorithms like the Boltzman exploration.} (see also Section 5, \cite{avr}). We do this by considering the bandit instance, $\mu := (1-\delta , 1+\delta, 1+2\delta , 1)$ with $0<\delta <1$, where the agents are connected via a linear graph.  We will show that for the $\epsilon$-greedy heuristic, there is a strictly positive probability of getting stuck in a sub-optimal choice (i.e. agent 4 with loss $1$) and thus the regret scales linearly with the time horizon $T$. %We recall the definition of regret:
%$$
%R(T) = T\mu^* -  \EX [\sum_{t=1}^T \mu_t],
%$$
%where $\mu^*$ is the optimal reward, $\mu_t$ is the reward obtained at time $t$ and the expectation is over the learners policy.
For our purposes, we can assume deterministic rewards without any loss of generality. One can easily deduce that the transition matrix $P$ for the $\epsilon$-greedy strategy, under such conditions, can be written as:
%$$
%P := \Big(1-\frac{\epsilon}{2}\Big)   \underbrace{
%\begin{bmatrix}
%1& 0 &0 \\
%0 & 0 & 1\\
%0 & 0 & 1
%\end{bmatrix}}_{A \rightarrow \text{Greedy selection}}
%+ \epsilon
% \underbrace{\begin{bmatrix}
%0 & \frac{1}{2} & 0 \\
%\frac{2}{3} & \frac{1}{6} & 0\\
%0 & \frac{1}{2} & 0\\
%\end{bmatrix}}_{B\rightarrow \epsilon\text{-exploration}}
%$$
$$
P := (1-\epsilon)   \underbrace{
\begin{bmatrix}
1& 0 &0 &0 \\
1 & 0 & 0 &0\\
0&0 & 0 & 1 \\
0&0 & 0 & 1 \\
\end{bmatrix}}_{A \rightarrow \text{Greedy selection}}
+ \epsilon
 \underbrace{\begin{bmatrix}
\frac{1}{2} & \frac{1}{2} & 0 &0 \\
\frac{1}{3} & \frac{1}{3} & \frac{1}{3} &0\\
0&\frac{1}{3} & \frac{1}{3} & \frac{1}{3}\\
0& 0 & \frac{1}{2} & \frac{1}{2}\\
\end{bmatrix}}_{B\rightarrow \epsilon\text{-exploration}}
$$
%{\color{red} Note that the first matrix corresponds to the $\alpha\uparrow\infty$ limit in our scheme.} 
The chain, without exploration, moves deterministically to the neighbour
(including itself) with the lower loss. For $\epsilon \in (0,1)$, the $\epsilon$-greedy policy can easily be proved to have a stationary distribution that concentrates equally on $\{1,4\}$ by the symmetry of the problem which shows a positive probability of getting stuck in a sub-optimal choice. More concretely, suppose that the initial probability distribution over the states is given by $\nu(0)=(0,0,0,1)$ and the exploration factor $\epsilon :=\frac{1}{T}$, where $T$ is the time horizon.. Then, the distribution at time $n$ denoted by  $\nu(n)$ can be written as
\begin{align*}
\nu(n) &= \nu(0) P^n \nonumber \\
&=\nu(0)\begin{bmatrix}
\text{*} & \text{*} &\text{*} &\text{*} \\
\text{*} & \text{*} & \text{*} &\text{*}\\
\text{*} & \text{*} & \text{*} &\text{*} \\
\text{*} & \text{*} & \text{*} & 1 -\frac{\epsilon}{2} \\
\end{bmatrix}^n
\end{align*}
 Let $\nu_i(n)$ for $1 \leq n \leq T$ denote the probability of picking state $i$ at step $n$. Then with $\epsilon = \frac{1}{T}$, we have
\begin{equation}
\label{greedy}
\nu_4(n) \geq \Big(1-\frac{\epsilon}{2}  \Big)^n
\geq e^{-\frac{n\epsilon}{2-\epsilon}} \geq \frac{1}{e},
\end{equation}
where we have used the fact that
$$
(1-x) \geq e^{-\frac{x}{1-x}}, \text{ for } x \in (0,1)
$$
in the second inequality and that $n\epsilon \leq 1$ for $n \le T$ in the third inequality.
The last inequality in (\ref{greedy}) gives a strictly positive lower bound on the probability
independent of $T$. Contrast this with the standart results of $\epsilon$-greedy policy where in the absence of graphical constraints, optimality is achieved is with the same time step.
%
%If we consider decreasing $\epsilon = \varepsilon(n)$ independent of $T$, we are back in the paradigm of the present work and for $\varepsilon(n)$ given by (\ref{epreduce}), one can directly use the two time scale argument  to show convergence to the optimal choice, using the fact that the stationary distribution of the $\epsilon$-greedy policy above concentrates on the optimum as $\epsilon\downarrow 0$ (details omitted).
%
%}

%in this specific casethe above procedure leads to a lower bound that is $O\left(e^{-\sum_{m=1}^na(m)}\right)$, which can  degenerates to zero and become non-informative, e.g., for $a(n) = \frac{1}{n}$, but remains bounded away from zero, e.g., for $a(n) = \frac{1}{n^{2/3}}$.
%Even if the above bound degenerates for $a(n) = \frac{1}{n}$ we still can apply a Stochastic Approximation
%argument to show that the noise vanishes sufficiently fast and the process becomes stuck in the sub-optimal solution.
%The choice of $a(n) = \frac{1}{n}$ corresponds to the $\epsilon$-greedy policy described in \cite{Auer},
%where the $\epsilon$-greedy algorithm with the given $\epsilon$-schedule always finds optimum
%in the context of unconstrained dynamics.}

% We now write the regret in the following equivalent form, with $\Delta_i := \mu^* - \mu_i$ and $T_i:=$ the total number of visits to agent $i$:
% \begin{align*}
%  R(T) &= \sum_{i=1}^3 \Delta_i \EX T_i \\
%  &= \sum_{i=1}^3 \Delta_i \sum_{t=1}^T \nu_i(t)\\
%  &\geq (1-\delta)\frac{T}{e},
% \end{align*}
%using (\ref{greedy}), which shows that the regret scales linearly.

\section{Noiseless Case:}
In this section we perform a regret analysis of the simulated annealing algorithm with exact observations. We first briefly review the mode of convergence for time inhomogeneous Markov Chains and introduce some theoretical constructs that will also be of use to us later. We presently recall the notion of weak ergodic convergence. Let
\begin{equation*}
  \PPP(m,n) := \prod_{t=m}^{n -1}P(t) 
\end{equation*}
for any time inhomogeneous chain with transition mechanism $\{P(t)\}_{t\geq0}$.

\bigskip

\textbf{Definition 1} (\textit{Weak Ergodic Convergence}) A time-inhomogeneous Markov chain is said to be weakly ergodic if, for all $m$,\footnote{$\|x\|$ is always the $\ell_1$ norm and $\|A\|$ the infinity norm in this paper (see Appendix B for definitions). }
$$
\lim_{n \to \infty} \sup_{\nu(0),\tilde{\nu}(0)}  \| \nu(0) \PPP(m,n)-\tilde{\nu}(0) \PPP(m,n)\| =0.
$$
The above definition implies a tendency towards equality of the rows of $P(m, n)$, i.e., a `loss of memory' of the initial conditions. The investigation of conditions under which weak ergodicity holds is aided by the introduction of the following quantity:

\bigskip

\textbf{Definition 2} (\textit{Coefficient of Ergodicity})  Given a stochastic matrix $P$, its coefficient of ergodicity, denoted by $\kappa(P)$, is defined to be :
$$
\kappa(P) = \frac{1}{2}\max_{i,j}  \sum_{s=1}^k |P_{is} - P_{js}| = 1- \min_{i,j} \sum_{s=1}^k \min \{P_{is},P_{js}\}.
$$
We recommend the reader to \cite{issac} or \cite{seneta} for a detailed discussion of the importance of using ergodicity coefficients to study time inhomogeneous Markov chains. The main two properties which we will use are (Theorem V.2.4, \cite{issac}):
\begin{equation}\label{prop}
\kappa(PQ) \leq  \kappa(P)\kappa(Q) \,\,\text{ and }\,\, \|  PQ\| \leq \|P\| \kappa(Q)
\end{equation}
for any stochastic matrices $P$, $Q$. We next define the concept of critical depth,  first introduced in \cite{hajek}:

\bigskip

 \textbf{Definition 3} (Critical Depth) Let $p= (i_0,\cdots,i_n)$ be any path in $\G$, i.e., for any $i_p \in p$, $i_{p+1} \in \N(i_p)$. Let
\begin{equation*}
\text{Elev}(p) := \max_{0 \leq m \leq n} \mu_{i_m}\,\,\,\,\text{and}\,\,\,\,
e(p) := \text{Elev}(p) -\mu_{i_0} -\mu_{i_n}.
\end{equation*}
Let $\mathcal{E}(p) = \sup_{p\in \mathcal{P}} e(p) $, for  any choice of path $\mathcal{P}$. Also, let $\textbf{e} $ be the minimum value of $\mathcal{E}(p)$ as $\mathcal{P}$ runs over all selections of allowable paths. Then, the \textbf{critical depth}, denoted by $\gamma^*$, is defined to be 
$
\gamma^* :=  \textbf{e}  + \mu_{a^*}.
$
\\

The formal statement of critical depth is a bit hard to digest. A high level intuition for $\gamma^*$ is that it can be thought of as the least upper bound on the energy barrier that one has to climb in order to reach the optimal arm. To be more precise, let us say that arm $i$ communicates with the optimal arm $a^*$ at height $h$, if there exists a path starting at $i$ and ending at $a^*$ and such that the largest value of any arm encountered along the path is $\mu_i+ h$. Then, $\gamma^*$ is the smallest number such that every $i\in \A$ communicates with $a^*$ at height $\gamma^*$.  We will later see how the ergodicity co-efficient is related to the critical depth (see eq. (\ref{tttt})). Let
$$L:= \max_{i\in \A}\max_{j \in \N(i)} |\mu_j - \mu_i| \,\,\,\text{ and }\,\,\, R:= \min_{i\in \A}\min_{j\in \N(i)} \frac{g(i,j)}{g(i)}$$ 
We have for any $i$ and $j \in \N(i)$ in (\ref{MC}),
\begin{equation}\label{stau}
	P_{ij}(m) \geq R e^{-\frac{L}{T_m}} .
\end{equation}
From the definition of ergodicity coefficient,  we have 
\begin{align*}
\kappa(\PPP(m-\tau,m) ) &\leq   1 - \min_{i,j} \Big\{ \sum_{s=1}^k \min \Big(\PPP_{is}(m-\tau,m), \PPP_{js}(m-\tau,m)\Big) \Big\} \\
&\leq 1 - \max_s \min_i \PPP_{is}(m-\tau,m).
\end{align*}
The second inequality follows from eq. (4.7), Section 4.3, \cite{seneta}. Let $\tau$ be the smallest integer such that $ \max_s \min_i P_{is}(m-\tau,m)  >0$. Using (\ref{stau}) in the above we have:
\begin{equation}\label{kappa}
\kappa(\PPP(m-\tau,m) ) \leq 1- R^{\tau} \exp(- \frac{\tau L}{T_{m-1}}).
\end{equation}
If for $T_m := \frac{\gamma}{\log m}$, the condition $\frac{\tau L}{\gamma} \leq 1$ is satisfied, then the chain corresponding to (\ref{MC}) is \textit{weakly ergodic} (see Theorem 5.1, \cite{mitra}). Our aim will be to get an upper bound on how fast such a convergence takes place and its dependence on $\gamma$, by establishing a regret bound. To do so, we first prove a preliminary result to bound the probability of selecting a sub-optimal arm.  
\begin{lem} 
Let the cooling schedule be of the parametric form $T_t =\frac{ \gamma }{\log   (t+1)}$. Then, there exists a constant $m_0 <\infty$ such that the probability of selecting a sub-optimal arm for any $t\geq m_0$ is bounded by
\begin{equation}\label{1-term-1}
\mathbb{P}(a_t \in \A/ a^*) \leq   \frac{2 \exp\Big(  \frac{R^\tau }{\tau^{\frac{\tau L}{\gamma}}(1-\frac{\tau L}{\gamma})} (\frac{m_0}{\tau}+1)^{1-\frac{\tau L}{\gamma}} \Big)}{ \exp\Big(  \frac{R^\tau}{\tau(1-\frac{\tau L}{\gamma})}  t^{1- \frac{\tau L}{\gamma}}  \Big)} + \sum_{a\in\A/a^*} \frac{4g(a)}{g(a^*)} (t+1)^{-\frac{\mu_a -\mu_{a^*} }{\gamma}},
\end{equation}
so that,
$$
\mathbb{P}(a_t \in \A/ a^*)  = \mathcal{O} \Bigg(\exp\Big( - \frac{R^\tau}{\tau(1-\frac{\tau L}{\gamma})}  t^{1- \frac{\tau  L}{\gamma}}  \Big)+  \sum_{a \in \A/a^*}\frac{1}{ (t+t_0)^{\frac{\mu_a -\mu_{a^*} } {\gamma}}}\Bigg).
$$
\end{lem}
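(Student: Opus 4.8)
The plan is to split the event of landing on a sub-optimal arm into a \emph{transient} part, measuring how far the law of $a_t$ is from the frozen Gibbs measure $\pi(t)$ of (\ref{pi}), and a \emph{steady-state} part, measuring the mass that $\pi(t)$ itself assigns to $\A\setminus\{a^*\}$. Writing $\nu(t)$ for the law of $a_t$ (so that $\nu(t)=\nu(m_0)\PPP(m_0,t)$ for $t\geq m_0$), the triangle inequality gives
\begin{equation*}
\mathbb{P}(a_t \in \A/a^*) = 1 - \nu_{a^*}(t) \leq \underbrace{\|\nu(t) - \pi(t)\|}_{\text{transient}} + \underbrace{\sum_{a \in \A/a^*}\pi_a(t)}_{\text{steady state}},
\end{equation*}
using $1-\nu_{a^*}(t)=\sum_{a\neq a^*}\pi_a(t)+(\pi_{a^*}(t)-\nu_{a^*}(t))$ and $\pi_{a^*}(t)-\nu_{a^*}(t)\leq\|\nu(t)-\pi(t)\|$. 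The two terms of (\ref{1-term-1}) correspond exactly to these two contributions, and the remaining task is to bound each.

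For the steady-state term I would use the explicit form (\ref{pi}): since $\pi_{a^*}(t)\leq 1$, dividing through gives $\pi_a(t)\leq \pi_a(t)/\pi_{a^*}(t)=\frac{g(a)}{g(a^*)}\exp\!\big(-(\mu_a-\mu_{a^*})/T_t\big)$, and substituting $T_t=\gamma/\log(t+1)$ converts the exponential into the power law $(t+1)^{-(\mu_a-\mu_{a^*})/\gamma}$. Summing over $a\neq a^*$ reproduces the second term of (\ref{1-term-1}), the slack factor $4$ leaving room to also absorb the drift correction discussed below.

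The transient term is where the ergodicity machinery enters and is the main computation. Using the vector form of the contraction in (\ref{prop}), namely $\|vP\|\leq\kappa(P)\|v\|$ for any zero-sum vector $v$, one gets $\|\nu(t)-\pi(t)\|\leq 2\kappa(\PPP(m_0,t))+(\text{drift})$, the factor $2$ coming from $\|\nu(m_0)-\pi(m_0)\|\leq 2$. To bound $\kappa(\PPP(m_0,t))$ I would partition $[m_0,t]$ into consecutive blocks of length $\tau$, apply submultiplicativity $\kappa(PQ)\leq\kappa(P)\kappa(Q)$ together with the per-block estimate (\ref{kappa}), obtaining $\kappa(\PPP(m_0,t))\leq\prod_k\big(1-R^\tau(k\tau)^{-\tau L/\gamma}\big)$. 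Using $1-x\leq e^{-x}$ and comparing the resulting sum $\sum_k R^\tau(k\tau)^{-\tau L/\gamma}$ to the integral $\int x^{-\tau L/\gamma}\,dx$, which grows like $t^{1-\tau L/\gamma}$ precisely because the weak-ergodicity condition $\tau L/\gamma\leq 1$ keeps the exponent $1-\tau L/\gamma$ positive, yields the stretched exponential $\exp\!\big(-\tfrac{R^\tau}{\tau(1-\tau L/\gamma)}t^{1-\tau L/\gamma}\big)$, with the lower endpoint producing the constant $\exp\!\big(\tfrac{R^\tau}{\tau^{\tau L/\gamma}(1-\tau L/\gamma)}(\tfrac{m_0}{\tau}+1)^{1-\tau L/\gamma}\big)$ in the numerator of the first term. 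The integer $m_0$ is chosen so that from that point on the block estimate (\ref{kappa}) applies with $\tau L/\gamma\leq 1$ in force.

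The hard part will be controlling the drift $\|\pi(m_0)\PPP(m_0,t)-\pi(t)\|$, i.e. the error incurred because the target Gibbs measure is itself moving as the temperature cools. The clean route is to telescope
\begin{equation*}
\pi(m_0)\PPP(m_0,t)-\pi(t)=\sum_{m=m_0}^{t-1}\big(\pi(m)-\pi(m+1)\big)\PPP(m+1,t),
\end{equation*}
using crucially that $\pi(m)$ is stationary for $P(m)$ so that $\pi(m)P(m)=\pi(m)$, and then $\|\cdot\|\leq\sum_m\|\pi(m)-\pi(m+1)\|\,\kappa(\PPP(m+1,t))$. One must then show that the per-step drift $\|\pi(m)-\pi(m+1)\|$, which is $\mathcal{O}(1/(\gamma m))$ by differentiating (\ref{pi}) in the temperature along $T_t=\gamma/\log(t+1)$, is dominated after the $\kappa$-weighting by the steady-state power law and hence absorbed into the slack factor $4$. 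Establishing this domination cleanly, rather than the block/integral bookkeeping, is the real obstacle.
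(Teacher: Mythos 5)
Your decomposition into a transient term and a steady-state term, the bound on the steady-state term via $\pi_a(t)/\pi_{a^*}(t)$, and the block-of-length-$\tau$ ergodicity-coefficient estimate yielding the stretched exponential are all exactly the paper's argument (the paper phrases the transient/steady-state split through the rank-one matrix $Q(t)$ whose rows are $\pi(t)$, but that is cosmetic). You also correctly identify the telescoping identity $\pi(m_0)\PPP(m_0,t)-\pi(t)=\sum_{m}(\pi(m)-\pi(m+1))\PPP(m+1,t)$, which is precisely how the paper handles the drift. The problem is that you stop there and propose to finish by bounding the per-step drift by $\mathcal{O}(1/(\gamma m))$ and hoping the $\kappa$-weighting saves the sum. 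That route fails as stated: $\sum_{m=m_0}^{t}1/(\gamma m)=\mathcal{O}(\log t)$ diverges, and the weights $\kappa(\PPP(m+1,t))$ are not small for $m$ close to $t$, so there is no domination by the power law $(t+1)^{-\Delta_a/\gamma}$ coming from a crude magnitude bound. You flag this yourself as ``the real obstacle,'' and it is a genuine gap rather than routine bookkeeping.

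The idea you are missing is the one that actually defines $m_0$ in the lemma. Differentiating (\ref{pi}) in $T$ and letting $T\to 0$, one finds that $\dot\pi_{a^*}(T)<0$ always, while for each $a\neq a^*$ the derivative becomes positive for all sufficiently small $T$; since $T_t$ is decreasing, there is an $m_0<\infty$ such that for all $t\geq m_0$ the coordinate $\pi_{a^*}(t)$ is increasing in $t$ and every $\pi_a(t)$, $a\neq a^*$, is decreasing. With this monotonicity, $\|\pi(m)-\pi(m+1)\|=2\sum_{a\neq a^*}\bigl(\pi_a(m)-\pi_a(m+1)\bigr)$ with a fixed sign, so the drift sum telescopes exactly (one simply bounds the $\kappa$ factors by $1$) to $2\sum_{a\neq a^*}\bigl(\pi_a(m_0)-\pi_a(t)\bigr)$, which is controlled by the same Gibbs-measure estimate as the steady-state term. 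That is where the factor $4$ (rather than $2$) in the second term of (\ref{1-term-1}) comes from; it is not ``slack'' absorbing an $\mathcal{O}(1/(\gamma m))$ residue. Without the monotonicity/telescoping step your argument does not close.
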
 
\begin{proof}
Let $\nu(0)$ denote the initial probability distribution. The probability of selecting a bad arm  can be written as:
\begin{align*}
\mathbb{P}(a_t \in \A/ a^*) &= \sum_{i \in \A/a^*} [\nu(0)\PPP(0,t)]_i ,\\
&\leq \|  \nu(0)\PPP(0,t) -e^* \|, 
\end{align*}
where $e^*$ is the unit vector with all entries zero expect the one corresponding to $a^*$. To prove the result we upper bound the term $ \| \nu(0)\PPP(0,t) -e^*\|$. Accordingly, we first decompose this quantity as:
\begin{equation}\label{main-dec}
\| \nu(0)\PPP(0,t) -e^*\| \ \leq\underbrace{\|\nu(0)\PPP(0,t) - \nu(0)Q(t) \|}_{\text{Term 1}} +\underbrace{ \| \nu(0)Q(t)- e^*\|}_{\text{Term II}},
\end{equation}
where :
\[
Q(t) =
\left[
  \begin{array}{ccc}
    \horzbar & \pi(t) & \horzbar \\
             & \vdots    &          \\
    \horzbar & \pi(t) & \horzbar
  \end{array}
\right]
\]
with $\pi(t)$ being the quasi-stationary distribution defined in (\ref{pi}).

\textbf{\\Term I: } We have,
\begin{align*}
\|\nu(0)\PPP(0,t) - \nu(0)Q(t) \|    &\leq   \| \PPP(0,t) -  Q(t) \|.
\end{align*}  
Furthermore, we have for any $0<\m<t$, the following decomposition
\begin{equation}\label{dec-2}
 \| \PPP(0,t) -  Q(t) \|  \leq   \| \PPP(0,t)  -   Q(\m) \PPP(\m,t) \|+     \|  Q(\m)\PPP(\m,t)  - Q(t) \|.
\end{equation} 
We deal with each of these terms separately. Consider the first term:
\begin{align*}
 \| \PPP(0,t)  -    Q(\m) \PPP(\m,t) \| &\leq \| \PPP(0,\m)-Q(\m)\|  \kappa(\PPP(\m,t)) \\
& \leq  2 \prod_{k=t_0}^{\frac{t}{\tau}-1} \kappa(\PPP(k\tau,(k+1)\tau)) 
\end{align*}\label{term1'}
for some $t_0 \tau = \m$, assuming $t_0 \in \mathbb{Z}^+$ w.l.o.g. Since $\kappa(\PPP(m-\tau,m)) \leq 1 - R^\tau \exp\Big( -\frac{\tau L}{T_{m-1}}\Big)$ (see eq. (\ref{kappa})), we get
\begin{align}\label{term1'}
 \| \PPP(0,t)  -    Q(\m) \PPP(\m,t) \| &\leq 2 \prod_{k=t_0}^{\frac{t}{\tau}-1}  \Big[1 - R^\tau \exp\Big( -\frac{\tau L}{T_{k\tau+\tau-1}}\Big)\Big] \nonumber \\
&=  2 \prod_{k=t_0}^{\frac{t}{\tau}-1}  \Big[1 -  R^\tau \exp\Big(-\frac{\tau  L}{\gamma }\log (k\tau+\tau) \Big) \Big] \nonumber  \\
&=  2 \prod_{k=t_0}^{\frac{t}{\tau}-1}  \Big[1 -   \frac{R^{\tau}}{(k\tau+\tau)^\frac{\tau  L}{\gamma}} \Big] \nonumber  \\
&\leq    2\exp\Big( -R^\tau \sum_{k=t_0}^{\frac{t}{\tau}-1}\frac{1}{\big((k+1)\tau\big)^\frac{\tau  L}{\gamma}} \Big) \nonumber  \\
&=  2\exp\Big( - \frac{R^\tau}{\tau^{\frac{\tau  L}{\gamma}}}  \sum_{k=t_0}^{\frac{t}{\tau}-1}\frac{1}{\big(k+1\big)^\frac{\tau  L}{\gamma}} \Big) \nonumber  \\
&\leq 2 \exp\Big( - \frac{R^\tau}{\tau^{\frac{\tau  L}{\gamma}}(1-\frac{\tau L}{\gamma})}  \Big(  \Big(\frac{t}{\tau}\Big)^{1-\frac{\tau L}{\gamma}}- (t_0+1)^{1-\frac{\tau L}{\gamma}} \Big) \Big)  \nonumber  \\
 \| \PPP(0,t)  -    Q(\m) \PPP(\m,t) \| &\leq \frac{2 \exp\Big(  \frac{R^\tau }{\tau^{\frac{\tau  L}{\gamma}}(1-\frac{\tau L}{\gamma})} (t_0+1)^{1-\frac{\tau L}{\gamma}} \Big)}{ \exp\Big(  \frac{R^\tau}{\tau(1-\frac{\tau L}{\gamma})}  t^{1- \frac{\tau  L}{\gamma}}  \Big)}.
\end{align}
We now consider the second term in (\ref{dec-2}). We note that stationarity of $\pi_t$ implies $Q(t)P(t) = Q(t)$. So,
\begin{multline*}
Q(l)\PPP(l,m) = Q(l) \PPP(l,m) -Q(l+1) \PPP(l+1,m) + Q(l+1) \PPP(l+1,m)\\
= \big( Q(l) -Q(l+1) \big)\PPP(l+1,m)  +Q(l+1)\PPP(l+1,m) .
\end{multline*}
Recursion on the above equation gives
\begin{align*}
Q(l)\PPP(l,m) &= \big( Q(l) -Q(l+1) \big)\PPP(l+1,m)  + \big( Q(l+1) -Q(l+2) \big)\PPP(l+2,m)\\
& \,\,\,\,\,\,\,\,\,\,\,\,\,\,\,+Q(l+2)\PPP(l+2,m) \\
&= \sum_{t=l}^{m-1} \big(Q(t) -Q(t+1)\big)  +Q(m).
\end{align*}
Take $l=\m$ and $m=t$ in the above to get:
\begin{align*}
\| Q(\m)\PPP(\m,t) -Q(t) \| &\leq \sum_{t=\m}^{t-1} \| Q(t) -Q(t+1)\|. 
\end{align*}
To bound this we have to study the decay of the  transition probabilities w.r.t cooling schedule. Accordingly, one can verify the following relation in a straightforward manner:
\begin{multline*}
\frac{g(a)T^2}{\pi^2_a(t)} \dot{\pi_a}(T) = - \sum_{a' \in \A} g(a')(\mu_{a'}  - \mu_{a} ) \exp\Big(- \frac{\mu_{a'}  - \mu_{a} }{T} \Big)  
\\=   \sum_{a' : \mu_{a'} < \mu_{a}} g(a')(\mu_{a}  - \mu_{a'} ) \exp\Big( \frac{\mu_{a}  - \mu_{a'} }{T} \Big)- \sum_{a' : \mu_{a'} > \mu_{a}} g(a')(\mu_{a'}  - \mu_{a} ) \exp\Big(- \frac{\mu_{a'}  - \mu_{a} }{T} \Big).
\end{multline*} 
 For $a =a^*$, it can be easily seen that  $\dot{\pi_i}(T) < 0$, so that $\pi_{a^*} (t+1) > \pi_{a^*} (t)$ (since temperature $T$ is a decreasing function of $t$ ). For $a \in \A/a^*$, we have $\lim_{T \to 0} \dot{\pi}(T) > 0$. Thus, there exists a $m_0<\infty$, such that 
$$
\dot{\pi}_a(t) > 0,\,\,\,\,\, \forall t\geq m_0, \,a \in \A/a^*   
$$    
which implies $\pi_{a}(t+1) < \pi_{a}(t)$, for all $t \geq m_0$. Then, for all $t\geq m_0$
\begin{align*}
\| \pi(t)- \pi(t+1)\| &=   \sum_{a \in \A/a^*}  |\pi_{a}(t)- \pi_a(t+1)| +  | \pi_{a^*}( t ) -  \pi_{a^*}( t+1) |\\
& =   \sum_{a \in \A/a^*}  \pi_{a}(t)- \pi_a(t+1) +  \pi_{a^*}( t + 1 ) -  \pi_{a^*}( t) \\
& = 2 \sum_{a \in \A/a^*}  \pi_{a}(t+1)- \pi_a(t),
\end{align*}
where we have used the fact that $ \pi_{a^*}(t)  = 1 - \sum_{a \in \A/a^*}\pi_{a}(t) $. Thus, we have, with $\m = m_0$, the following bound 
\begin{equation}\label{alt-bound}
\sum_{t= m_0}^{t-1} \| \pi(t+1)- \pi(t) \| \leq 2 \sum_{t =  m_0}^{t-1}    \sum_{a \in \A/a^*} \pi_{a}(t+1)- \pi_a(t) \leq   2 \sum_{a \in \A/a^*} \pi_a(t).
\end{equation}
The above term will be bounded in the same way as Term II as follows:

\textbf{\\Term II :} We have
\begin{align*}
\|\nu(0)Q(t) -e^*\|   & \leq \| \pi(t) -e^*\|\\
& = 1 - \pi_{a^*}( t) + \sum_{a \in \A/a^*} \pi_a(t) \\
&= 2 \sum_{a \in \A/a^*} \pi_a(t).
\end{align*}
Recalling the the definition of $\pi_a(t)$, we have
\begin{align}\label{term2}
\|\nu(0)Q(t) -e^*\|   & \leq  2\frac{\sum_{a \in \A/a^*} g(a)\exp\Big({-\frac{\mu_a}{T_t}}\Big) }{\sum_{a \in \A } g(a) \exp\Big({-\frac{\mu_a}{T_t}}\Big)} \nonumber \\
 & = 2 \frac{\sum_{a\in\A/a^*} \frac{g(a)}{g(a^*)}\exp\Big({-\frac{(\mu_a - \mu_{a^*})}{T_t}} \Big) } {1+  \sum_{a \in \A/a^* } \frac{g(a)}{g(a^*)} \exp\Big({-\frac{(\mu_a - \mu_{a^*})}{T_t}} \Big)}\nonumber\\
 & \leq 2\sum_{a\in\A/a^*} \frac{g(a)}{g(a^*)} (t + 1)^{-\frac{\mu_a -\mu_{a^*} }{\gamma}}.
\end{align}
To conclude the proof of  the lemma, bound Term I  in (\ref{main-dec}) using (\ref{term1'}) with $m_0 = \m  = t_0\tau$, and (\ref{alt-bound}) in (\ref{dec-2}). Term II has been bounded in (\ref{term2}).
\end{proof}
We recall that $\tau$ is the smallest integer such that $ \max_s \min_i P_{is}(m-\tau,m)  >0$. It was proved in \cite{chiang} using the results of \cite{vent} (see also Proposition 7.2, \cite{tsitsurvey}) that the smallest such $\tau$ satisfies:
\begin{equation}\label{tttt}
\gamma^* \leq \tau L
\end{equation}
Thus, we have for  $\gamma > \tau L \geq \gamma^*$,
$$
0 < 1-\frac{\tau L}{\gamma} \leq 1 - \frac{\gamma^*}{\gamma}
$$
to ensure the decay of the first term in (\ref{1-term-1}). Thus, we recover the characterization of the cooling schedule of \cite{hajek} for finite time convergence bounds. We can use Lemma 1 to upper bound the regret.

\begin{thm}
Let $\Delta:=  \max_{i \in \A/a^*}\{ \mu_{i} - \mu_{a^*}\}$ and $\gamma >\tau L\geq \gamma^*$. The regret of SA is bounded by:
\begin{multline*}
R_n \leq   \frac{2 \Delta \exp\Big(  \frac{R^\tau }{\tau^{\frac{\tau L}{\gamma}}(1-\frac{\tau L}{\gamma})} (\frac{m_0}{\tau}+1)^{1-\frac{\tau L}{\gamma}} \Big)}{  \Big( \frac{R^\tau}{\tau(1-\frac{\tau L}{\gamma})}\Big)^{\big\lfloor \frac{1}{1 - \frac{\tau L}{\gamma}}\big\rfloor }} \Gamma\Bigg( 1+ \Bigg\lceil \frac{1}{1 - \frac{\tau L}{\gamma}}\Bigg\rceil \Bigg) 
+\sum_{a\in\A/a^*} \frac{4 \Delta g(a)}{g(a^*)(1-\frac{\mu_a -\mu_{a^*} }{\gamma} )} (n+1)^{1-\frac{\mu_a -\mu_{a^*} }{\gamma}},
\end{multline*} 
for $n \geq m_0$, where $\Gamma$ is the gamma function. So, we have,
$$
R_n = \mathcal{O}\big( n^{1-\frac{\mu_a -\mu_{a^*} }{\gamma}}\big) 
$$
\end{thm}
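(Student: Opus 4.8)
The plan is to reduce the regret to a sum of per-round error probabilities and then apply Lemma 1 term by term. Since the observations are noiseless, at round $t$ the incurred loss is exactly $\mu_{a_t}$, so the regret unwinds as
$$
R_n = \sum_{t=1}^n \EX[\mu_{a_t} - \mu_{a^*}] = \sum_{t=1}^n \sum_{a \in \A/a^*}(\mu_a - \mu_{a^*})\,\PP(a_t = a) \leq \Delta \sum_{t=1}^n \PP(a_t \in \A/a^*),
$$
using $\mu_a - \mu_{a^*} \leq \Delta$. The contribution of the rounds $t < m_0$ is at most $m_0\Delta = \mathcal{O}(1)$ and is absorbed into the constant transient term, so the real work is to sum the Lemma 1 bound (\ref{1-term-1}) over $t \geq m_0$.

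First I would handle the slowly decaying \emph{steady-state} contribution, i.e.\ the second summand in (\ref{1-term-1}). Each term $(t+1)^{-\beta_a}$ with $\beta_a := (\mu_a - \mu_{a^*})/\gamma \in (0,1)$ is decreasing, so a standard integral comparison gives $\sum_{t=m_0}^{n}(t+1)^{-\beta_a} \leq \int_{m_0}^{n+1} s^{-\beta_a}\,ds \leq \frac{(n+1)^{1-\beta_a}}{1-\beta_a}$; multiplying by $4\Delta g(a)/g(a^*)$ and summing over $a \neq a^*$ reproduces the second term of the theorem verbatim.

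The more delicate part is the transient term $2\exp(C_1)\,\exp(-C_2\, t^{\alpha})$, with $\alpha := 1 - \tau L/\gamma \in (0,1)$, $C_2 := R^\tau/(\tau(1-\tau L/\gamma))$, and $C_1$ the constant prefactor appearing in (\ref{term1'}). Since $e^{-C_2 s^\alpha}$ is decreasing, I would bound the sum by the integral $\int_0^\infty e^{-C_2 s^\alpha}\,ds$ and evaluate it via the substitution $u = C_2 s^\alpha$, which yields $\frac{1}{\alpha C_2^{1/\alpha}}\Gamma(1/\alpha) = \frac{\Gamma(1 + 1/\alpha)}{C_2^{1/\alpha}}$ after using $\Gamma(x+1)=x\Gamma(x)$. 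This is where the gamma function and the exponent $1/\alpha = (1 - \tau L/\gamma)^{-1}$ enter. To present the closed form with integer exponents as in the statement, I would use monotonicity of $\Gamma$ to replace $1/\alpha$ by $\lceil 1/\alpha\rceil$ in its argument and bound $C_2^{-1/\alpha}$ by $C_2^{-\lfloor 1/\alpha\rfloor}$. This last rounding is the step I expect to be the main obstacle: it is only a genuine upper bound when $C_2 \geq 1$, since for $C_2 < 1$ the map $x \mapsto C_2^{-x}$ is increasing and the direction of the inequality (hence the floor-versus-ceiling choice) reverses, so this case must be argued separately or absorbed into the regime where the bound is asserted.

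Finally, collecting the two summed contributions gives the displayed bound. Since the transient term is $\mathcal{O}(1)$ in $n$ while the steady-state term grows like $(n+1)^{1-\beta_a}$, the dominant behaviour is governed by the arm with the smallest gap $\mu_a - \mu_{a^*}$, yielding $R_n = \mathcal{O}\big(n^{1 - (\mu_a-\mu_{a^*})/\gamma}\big)$.
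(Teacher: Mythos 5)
Your proposal is correct and follows essentially the same route as the paper: the standard regret decomposition $R_n \leq \Delta\sum_{t}\PP(a_t \in \A/a^*)$, a direct integral comparison for the polynomially decaying steady-state term, and the substitution $u = C_2 s^\alpha$ turning the transient sum into $\Gamma(1+1/\alpha)/C_2^{1/\alpha}$. Your remark about the final rounding is a fair observation --- the paper states the constant with $\lfloor 1/\alpha\rfloor$ and $\lceil 1/\alpha\rceil$ without justifying the step $C_2^{-1/\alpha}\le C_2^{-\lfloor 1/\alpha\rfloor}$, which as you note holds only when $C_2\ge 1$ --- but this affects only the form of the constant prefactor, not the stated asymptotic rate.
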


\begin{proof}
Let $\Delta_i := \mu_i - \mu_{a^*}$. We recall the standard regret decomposition identity,
$$
R_n = \sum_{i=1}^k \Delta_i \EX[T_i(n)] = \sum_{i=1}^k \Delta_i \big(\sum_{t=1}^n \PP(a_t=i)\big).
$$ 
The regret can thus be bounded by (since $\Delta_{a^*}=0$)
$$
R_n \leq \Delta \sum_{t=1}^n P(a_t \in \A/\{a^*\}).
$$
The result is proved by using the previous lemma. The bound for the second term is obvious while the first term can be handled by noting that it can bounded by an integral of the form $\int_0^\infty e^{-cx^{a}} dx $ (with $a = 1 - \frac{\tau L}{\gamma}$ and $c =\frac{R^\tau}{\tau(1-\frac{\tau L}{\gamma})}$), which in turn can be bounded as: 
\begin{align*}
\int_0^\infty e^{-cx^{a}} dx &= \frac{1}{c^{\frac{1}{a}}a} \int_0^\infty u^{\frac{1}{a} -1}e^{-u}\,du \,\,\,\,\,\,\,\,\,(\text{set }cx^a = u)\\
&= \frac{1}{c^{\frac{1}{a}}a} \Gamma \Big(\frac{1}{a} \Big) \\
&= \frac{1}{c^{\frac{1}{a}}}  \Gamma\Big(1 + \frac{1}{a} \Big).
\end{align*}

\end{proof}

 \section{Noisy Simulated Annealing}
In this section we analyze SA with noisy observations in full generality. Accordingly, we only assume a connected graph (see assumption(ii)) in place of the fully connected setting of the previous section. The formal procedure is detailed in Algorithm 1.

\begin{algorithm}[htb]
\textbf{Input:} Arm set $\A = [k]$;  Graph $\mathcal{G} =\{\A, \,\E\}$;  Time horizon $T_{\text{hor}}$; Temperature $T_t$;\\
\textbf{Initialization:} $a_0 \in \A$, $\mu_{i} (0) = -\infty$ for all $i \in \A$.

%\textbf{(a)} Do an initial exploration phase for $t_0$ iterations to make sure that each arm $i$ has been sampled and its empirical mean updated atleast $T_i(t_0)$ times.\\
\For{ $t=0,\cdots,T_{\text{hor}}$  } { 
            \textbf{(a)} At time $n$, select $a_{n+1} $ according to:
  		   $$
  			 P_{a_n, a_{n+1} } = \begin{cases}
   					 0, & \text{if $a_{n+1} \notin \N(a_n)$}\\
   					(1-\epsilon_n)  \frac{g(a_{n+1}, a_n)}{g(a_n)} \exp\Big\{ \frac{- \big(\tilde{\mu}_{a_{n+1}}(n) - \tilde{\mu}_{a_n} 	(n) )^+\big)}{T_n} \Big\} +  \frac{g(a_{n+1}, a_n)}{g(a_n)} \epsilon_n, & \text{otherwise}.
 					 \end{cases}
			$$
           and
           $$
           P_{a_n,a_n} = 1 - \sum_{i \in \N(a_n)}  P_{a_n, i }. 
           $$
          \textbf{(b)} Pull $a_{n+1}$ to get estimate $\mu_{a_{n+1}}'$ and update  the empirical mean according to: $$\tilde{\mu}_{a_{n+1}} (n+1) := \Big(1- \frac{1}{T_{a_{n+1}}(n)} \Big)\tilde{\mu}_{a_{n+1}} (n) + \frac{1}{T_{a_{n+1}}(n)} \mu_{a_{n+1}}',  $$
          where $T_{a}(n): =$ total number of times arm $a$ has been pulled till $n$.  For $a \neq a_{n+1}$, keep  
           $$
           \tilde{\mu}_{a}(n+1) = \tilde{\mu}_{a}(n).
           $$
		 
		 \textbf{(c)} $T_{n+1} = \textbf{Update} (T_n)$.

 }
 
 \textbf{Output}: Resulting Policy: $\nu(n)$ \\

\caption{ Simulated Annealing with noisy estimates}
\end{algorithm}

We will specify the conditions on the sequence $\{\epsilon_n\}$, which acts as an additional noise to the system, later. In particular,  $\epsilon_n \to 0$ and is required to make sure that $T_i(n) \to \infty$ for all $i$ which in turn implies $\hat{\mu}_i(n) \to \mu_i$ w.p.1. Our analysis builds upon the framework established in \cite{strockbook} to study SA.

\subsection{Noise Perturbation Bounds}

To establish a regret bound, we first study how noisy observations affect the transition probability. To do this, we construct a related stochastic process $\{\As_n\}_{n\geq 1}$, undisturbed by any noise so that to select the next arm, we use the actual mean difference. To be more precise, we recall the definition of the probability matrix of noiseless SA:
\begin{equation*}
  			\Ap_{a, a' }(n):= \Ap (\bar{A}_{n+1} = a' | \bar{A}_{n}= a)   
  			 := \begin{cases}
   					 0, & \text{if $a' \notin \N(a),\,a' \neq a$}.\\
   					 \frac{g(a, a')}{g(a)} \exp\Big\{ -\frac{ (\mu_{a'}-\mu_{a} )^+}{T_n} \Big\} , & \text{otherwise},
 					 \end{cases}
			\end{equation*}
for any $a,\,a' \in \A$. Our aim is to bound the difference:
$$
\xi_n := \Ap (\bar{A}_{n+1} = a | \bar{A}_{n}= a') - P (A_{n+1} = a | A_{n}=a') . 
$$
Towards this end, we note that the transition probability for the noisy process, denoted by $\{A_n\}$, can be written as (when $A_{n+1} \neq A_n$):
$$
P_{a,a'}(n)= P (A_{n+1} = a | A_{n}=a') =  \frac{g(a, a')}{g(a)}\Bigg( \big(1 - \epsilon_n\big) \exp\Bigg\{ \frac{- (\mu_{a'}-\mu_{a}  + \lambda(n))^+}{T_n} \Bigg\} +\epsilon_n\Bigg), 
$$
where $\lambda(n) = \{( \tilde{\mu}_{a'}(n) -\mu_{a'})  -(\tilde{\mu}_{a}(n) -\mu_{a}) \} $ is sub-Gaussian with variance $\sigma^2(n) := \text{Var} (\lambda(n))$. It is quite straightforward to see that  $\sigma^2(n)  \leq \frac{2\sigma^2}{\min\{T_{a}(n),T_{a'}(n)\}}$. So, we have 
\begin{equation}\label{subgaus}
P(|\lambda(n)| \geq x ) \leq  2 \exp \Bigg( -\frac{ x^2 \min\{T_{a}(n),T_{a'}(n)\}}{ 4 \sigma^2 }\Bigg).
\end{equation}
We consider the following equivalent definition for $P_{a,a'} $ (see eq. (4), \cite{gelfand}): 
\begin{align*}
P_{a,a'}(n) &= \EX_{\lambda(n)} [P \big(A_{n+1} = a_{n+1} | A_{n}=a_n,\lambda(n)].
\end{align*}
Then, we have
\begin{align*}
\xi_{n}  & = \Ap (\bar{A}_{n+1} = a | \bar{A}_{n}= a') - P (A_{n+1} = a | A_{n}=a').  \\
&\leq   \exp\Bigg( \frac{ (\mu_{a'}-\mu_{a}  + \lambda(n))^+ -(\mu_{a'}-\mu_{a} )^+ }{T_n} \Bigg)  -1 +\epsilon_n \\
 &\leq  \underbrace{ \EX_{\lambda(n)} e^{\frac{ |\lambda(n)|}{T_n} }    - 1 }_{\xi'_n} +\epsilon_n
\end{align*}
In obtaining the last inequality, we  have used the easily verifiable relation: $(a+b)^+ - a^+  \leq |b|$ for any $a,b \in \mathbb{R}$. Setting $b = \lambda(n)$  and $a= \mu_{a_{n+1}} - \mu_{a_n} $ in this relation gives the required inequality. Denote  $\beta_n:= \frac{1}{T_n}$, so that
\begin{align*}
\xi'_{n}  & \leq   \EX_{\lambda(n)}  e^{  \beta_n \big| \lambda(n ) \big|}  - 1 ,
\end{align*}
Using the relation $\EX \,X = \int_{x \geq 0} \mathbb{P} (X \geq x ) dx$ for any non-negative random variable $X$, we have 
\begin{align*}
\xi'_{n}  & \leq   \int_0^1  \mathbb{P}\Big(  \beta_n| \lambda_n  |> \log x    \Big)\,dx  + \int_1^\infty  \mathbb{P}\Big(\beta_n   |\lambda_n|   > \log x \Big) dx  -1 \\
&=  \int_1^\infty  \mathbb{P}\Big( \beta_n |\lambda_n|   > \log x \Big) dx.
\end{align*}
Let $N_n := \min\{T_{a}(n),T_{a'}(n)\}$. Using the sub-Gaussian property of $\lambda_n$ (see eq. (\ref{subgaus})), we have
$$
\xi'_{n} \leq   \int_1^\infty e^{-\Big(  \frac{\log x \sqrt{N_n} }{2\sigma \beta_n}\Big)^2} dx.
$$ 
Using the transformation $s = \frac{\log x}{2\alpha(n)}$ with $\alpha(n) = \frac{\sigma \beta_n}{\sqrt{N_n}}$, we have
\begin{align*}
	\xi'_{n}  &\leq    2 \alpha(n) \int_0^\infty e^{2\alpha(n)s} e^{- s^2} ds.\\
		&\leq 2 \alpha(n) e^{\alpha(n)^2}\int_0^\infty e^{-\alpha(n)^2 +2\alpha(n)s - s^2}ds.\\
		&= 2 \alpha(n) e^{\alpha(n)^2}\int_0^\infty e^{-(s - \alpha(n))^2}ds.\\
\end{align*}
To finish, we note that,
$$
 \int_{-\infty}^\infty \exp(-u^2) \,du \leq \sqrt{\pi}.
$$
Thus we have :
\begin{align*}
\xi'_{n}  &\leq   2\sqrt{\pi}\alpha(n) e^{\alpha(n)^2}\\
\implies \xi_{n}  &\leq   2\sqrt{\pi}\alpha(n) e^{\alpha(n)^2} +\epsilon_n
\end{align*} 
We state this result in the following lemma:
\begin{lem}\label{lem-bound}
For any $a,\,a'\in \A$, we have
$$
  \bar{P}_{a,a'}(n) -P_{a,a'}(n)\leq 2 \sqrt{\pi}\alpha(n) e^{\alpha(n)^2} + \epsilon_n,
$$ 
where $\alpha(n):=  \frac{\sigma \beta_n}{\sqrt{N_n}}$ with $\sigma$ being the variance of the noise, $\beta_n := \frac{1}{T_n}$ the inverse temperature and $N_n := \min\{T_{a}(n),T_{a'}(n)\}$. 
\end{lem}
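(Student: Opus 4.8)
The plan is to bound the one-sided gap $\xi_n := \bar{P}_{a,a'}(n) - P_{a,a'}(n)$ by exploiting the common exponential structure of the two kernels. First I would note that both transition probabilities carry the prefactor $g(a,a')/g(a) \leq 1$, so upper bounding lets me discard it; what remains is to compare $\exp\{-(\mu_{a'}-\mu_a)^+/T_n\}$ with its noisy counterpart, in which the argument is perturbed to $(\mu_{a'}-\mu_a + \lambda(n))^+$ and the whole thing is mixed with weight $(1-\epsilon_n)$ against the flat term $\epsilon_n$. Writing the noisy kernel as $\EX_{\lambda(n)}$ of the conditional transition probability (as in eq.~(4) of \cite{gelfand}), and using $\exp(\cdot)\leq 1$ to absorb the flat term, reduces the problem to controlling the discrepancy of the two exponentials plus an additive $\epsilon_n$.

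Next I would condition on the realized noise $\lambda(n) = (\tilde\mu_{a'}(n)-\mu_{a'}) - (\tilde\mu_a(n)-\mu_a)$ and apply the elementary inequality $(a+b)^+ - a^+ \leq |b|$ with $a = \mu_{a'}-\mu_a$ and $b=\lambda(n)$. This shows the ratio of the two exponentials is at most $e^{|\lambda(n)|/T_n}$, so after taking expectations the entire estimate collapses to bounding the moment-generating-type quantity $\xi'_n := \EX_{\lambda(n)} e^{\beta_n|\lambda(n)|} - 1$, where $\beta_n = 1/T_n$.

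To estimate $\xi'_n$ I would use the layer-cake identity $\EX X = \int_0^\infty \mathbb{P}(X\geq x)\,dx$ applied to the nonnegative variable $e^{\beta_n|\lambda(n)|}$. The contribution from $x\in[0,1]$ integrates to $1$ and cancels the $-1$, leaving $\int_1^\infty \mathbb{P}(\beta_n|\lambda(n)| > \log x)\,dx$. At this point I would invoke the sub-Gaussian tail $\mathbb{P}(|\lambda(n)|\geq t) \leq 2\exp(-t^2 N_n/(4\sigma^2))$, which in turn rests on the variance bound $\sigma^2(n) \leq 2\sigma^2/N_n$ coming from the independence of the two empirical means with $N_n = \min\{T_a(n), T_{a'}(n)\}$. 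Substituting $s = \log x/(2\alpha(n))$ with $\alpha(n) = \sigma\beta_n/\sqrt{N_n}$ turns the integral into a shifted Gaussian; completing the square produces the prefactor $e^{\alpha(n)^2}$ and a residual integral $\int_{-\infty}^\infty e^{-u^2}\,du = \sqrt{\pi}$, giving $\xi'_n \leq 2\sqrt{\pi}\,\alpha(n)e^{\alpha(n)^2}$. Adding back $\epsilon_n$ yields the stated bound.

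The step I expect to be most delicate is the bookkeeping around the absolute value in the exponent: because $e^{\beta_n|\lambda(n)|}$ (rather than $e^{\beta_n\lambda(n)}$) appears, I must use the symmetric two-sided tail with its factor of $2$, and I must track the $e^{\alpha(n)^2}$ term correctly through the completion of the square so that it is not inadvertently dropped. The other point requiring care is confirming that the $(1-\epsilon_n)$ mixing weight can be cleanly separated so that the extraneous noise contributes only the additive $\epsilon_n$ rather than coupling into the exponential estimate.
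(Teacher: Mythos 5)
Your proposal follows essentially the same route as the paper's own proof: the expectation representation of the noisy kernel, the elementary inequality $(a+b)^+ - a^+ \leq |b|$ to reduce to the moment quantity $\EX e^{\beta_n|\lambda(n)|}-1$, the layer-cake identity with the $[0,1]$ contribution cancelling the $-1$, the sub-Gaussian tail with $\sigma^2(n)\leq 2\sigma^2/N_n$, and the substitution plus completion of the square yielding $2\sqrt{\pi}\,\alpha(n)e^{\alpha(n)^2}$. The argument is correct and no substantive step differs from the paper's.
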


%We will use this established bound to   $\sum_{t=0}^n |P_{ij}(t) - \bar{P}_{ij}(t)|$. We note that:
%$$
%\sum_{t=0}^n |P_{ij}(t) - \bar{P}_{ij}(t)| = \sum_{t=0}^n \mathcal{O} (\sigma(t)) 
%$$
%We recall that $T_a(t)$ denotes the number of visits to arm $a$ upto time $t$. So, the above can re-written as:
% $$
%\sum_{t=0}^n |P_{ij}(t) - \bar{P}_{ij}(t)| = \sum_{i=1}^k \sum_{t=1}^{T_{i}(t)} \mathcal{O} \Bigg( \frac{\sigma}{\sqrt{t}} \Bigg) 
%$$
%Before proceeding further, it is instructive to get an upper bound on the quantity on the RHS for later use. The upper would  satisfy
% $$
%\max_{\substack{ T_i(t) \\ 1 \leq i \leq k} }  \sum_{i=1}^k  \sum_{t=1}^{T_i(t)} \frac{1}{\sqrt{t} } \,\,\,\,\text{s.t.} \,\,\,\,\sum_{i=1}^k T_i(t) =t
%$$
%We note that $ \sum_{t=1}^{T_i(t)}\frac{1}{\sqrt{t} }   = \Theta(\sqrt{T_i(t)})$. Thus, we can equivalently find the required upper bound by maximizing
% $$
%\max_{\substack{ T_i(t) \\ 1 \leq i \leq k} }  \sum_{i=1}^k \sqrt{T_i(t)}  \,\,\,\,\text{s.t.} \,\,\,\,\sum_{i=1}^k T_i(t) =t
%$$
%It is clear the above will be maximized at $T_i(t) \in (\lfloor \frac{t}{|\E|} \rfloor, \lceil \frac{t}{|\E|}  \rceil), ,\,\forall \,i$.  This corresponds to the event each edge is visited with equal frequency and hence the noise seen by the algorithm is the most. On the other hand decreases the upper bound is minimized when $T_e(t) = t,$ for some edge $e$ and $T_{e'}(t) = 0,$ for all $e' \in \E/ \{e\}$. We will use these facts later.

\subsection{Construction of the Markov Process}

To establish the regret bound, it is convenient to switch to a continuous time analysis. This will allow us to use the concept of Dirichlet form which is central to the main arguments of the proofs. Before doing so, it is instructive to go through the standard construction of the Markov process since handling time inhomogeneity requires some care. We follow the construction of \cite{strockbook} here. Let $\{X(t)\,:\,t\in \mathbb{R}^+\}$ be the required Markov process having the transition mechanism $P(s,t)$ corresponding to the SA chain.  Let $\nu(0)$ be the initial distribution of the chain. So, we want to define a process $\{X(t)\,:\,t\in \mathbb{R}^+\}$ which satisfies:
\begin{equation}\label{1}
\Pm(X(0) = i ) = \nu_i(0) \,\,\,\,\text{}\,\,\,\,\, P(s,t)_{X(s)j} :=   \Pm(X(t)=j \,| \,X(\sigma),\,\sigma\in[0,s] ).
\end{equation}
A naive construction of $\{X(t)\}$ with the above transition mechanism would potentially mean dealing with an uncountable number of random variables  for each $(t,i) \in [0,\infty) \times \A $. To avoid this, we let $S(t,i,j) :=  \frac{1}{g(i)} $ $\sum_{m=1}^j g(i,m) e^{ - \beta(t) (\tilde{u}_m(t) -\tilde{u}_i(t)   )^+} $,where $\tilde{u}_i(t)$ is defined below. Define:
\begin{equation}
\Psi(t,i,u )= 
\begin{cases}
j  \,\,\,\,  \text{if},\,\,\,\,\,\,\,\,\, \frac{S(t,i,j-1)}{S(t,i,k)}\leq u < \frac{S(t,i,j)}{S(t,i,k)} \\
i  \,\,\,\, \text{if},\,\,\,\,\,\,\,\,\,\, u >1.
\end{cases}
\end{equation}
Also, we determine the function $\mathcal{T}$ by:
$$
\int_s^{s + \mathcal{T} (s,i,\xi)} S(\tau,i,k)^{-1}\,d\tau = \xi.
$$
Having defined the above functions, the rest of the construction is the same as that of a standard cadlag jump process: Let $X(0)$ be an $\A$-valued random variable with distribution $\nu(0)$ and $\{E_n : n > 1\}$ be a sequence of i.i.d.\ mean $1$ exponential random variables. Furthermore, let $\{U_n : n > 1\}$ be a sequence of i.i.d.\ random variables uniformly distributed on $[0,1)$ and also independent of the sigma algebra $ \sigma( X(0) \cup \{E_n : n > 1\})$. Finally, set $J_0 = 0$ and $X(0) = X_0$, and, when $n > 1$, we inductively define the following
$$
J_n - J_{n-1} = \mathcal{T} (J_{n-1},X(J_{n-1}),  E_n), \,\,\,\, X(J_n) =  \Psi(J_n , X(J_{n-1}),U_n),
$$
$$
X(t) := X(J_{n-1}), \,\, \text{for} \, \, J_{n-1} \leq t<J_n,
$$
$$
\tilde{u}_{i}(t) := \frac{1}{T_i(t)} \sum_{m =1}^n \mathbb{I} \Big\{X(t') =i,\,\, t'\in[J_{m-1},J_{m}) \Big\} \tilde{\mu}_i \,\,\text{for} \, \, J_{n-1} \leq t<J_n,\,\,\,i\in\A,
$$
where $\EX \, \tilde{\mu}_i = \mu_i$  and $T_i(t) :=  \sum_{m =1}^n \mathbb{I} \{X(t') =i,\,\, t'\in[J_{m-1},J_{m}) \} $ is the number of visits to arm $i$ till time $t \in [J_{n-1},J_n)$. A routine argument can show that for the above process, (\ref{1}) holds. The $Q$-matrix for the SA process with a continuous time temperature parameter $T_t : \mathbb{R} \to \mathbb{R}$ can be defined as:
\begin{equation}\label{q-matrixdef}
Q_{ij}(t) :=
\begin{cases}
\frac{g(i,j)}{g(i)}\Big( \big(1-\epsilon(t)\big)\exp\Big( - \beta(t)(\tilde{\mu}_j(t) - \tilde{\mu}_i(t))^+\Big) + \epsilon(t) \Big)\,\, \text{ for }\,\, j \neq i\\
-\sum_{i \neq j} Q_{ij}(t)
\end{cases}
\end{equation} 
We note that our $Q$-matrix is time dependent which potentially poses a problem towards deploying the standard Markov machinery (more precisely, the Kolmogorov forward equation). We therefore show that the forward equation holds almost everywhere because $Q(t)$ is continuous almost everywhere. We will prove the latter in what follows, i.e., we prove
\begin{equation}\label{q-matrix}
\frac{d}{dt} P(s,t) = P(s,t) Q(t) \,\,\text{almost surely on} (s,\infty) \,\,\,\text{with }P(s,s)=I.
\end{equation}
To prove (\ref{q-matrix}), we use an approximation argument. Accordingly, let
$$
Q^N(t) := Q([t]_N) \,\,\,\text{ where }[t]_N = \frac{J_n}{N} \text{ for } t\in \Bigg[\frac{J_n}{N},\frac{J_{n+1}}{N}\Bigg)\,\,\,\text{for }t>s
$$
It is obvious, the solution to (\ref{q-matrix}) with $Q(t)$ replaced with $Q^N(t)$, is given by
$$
P^N(s,t) = P^N (s,s\vee [t]_N )e^{(t-s)Q([t]_N)}.
$$
We note that :
\begin{multline*}
\| P^N(s,t)  -P^M(s,t) \| \leq \int_s^t \| Q([\tau]_N)-Q([\tau]_M)  \| \,\|P^N(s,\tau) \| d\tau +  \int_s^t \|Q([\tau]_M) \|\, \|  P^N(s,\tau) -P^M(s,\tau)  \|  d\tau .
\end{multline*}
Let $G$ denote the matrix with entries $g_{ij}:= \frac{g(i,j)}{\sum_{j\in \N(A)} g(i,j)}$ and $g_{ii}=0$. 
%for $ \frac{J_n}{N} \leq t \leq\frac{ J_{n+1}}{N}  $, $ \frac{J_{n'}}{M} \leq t' \leq \frac{J_{n'+1}}{M}   $ and $n,n' \geq 1$. 
So, the above inequality gives
\begin{equation*}
\| P^N(s,t)  -P^M(s,t) \| \leq \int_s^t \| Q([\tau]_N)-Q([\tau]_M)  \| d\tau + 
 \|G\|\int_s^t \|  P^N(s,\tau) -P^M(s,\tau)  \|  d\tau. 
\end{equation*}
An application of Gronwall's lemma gives
\begin{equation*}
\sup_{0\leq s\leq t\leq T}\| P^N(s,t)  -P^M(s,t) \|  =\mathcal{O} \Big( e^{GT}  \int_0^T \| Q([\tau]_N)-Q([\tau]_M)  \|  d\tau \Big).
\end{equation*}
We note that as $N,M \to \infty$, then $|\beta([\tau]_N) - \beta([\tau]_M)|\to 0$ implying that $ \| Q([\tau]_N)-Q([\tau]_M)  \|   \to 0$. So the above equation guarantees that  $P^N(s,t)$ converges to  some $(s,t) \to P(s,t)$ on finite intervals. In particular, this implies that
\begin{equation}\label{diff-2}
P(s,t) = I + \int_s^t P(s,\tau)Q(\tau) d\tau
\end{equation}
which is the integrated form of (\ref{q-matrix}). It can then be seen by routine arguments that
\begin{equation*}\label{diff-1}
\dot{\nu}(t) = \nu(t)Q(t)\,\,\text{ for }\,\,t\in [s,\infty) \iff \nu(t) = \nu(s)P(s,t) \,\,\text{ for }\,\,t\in [s,\infty). 
\end{equation*}
for $P(s,t)$ satisfying (\ref{diff-2}). This completes the construction of the process $\{X(t)\}$. 
\subsection{Regret Bound }
To begin establishing the regret bound, we briefly recall the details of some technical concepts related to Markov processes. Let $f$ denote a column vector determined by any function $f:\A \to \mathbb{R}^{k}$. Also, let $\pi(t)$ denote the stationary distribution of $P(t)$, determined by the continuous cooling schedule $T(t)=\frac{\gamma}{\log(t+t_0)}$, where $t\in \mathbb{R}$. We will use the following notation throughout this section:
$$
\|f \|_{\pi}  = \sqrt{\langle|f|^2 \rangle_{\pi}} \,\,\text{ where }\,\, \langle g \rangle_{\pi} := \sum_{i \in \A} g_i \pi_i. 
$$
Since $\pi_i(t) >0$ for each $i \in \A$, we note that $\|\cdot \|_{\pi} $ is a complete norm. The inner product between any two functions $f,g : \A \to \mathbb{R}^{k}$ is denoted by  $\langle f,g\rangle_{\pi}:= \sum_{i\in \A} \pi_i f_ig_i$\\

\textbf{Definition 5} (\textit{Dirichlet Forms and Poincar\'{e} inequality}) Let $L^2(\pi)$ denote the space of $f$ for which $\|f\|_{\pi} <\infty $. This space is actually a Hilbert space for which the transition matrix $P$ acts as a self adjoint contraction assuming the detailed balance equations are satisfied. The variance of $f$ is then defined to be:
$$
\text{Var}_{\pi}(f) := \|f- \langle f \rangle_{\pi}\|_{\pi}^2
$$
The Dirichlet form is defined as:
\begin{equation}\label{d-form}
\E_t(f,f) := \frac{1}{2} \sum_{\substack{i\in \A\\ j \neq i}} \pi_i Q_{ij}(t) (f_j-f_i)^2.
\end{equation}
Using the Dirichlet form, the Poincar\'{e}  constant (assuming fixed $t$) can be defined as
$$
\xi_+ = \inf \{\E_t(f,f)\,:\, f \in L^2(\pi) \text{ and } \text{Var}_{\pi} (f) =1 \}
$$
which gives us the well known Poincar\'{e}  inequality:
$$
\xi_+ \text{Var}_{\pi} (f) \leq \E_t(f,f),\,\,f\in L^2(\pi).
$$
For our purposes, we need the definition of $\xi_+$ when $\pi:=\pi(t)$. We denote this by
$$
\lambda(t) := \inf\{ \E_t(f,f)  :\,\,\text{Var}_{\pi(t)}(f) =1\}.
$$

The upper (and lower) bound on  $\lambda (t)$ has been provided in (Theorem 5.4.11, \cite{strockbook}): 
\begin{equation}\label{dirtc}
 \pi_{-}e^{-\beta(t)\gamma^*}\leq \lambda(t) \leq \pi_+e^{-\beta(t)\gamma^*},
\end{equation}
where $ \pi_{-},  \pi_+$ are bounded constants, $\gamma^*$ is the critical depth and $\beta(t):=\frac{1}{T_t}$ is the inverse temperature.
\[ \]
We use  the noise perturbation bound derived in the previous section to establish related upper bounds for the $Q$-matrix perturbation with respect to the noiseless process.  Accordingly, let
\begin{equation}\label{sigma-eq}
\sigma_{ij}(t) :=   \bar{Q}_{ij}(t) -Q_{ij}(t) 
\end{equation}
where $\bar{Q}(t) := Q$-matrix corresponding to the continuous time version of the undisturbed process $\{a_n\}$. Then, by Lemma \ref{lem-bound}, we have
$$
  \sigma_{ij}(t)    \leq 2 \sqrt{\pi}\alpha(t) e^{\alpha(t)^2} +\epsilon(t),
$$
where $\alpha(t) := \frac{\sigma \beta(t)}{\sqrt{ \min\{T_{i}(t),T_{j}(t)\}}}$, $\beta(t) = \frac{\log (t+1)}{\gamma}$ and $\epsilon(t) = \frac{1}{t^\frac{1-\epsilon}{d}}$. Assume without any loss of generality that $\frac{\log t}{t^{\frac{\epsilon}{4}}} \leq 1$. We have $\alpha(t) \leq \frac{\sigma \beta(t)}{\sqrt{\bar{T}(t)}}$, where $\bar{T}(t) := \min_{a\in \A }T_a(t)$. Set $\alpha(t) := c \beta(t)$, where $c:=\frac{\sigma }{\sqrt{\bar{T}(t)}}$. Suppose $T_a(t) := \mathcal{O} (t^\epsilon)$ for any $\epsilon>0$ and $a\in \mathcal{A}$.  Then,  we have
\begin{equation}\label{sigma-bd}
 \sigma_{ij}(t)    \leq   \frac{2 \sqrt{\pi} \sigma e^{ \frac{\sigma^2 (\log t)^2}{\gamma^2 t^\epsilon}}\log t}{\gamma t^{\frac{\epsilon}{2}} } + \epsilon(t) ,
\end{equation}
So, we have
\begin{align}\label{eps-bound}
 \int_{0}^t \sigma(\tau) d\tau &\leq \int_{0}^{t} \big(2 \sqrt{\pi}c\beta(\tau) e^{(c\beta(\tau))^2}  + \epsilon(\tau) \big) d\tau \nonumber \\
&\leq   \Bigg(\frac{2 \sqrt{\pi} \sigma e^{ \frac{\sigma^2}{\gamma^2}} \log t}{\gamma (1- \frac{\epsilon}{2}) t^{ \frac{\epsilon}{4}}} + \frac{1}{ (1-\frac{1-\epsilon}{d})t^{ \frac{1-\epsilon}{d} - \frac{\epsilon}{4}}} \Bigg)t^{1 -\frac{\epsilon}{4}} \nonumber\\ 
&:=Mt^{1 -\frac{\epsilon}{4}} .
\end{align}
where $M < \infty$ is a finite constant independent of $t$ if $\epsilon \in (0,\frac{4}{d+4} )$. We note that in proving the above bound we have assumed that $T_i(t) := \mathcal{O} (t^\epsilon)$. This can be easily guaranteed in expectation by the assumed conditions on $\epsilon(t)$. We have 
$$ \EX\, T_i (n) := \sum_{p=0}^{n-1} \EX \,\mathbb{I}\{a_{p+1} = i\}  = \sum_{p=0}^{n-1} P( a_{p+1} = i) $$
Since $\epsilon_n$ assigns a mass of at least $\frac{1}{k}$ to $i$ when $i \in \mathcal{N}(a_p)$ and $0$ otherwise, we have
$$
 \sum_{p=0}^{n-1}  P( a_{p+1} = i )
\geq   \frac{1}{k} \sum_{j \in \N(i)}  \sum_{p=1}^{n-1} \epsilon_p \mathbb{I}  \{ a_p = j\},
$$
$$\implies  \EX \,T_i (n) \geq  \frac{1}{k} \sum_{j \in \N(i)}  \sum_{p=1}^{n-1}  \epsilon_p  \mathbb{I}  \{ a_p = j\}
$$
By repeating the above calculation, we get (since $\epsilon_{p-1} > \epsilon_p$), 
$$\implies  \EX\, T_i (n) \geq  \frac{1}{k^2}\sum_{k \in \N(j)}  \sum_{j \in \N(i)}  \sum_{p=2}^n  \epsilon^2_{p} \mathbb{I}  \{ a_{p-1} = k\}
$$
Let $d = \max_{a \in \A} D_a$, where $D_a$ is defined as:
\begin{equation}\label{qt}
D_a = \min_{k} \Big\{ (a, \mathcal{N}(a)); \cup_{a_1 \in \N(a)} (a_1, \N(a_1)),\cdots, \cup_{a_k \in \N(a_{k-1})} (a_k, \N(a_{k}))  \Big\},\,\,\,\text{such that} \,\,\,A \subseteq D_a
\end{equation}
Using the above definition, it is obvious that
$$\implies  \EX\, T_i (n) \geq  \frac{1}{k^d}  \sum_{p=d}^n \epsilon^d_p
$$
which proves that $\EX \,T_i (n)   =  \mathcal{O} (n^\epsilon )$. On a side node, an application of Borel-Cantelli Lemma also shows $\sum_{p=0}^n \mathbb{I}\{a_{p+1} = i\} \to \infty $ as $n \to \infty$. Thus, from law of large numbers $\hat{\mu_i} \to \mu_i$.
\begin{rem}
If we take $\epsilon_n=0$, then one can instead execute a simple depth first search algorithm (which has a worst case complexity of $\mathcal{O}(k + e )$, with $e:=|\E|$ denoting the number of edges) to allocate a prespecified $T_i(t_0):=  =  \mathcal{O} (T_{\text{hor}}^\epsilon) $ budget to each arm. Another possible strategy is to just initialize the empirical mean to $-\infty$ and freeze the temperature for a certain pre-determined time duration. Since our graph is connected, we are guaranteed to visit all arms and allocate the preliminary budget to them. 
\end{rem}
\noindent For reference, we state the previous results in the following lemma:
\begin{lem}\label{newlem}
Suppose  $\epsilon(t) = \frac{1}{t^\epsilon}$ with $\epsilon \in \Big(0,\frac{4}{d+4} \Big)$, where $d$ is defined in (\ref{qt}) and $T_a(t)= \mathcal{O}(t^\epsilon)$ for all $a \in \A$. Then, we have 
\begin{align*}
 \int_{0}^t \sigma(\tau) d\tau &\leq  Mt^{1 -\frac{\epsilon}{4}},
\end{align*}
where,
$$
M \leq \Bigg(\frac{2 \sqrt{\pi} \sigma e^{ \frac{\sigma^2}{\gamma^2}} }{\gamma (1- \frac{\epsilon}{2}) } + \frac{1}{ (1-\frac{1-\epsilon}{d})} \Bigg)
$$
\end{lem}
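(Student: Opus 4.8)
The plan is to obtain the integral estimate directly from the pointwise perturbation bound of Lemma~\ref{lem-bound} by substituting the prescribed cooling and budget schedules and then integrating term by term. First I would invoke Lemma~\ref{lem-bound} to write $\sigma_{ij}(t)\le 2\sqrt{\pi}\,\alpha(t)e^{\alpha(t)^2}+\epsilon(t)$ with $\alpha(t)=\sigma\beta(t)/\sqrt{N_t}$ and $N_t=\min\{T_i(t),T_j(t)\}$. Since what is needed in the denominator is a lower bound on the number of samples, I would replace $N_t$ by the uniform quantity $\bar T(t):=\min_a T_a(t)$, which under the hypothesis that each budget $T_a(t)$ is of order $t^{\epsilon}$ satisfies $\bar T(t)\ge c\,t^{\epsilon}$ for some $c>0$. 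Combining this with $\beta(t)=\log(t+1)/\gamma$ gives $\alpha(t)\le \sigma\log t/(\gamma t^{\epsilon/2})$ and hence $\alpha(t)^2\le \sigma^2(\log t)^2/(\gamma^2 t^{\epsilon})$, which reproduces the pointwise estimate (\ref{sigma-bd}).

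The crucial step, and the one I expect to be the main obstacle, is controlling the exponential factor $e^{\alpha(t)^2}$: at face value its exponent looks like it could grow, and the whole bound hinges on the comparison of the polynomial $t^{\epsilon}$ against $(\log t)^2$. Invoking the standing normalization $\log t/t^{\epsilon/4}\le 1$ yields $(\log\tau)^2/\tau^{\epsilon}\le \tau^{-\epsilon/2}\le 1$, so that $\alpha(\tau)^2\le \sigma^2/\gamma^2$ uniformly in $\tau$ and therefore $e^{\alpha(\tau)^2}\le e^{\sigma^2/\gamma^2}$, a constant independent of $\tau$. With the exponential thus frozen, the first summand is dominated by $\tfrac{2\sqrt{\pi}\sigma e^{\sigma^2/\gamma^2}}{\gamma}\cdot\tfrac{\log\tau}{\tau^{\epsilon/2}}$; integrating by pulling out $\log t$ and using $\int_0^t\tau^{-\epsilon/2}\,d\tau$ produces the contribution $\tfrac{2\sqrt{\pi}\sigma e^{\sigma^2/\gamma^2}\log t}{\gamma(1-\epsilon/2)}\,t^{1-\epsilon/2}$.

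Next I would integrate the additive term $\int_0^t\epsilon(\tau)\,d\tau$, a plain power-law integral giving a term proportional to $t^{1-(1-\epsilon)/d}$. I would then factor the common growth rate $t^{1-\epsilon/4}$ out of both contributions, rewriting the two residual exponents as $\log t/t^{\epsilon/4}$ and $t^{-((1-\epsilon)/d-\epsilon/4)}$ respectively, which recovers the bracketed form (\ref{eps-bound}). The final step is to verify that the prefactor $M$ is finite and independent of $t$: the first bracketed factor is at most $\tfrac{2\sqrt{\pi}\sigma e^{\sigma^2/\gamma^2}}{\gamma(1-\epsilon/2)}$ by the same normalization $\log t/t^{\epsilon/4}\le 1$, while the second stays bounded exactly when the exponent $(1-\epsilon)/d-\epsilon/4$ is strictly positive, i.e. $4(1-\epsilon)>d\epsilon$, equivalently $\epsilon<\tfrac{4}{d+4}$. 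This is precisely the admissible range assumed in the statement, so under that constraint both residual factors are at most $1$ for large $t$, and summing the two constants yields the claimed bound on $M$.
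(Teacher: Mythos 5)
Your proposal follows essentially the same route as the paper: it starts from Lemma~\ref{lem-bound}, replaces $N_t$ by $\bar T(t)=\min_a T_a(t)=\mathcal{O}(t^{\epsilon})$, freezes the exponential via the normalization $\log t / t^{\epsilon/4}\le 1$ so that $e^{\alpha(t)^2}\le e^{\sigma^2/\gamma^2}$, integrates the two summands separately, and factors out $t^{1-\epsilon/4}$ to read off $M$, with the same verification that $\epsilon<\tfrac{4}{d+4}$ keeps both residual factors bounded. The argument and the resulting constant coincide with the paper's derivation of (\ref{eps-bound}), so there is nothing further to add.
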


Just as in the noiseless case, in order to establish a regret bound, we first bound the probability of the event $X(t) \in \A/\{a^*\}$. 

\begin{lem}\label{problem}
Let $\beta(t) = \frac{\log \big(1 + t\big)}{\gamma}$ and $\epsilon(t) = \frac{1}{t^{\epsilon}}$ where $\epsilon \in (0,4(d+4)^{-1})$. Then, for $\gamma >\frac{4\gamma^*}{\epsilon}$, we have for any $t > 0 $, 
$$
P\Big(X(t) \in \A/\{a^*\}\Big) \leq  \sqrt{ \frac{2g(k + 4)}{(1+ t)^{ \frac{\Delta_{\text{min}}}{\gamma}}}},
$$
where $g: = 2\sum_{i\in \A/a^*} \frac{g(a)}{g(a^*)}$, $\Delta_{\text{min}} : = \min_{i \in \A} (\mu_{a_i} - \mu_{a^*})$ and $k$ is the number of arms. 
%and $a$ is defined as
%$$
%a = \frac{2 \sqrt{\pi} \sigma e^{(c\beta(t_0))^2} }{\pi_- \gamma}
%$$ 
\end{lem}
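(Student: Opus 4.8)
The plan is to track the $\chi^2$-divergence between the law $\nu(t)$ of $X(t)$ and the (time-varying) quasi-stationary Gibbs measure $\pi(t)$, show it stays bounded uniformly in $t$ despite the noise, and then convert this into the stated rate by a single Cauchy--Schwarz pairing against the Gibbs mass on the suboptimal arms. Concretely, I would introduce the relative density $h_t(i):=\nu_i(t)/\pi_i(t)$ and the functional
$$
D(t) := \mathrm{Var}_{\pi(t)}(h_t) = \sum_{i\in\A}\frac{\big(\nu_i(t)-\pi_i(t)\big)^2}{\pi_i(t)} = \big\langle h_t,h_t\big\rangle_{\pi(t)} - 1,
$$
and differentiate it along the flow $\dot\nu(t)=\nu(t)Q(t)$ established in \eqref{diff-2}. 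This produces three groups of terms: a dissipative term from the Markov generator, an error term from the drift $\dot\pi(t)$ of the reference measure, and a perturbation term arising because the true generator is $Q(t)=\bar{Q}(t)-\sigma(t)$ rather than the reversible noiseless generator $\bar{Q}(t)$.

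For the reversible part I would use the detailed balance of $\bar{Q}(t)$ with respect to $\pi(t)$ to rewrite $\big\langle h_t,\bar{Q}(t)h_t\big\rangle_{\pi(t)}=-\E_t(h_t,h_t)$ and then invoke the Poincar\'e inequality with the spectral-gap bound \eqref{dirtc}, yielding the contraction $-2\lambda(t)D(t)$ with $\lambda(t)\ge \pi_-(1+t)^{-\gamma^*/\gamma}$. The $\dot\pi(t)$-term is controlled using the explicit monotonicity and decay of $\pi_a(t)$ already computed in the proof of Lemma~1 (the expression for $\dot\pi_a(T)$), and the $\sigma(t)$-term is controlled by Lemma~\ref{newlem}, whose integrated bound $\int_0^t\sigma(\tau)\,d\tau\le M t^{1-\epsilon/4}$ shows that the instantaneous perturbation decays like $t^{-\epsilon/4}$. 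Collecting these gives a scalar differential inequality of the form $\dot D(t)\le -2\lambda(t)D(t)+E(t)$ with $E(t)=\mathcal{O}(t^{-\epsilon/4})$.

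I would then solve this inequality with the integrating factor $\exp\!\big(2\int_0^t\lambda(s)\,ds\big)$. Since $\int_0^t\lambda(s)\,ds$ grows like $(1+t)^{1-\gamma^*/\gamma}$, the hypothesis $\gamma>4\gamma^*/\epsilon$ (equivalently $\gamma^*/\gamma<\epsilon/4$) forces the quasi-equilibrium level $E(t)/\lambda(t)=\mathcal{O}(t^{\gamma^*/\gamma-\epsilon/4})$ to remain bounded, so that $D(t)+1\le 4(k+4)$ uniformly in $t$. To finish, I would write $P(X(t)\in\A/\{a^*\})=\sum_{i\neq a^*}\nu_i(t)=\sum_{i\neq a^*}h_t(i)\pi_i(t)$ and apply Cauchy--Schwarz in $L^2(\pi(t))$,
$$
P\big(X(t)\in\A/\{a^*\}\big)\le \sqrt{\big(D(t)+1\big)\sum_{i\neq a^*}\pi_i(t)},
$$
bounding $\sum_{i\neq a^*}\pi_i(t)\le \tfrac{g}{2}(1+t)^{-\Delta_{\text{min}}/\gamma}$ by exactly the Term~II estimate \eqref{term2}; multiplying the two factors recovers the claimed $\sqrt{2g(k+4)}\,(1+t)^{-\Delta_{\text{min}}/(2\gamma)}$ bound.

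The hard part will be the perturbation term: because $\pi_i(t)$ sits in the denominator of $D(t)$ and decays exponentially for suboptimal $i$, the naive bound on $\sum_j \nu_j(\nu\sigma)_j/\pi_j$ blows up. One must exploit that $\sigma(t)$ has zero row sums (both $Q(t)$ and $\bar{Q}(t)$ are generators), pairing it against $h_t$ through a Cauchy--Schwarz/Young splitting that absorbs part of the cross term into the dissipation $\lambda(t)D(t)$ and leaves only a benign remainder of order $t^{-\epsilon/4}$. Making this absorption quantitative, and verifying that the remainder is dominated by the spectral gap precisely when $\gamma>4\gamma^*/\epsilon$, is the crux of the argument.
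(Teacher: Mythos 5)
Your proposal follows essentially the same route as the paper's proof: both track the $L^2(\pi(t))$ norm of the density $f=d\nu/d\pi$ (your $D(t)+1$), combine the two expressions for its time derivative to eliminate the $\langle f,\dot f\rangle_{\pi(t)}$ term, invoke the Poincar\'e inequality with the spectral-gap bound (\ref{dirtc}), control the noise perturbation via Lemma \ref{newlem}, solve the resulting scalar differential inequality with the integrating factor $e^{\rho(t)}$ under the condition $\gamma>4\gamma^*/\epsilon$, and conclude with the identical Cauchy--Schwarz pairing against the Gibbs tail estimate (\ref{term2}). The only real difference is bookkeeping: the paper folds the perturbation multiplicatively into the decay rate as $-(\lambda(t)-\sigma(t))\|f\|^2_{\pi(t)}$ via the direct bound $\bar{\mathcal{E}}_t(f,f)-\mathcal{E}_t(f,f)\le k\sigma(t)\|f\|^2_{\pi(t)}$, rather than carrying an additive error $E(t)$ and performing the zero-row-sum/Young absorption you flag as the crux.
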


\begin{proof}
Let $\nu(t)=\nu(0)P(0,t)$ for any $t\geq 0$. We denote the Radon-Nikodym derivative of $\nu(t)$ w.r.t $\pi(t)$  by:
$$
f_i(t) = \frac{\nu_i(t)}{\pi_i(t)},\,\,\forall \,t\geq 0,\,i \in \A.
$$
To get an estimate on the probability of selecting a bad arm, we note that
\begin{align*}
P\Big(X(t) \in \A/\{a^*\} \Big) &=  \sum_{i \in \A / \{a^*\}}  \nu_i(t)  \\
& = \sum_{i \in \A / \{a^*\}} \pi_i(t) f_i(t)\\
& = \langle f(t),\mathbb{I}\{i \in \A/a^* \} \rangle_{\pi(t)}\\
&\leq \|f(t)\|_{\pi(t)} \sqrt{\sum_{ i \in \A/\{a^*\}} \pi_i(t)} .
\end{align*}
Suppose that $\|f(t)\|_{\pi(t)} \leq C$ for some constant $C>0$, then we would have
\begin{align*}
P(X(t) \in \A/\{a^*\}) &\leq \frac{Cg}{(1+t)^{ \frac{\Delta_{\text{min}}}{2\gamma}}},
\end{align*}
where we have used the continuous version of the previously established estimate of the decay rate of the Gibbs measure to the Dirac measure on $a^*$  (see (\ref{term2})). The rest of the proof bounds $\|f(t)\|_{\pi(t)}$ by providing an upper bound on the constant $C$. \\

Let $Z(t) : = \sum_{a \in \A} g(a) e^{-\beta(t)\mu_a }$ denote the partition function, so that $\pi_a(t) = g(a) e^{-\beta(t)\mu_a }/Z(t)$. Then, we have  by the easily verifiable relation $\dot{Z}(t) = -\dot{\beta}(t) Z(t) \langle \mu\rangle_{\pi(t)} $, that
\begin{align}\label{est-1}
	\frac{d}{dt} \|f(t)\|^2_{\pi(t)} &=  	\frac{d}{dt} \Bigg( \frac{1}{Z(t)} \sum_{i \in \A} (f_i(t))^2 e^{-\beta(t) \mu_i} \Bigg) \nonumber \\
	&=  \frac{2}{Z(t)} \sum_{i \in \A} f_i(t) \dot{f}_i(t) e^{-\beta(t) \mu_i}  -  \frac{\dot{\beta}(t) }{Z(t)}\sum_{i \in \A} \mu_i f_i^2(t)e^{-\beta(t) \mu_i} \nonumber\\
	& \,\,\,\,\,\,\,\,\,\,\,\,\,\,\,   -   \frac{1}{Z^2(t)}\sum_{i \in \A} \dot{Z}(t)  f_i^2(t) e^{-\beta(t) \mu_i}  \nonumber \\
	&= 2\langle f(t),\dot{f}(t) \rangle_{\pi(t)} -\dot{\beta}(t)  \langle\mu -\langle \mu \rangle_{\pi(t)},f^2(t) \rangle_{\pi(t)}.
\end{align}
We can establish a different  upper bound on $\frac{d}{dt} \|f(t)\|^2_{\pi(t)} $ by noting that 
\begin{align*}
\|f(t)\|^2_{\pi(t)} &= \langle f(t)\rangle_{\nu(t)}\\ 
&= \sum_{j \in \A} \bigg( \sum_{i \in \A}\nu_i(0)P_{ij}(0,t)\bigg) f_j(t) \\
&= \sum_{i \in \A} \nu_i(0) \bigg( \sum_{j \in \A} P_{ij}(0,t)f_j(t)\bigg)  \\
&=  \langle P(0,t)f(t)\rangle_{\nu(0)}.
\end{align*}
Differentiating the above equality:
\begin{align}\label{est-2}
\frac{d}{dt} \|f\|_{\pi(t)}^2 &= \langle P(0,t)Q(t) f(t)  \rangle_{\nu(0)} + \langle P(0,t) \dot{f}(t)\rangle_{\nu(0)} \nonumber \\
&= \Bigg( \sum_{i \in \A}  \Big(\sum_{p \in \A}\nu_p(0)P_{pi}(0,t)\Big) \Big(\sum_{j \in \A} q_{ij}(t)f_j(t) \Big)\Bigg) + \sum_{i \in \A} \Big( \sum_{p \in \A}\nu_p(0)P_{pi}(0,t)\Big) \dot{f}_i(t) \nonumber \\
&= \sum_{i \in \A}  f_i(t) \pi_i(t)\Big( \sum_{j \in \A} q_{ij}(t)f_j(t) \Big) + \sum_{i \in \A} f_i(t) \pi_i(t) \dot{f}_i(t) \nonumber \\
&= \sum_{i \in \A}   \pi_i(t)\Big( -\sum_{j \in \A/\{i\}} q_{ij}(t)f^2_i(t) +\sum_{j \in \A/\{i\}} q_{ij}(t)f_i(t)\ f_j(t) \Big) +  \langle f(t),\dot{f}(t) \rangle_{\pi(t)}  \nonumber \\
&=  -\mathcal{E}_t(f(t),f(t))   + \langle f(t),\dot{f}(t) \rangle_{\pi(t)} .
\end{align} 
where we have used (\ref{q-matrix}) in the first term of the first equation, (\ref{q-matrixdef}) in the fourth equation and the fact that $\pi_i(t)q_{ij}(t) = \pi_j(t)q_{ji}(t) $ along with the definition of (\ref{d-form}) in deriving the last equation. 
Define $ \bar{\mathcal{E}}_t(f(t),f(t))$ as
$$ \bar{\mathcal{E}}_t(f(t),f(t)):=  \frac{1}{2}  \sum_{\substack{i\in \A\\ j \neq i}}  \pi_i(t) \bar{Q}_{ij}(t)   \big( f(j)  - f(i)\big)^2.$$ 
Combining estimates (\ref{est-1}) and (\ref{est-2}) to remove the $\langle f(t),\dot{f}(t) \rangle $ term, we have:
\begin{align}\label{jet}
	\frac{d}{dt}\|f(t) \|^2_{\pi(t)} & = -2\mathcal{E}_t(f(t),f(t)) + \dot{\beta}(t)\langle \mu - \langle \mu \rangle_{\pi(t)}    , f^2(t)\rangle_{\pi(t)}  \nonumber \\
&= -2\bar{\mathcal{E}}_t(f(t),f(t)) + \dot{\beta}(t) \langle \mu - \langle\mu \rangle_{\pi(t)}  , f^2(t) \rangle_{\pi(t)} +  2\bar{\mathcal{E}}_t(f(t),f(t)) - 2\mathcal{E}_t(f(t),f(t)) , \nonumber \\
&\leq -2\lambda(t) \|f(t)\|^2_{\pi(t)} +2\lambda(t) +   \dot{\beta}(t)\mu_{\text{max}}\|f^2(t) \|_{\pi(t)} +  2\bar{\mathcal{E}}_t(f(t),f(t)) - 2\mathcal{E}_t(f(t),f(t)) \nonumber \\
&\leq -\lambda(t) \|f(t)\|^2_{\pi(t)} +2\lambda(t) +  2\bar{\mathcal{E}}_t(f(t),f(t)) - 2\mathcal{E}_t(f(t),f(t)) ,
\end{align}
where we have used the facts: (i) $\bar{\mathcal{E}}_t(f(t),f(t)) \geq \text{Var}_{\pi(t)}(f(t))\lambda(t)=(\|f\|^2_{\pi(t)} -1) \lambda(t) $ in the first term in the third inequality, and, (ii) $ \mu_{\text{max}}  \dot{\beta}(t) \leq  \lambda_{-}e^{-\beta(t)\gamma^*} \leq \lambda(t)$ by definition of $\beta(t)$ in the last inequality.

We also have (see eq. (\ref{sigma-eq}))
\begin{align*}
	 \bar{\mathcal{E}}_t(f(t),f(t)) - \mathcal{E}_t(f(t),f(t)) & = \frac{1}{2} \sum_{\substack{i\in \A\\ j \neq i}}  \pi_i(t) \sigma_{ij}(t) \big(f(j)  - f(i)\big)^2,
\end{align*}
so that
 \begin{align*}
	 \bar{\mathcal{E}}_t(f(t),f(t)) - \mathcal{E}_t(f(t),f(t))  &  \leq \frac{1}{2} \max_{i,j  ; i \neq j}  \sigma_{ij}(t)  \sum_{\substack{i\in \A\\ j \neq i}}  \pi_i(t)  \big(f(j)  - f(i)\big)^2.
\end{align*}
Let $\sigma(t) = \max_{i,j;  i \neq j}  \sigma_{ij}(t) $. The above inequality gives
 \begin{align*}
 \bar{\mathcal{E}}_t(f(t),f(t)) - \mathcal{E}_t(f(t),f(t))  & \leq k\sigma(t) \| f(t)\|^2_{\pi(t)}.
\end{align*}
Plugging this back in (\ref{jet}) and w.l.o.g. absorbing the factor of $k$ into the $\sigma(t)$ term , we have 
\begin{equation*}
\frac{d}{dt}\|f(t) \|^2_{\pi(t)} \leq -\big(\lambda(t)  -\sigma(t)\big) \|f(t) \|^2_{\pi(t)} + 2\lambda(t)
\end{equation*}
\begin{equation*}
\implies	\frac{d}{dt}\|f(t) \|^2_{\pi(t)}  + \big(\lambda(t)  -\sigma(t)\big) \|f(t) \|^2_{\pi(t)} \leq 2\lambda(t).
\end{equation*}
Set $\rho(t) =  \int_{0}^t \lambda(t) dt -   \int_{0}^t \sigma(t)dt $ and multiply both sides of the above inequality by $e^{\rho(t)}$ to get 	
\begin{equation*}
\frac{d}{dt} e^{\rho(t)} \|f(t) \|^2_{\pi(t)}  \leq 2\lambda(t) e^{\rho(t)}.
\end{equation*}
Thus,
\begin{equation}\label{finaleq}
  \|f(t) \|^2_{\pi(t)}   \leq  \frac{1}{e^{\rho(t)}}\|f(0)\|^2_{\pi(0)} + \frac{2}{e^{\rho(t)}} \int_{0}^t  \lambda(\tau) e^{\rho(\tau)} d\tau.
\end{equation}
We first establish conditions on $\gamma$ for  $\int_{0}^t \lambda(t) dt >  \int_{0}^t \sigma(t)dt$ to hold. We recall that $\lambda(t) \geq \pi_- e^{-\frac{\beta(t)}{\gamma^*}} = \pi_-(1+t)^{-{\frac{\gamma^*}{\gamma}}} $. Also, as discussed previously,  $\int_{0}^t \sigma(t)dt =\mathcal{O}( t^{1 -\frac{\epsilon}{2}} )$ (see Lemma \ref{newlem}). So, for the required condition to hold, we must have:
$$
 \int_{0}^t \lambda(t) dt >  \int_{0}^t \sigma(t)dt \iff \frac{ \pi_-}{1-\frac{\gamma^*}{\gamma} } (1+t)^{1-\frac{\gamma^*}{\gamma}} >   \int_{0}^t \sigma(t)dt 
$$
or equivalenty,
$$
 \frac{ \pi_-}{1-\frac{\gamma^*}{\gamma} } (1+t)^{1-\frac{\gamma^*}{\gamma}} > Mt^{1-\frac{\epsilon}{4}}
$$
The above condition will be satisfied for $\gamma>\frac{ 4\gamma^*}{\epsilon}$ for some $t >t_0$ (assume $t_0=0$ for simplicity). We next consider the integral term $\int_{0}^t  \lambda(\tau) e^{\rho(\tau)} d\tau$:
\begin{align*}
\int_{0}^t  \lambda(\tau) e^{\rho(\tau)} d\tau  & = \int_{0}^t  \frac{ \lambda(\tau) - \sigma(\tau) }{1 - \frac{ \sigma(\tau) }{\lambda(\tau)}} e^{ \int_{0}^\tau \lambda(t') dt' - \int_{0}^\tau \sigma(t')dt'}    d \tau\\
&= \int_{0}^t  \frac{ 1 }{1 - \frac{ \sigma(\tau) }{\lambda(\tau)}} \Bigg( \frac{d}{d\tau}e^{ \rho(\tau)} \Bigg)   d \tau.
\end{align*}
We also have
\begin{align*}
 \frac{\sigma(\tau)}{\lambda(\tau)} &\leq \frac{M }{\pi_- } \tau^{ \frac{\gamma^*}{\gamma}- \frac{\epsilon}{4}} \\
&\leq \Big( \frac{M }{\pi_-}  \Big) \frac{1}{\tau^{\frac{\epsilon}{4} - \frac{\gamma^*}{\gamma} } } \\
&\leq \frac{1}{2} ,
\end{align*}
if $\tau \geq \Big( \frac{2M }{\pi_- }  \Big)^{\frac{1}{\frac{\epsilon}{4} - \frac{\gamma^*}{\gamma}}} := \tau_0$. So, we have
\begin{align*}
\int_{0}^t  \lambda(\tau) e^{\rho(\tau)} d\tau  &\leq   2 \Big( e^{\rho(t)}    - e^{\rho(\tau_0)}\Big).
\end{align*}
Plugging this back in (\ref{finaleq}), we get
\begin{equation*}
  \|f(t) \|^2_{\pi(t)}   \leq \frac{ \|f(\tau_0)\|^2_{\pi(\tau_0)}}{e^{\rho(t)}} + 4\Big(1 - \frac{e^{\rho(\tau_0)}}{e^{\rho(t)}} \Big) \leq 4+ \|f(\tau_0)\|^2_{\pi(\tau_0)}\leq  4+ k.
\end{equation*}
This bounds $ \|f(t) \|^2_{\pi(t)}$ and hence concludes the proof.
\end{proof}
One can deduce from the previous result, that the convergence rate is upper bounded by $t^{- \frac{\Delta_{\text{min}}}{2\gamma}}$.  Depending upon the bound on $\gamma $ which in turn depends on the energy landscape (through $\gamma^*$) and the parameter $\epsilon$, this can be arbitrarily bad compared to the fully connected setting for which $\gamma^*,d=0$. 

To establish the regret, we use can proceed the same way as in Theorem 2.  W first recover the discrete time process $\{a_n\}_{n\geq 1}$ from $\{X(t)\}_{t>0}$ by using the following identification: $a_n = X(t), $ for $t \in[J_{n},J_{n+1})$. Then, we have, 
\begin{thm}
Assume the conditions of lemma \ref{newlem} and lemma \ref{problem} are fulfilled. Then, the regret of Algorithm 1 is bounded by
$$
R_n \leq  \Big( \frac{2M }{\pi_- }  \Big)^{\frac{1}{\frac{\epsilon}{4} - \frac{\gamma^*}{\gamma}}} +    \frac{\sqrt{ g(4 + k)}}{\big( 1-\frac{\Delta_{\text{min}}}{2\gamma}\big)}   (n+1)^{1- \frac{\Delta_{\text{min}}}{2\gamma}}
$$
with $\gamma> \frac{4\gamma^*}{  \epsilon}$.
\end{thm}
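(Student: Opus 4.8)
The plan is to follow the same route as the proof of Theorem~2, reducing the regret to a sum of per-round suboptimality probabilities and then invoking the decay estimate just established in Lemma~\ref{problem}. Concretely, I would first pass from the continuous-time process $\{X(t)\}$ back to the discrete chain $\{a_n\}$ via the identification $a_n = X(t)$ for $t\in[J_n,J_{n+1})$, and then apply the standard regret decomposition
\[
R_n = \sum_{i=1}^k \Delta_i\,\EX[T_i(n)] = \sum_{i=1}^k \Delta_i \sum_{t=1}^n \PP(a_t = i) \leq \Delta \sum_{t=1}^n \PP\big(a_t \in \A/\{a^*\}\big),
\]
where $\Delta = \max_{i}\Delta_i$ is bounded by the boundedness of the cost function and may be normalized to $\Delta\le 1$. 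This turns the task into bounding $\sum_{t=1}^n \PP(a_t\neq a^*)$, for which Lemma~\ref{problem} supplies the power-law tail $\PP(X(t)\in\A/\{a^*\}) \le \sqrt{2g(k+4)}\,(1+t)^{-\Delta_{\text{min}}/(2\gamma)}$ valid for $\gamma>4\gamma^*/\epsilon$.

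Next I would split the summation at the threshold $\tau_0 = \big(2M/\pi_-\big)^{1/(\epsilon/4 - \gamma^*/\gamma)}$ inherited from Lemma~\ref{problem}. For $t\le\tau_0$ I would use only the trivial bound $\PP(a_t\neq a^*)\le 1$, which contributes at most $\tau_0$ and produces the first, constant term of the stated bound; this is also precisely the regime in which the power-law estimate can exceed one, so nothing is lost by being crude there. For $t>\tau_0$ I would insert the Lemma~\ref{problem} estimate and dominate the resulting partial sum by the integral
\[
\sqrt{g(k+4)}\int_{0}^{n}(1+t)^{-\frac{\Delta_{\text{min}}}{2\gamma}}\,dt = \frac{\sqrt{g(k+4)}}{1-\frac{\Delta_{\text{min}}}{2\gamma}}\Big[(1+n)^{1-\frac{\Delta_{\text{min}}}{2\gamma}}-1\Big],
\]
which yields the second term. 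The hypothesis $\gamma>4\gamma^*/\epsilon$ enters twice: it is exactly what makes the exponent $\tfrac{\epsilon}{4}-\tfrac{\gamma^*}{\gamma}$ positive, so that $\tau_0<\infty$ and the conclusion of Lemma~\ref{problem} holds, and it simultaneously keeps $1-\Delta_{\text{min}}/(2\gamma)\in(0,1)$, so the tail integral converges to a sublinear quantity and the overall rate is $\mathcal{O}\big(n^{1-\Delta_{\text{min}}/(2\gamma)}\big)$.

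Since the substantive analytical work — the Dirichlet-form/Poincar\'e argument bounding $\|f(t)\|_{\pi(t)}$ in Lemma~\ref{problem} together with the perturbation control $\int_0^t\sigma(\tau)\,d\tau \le Mt^{1-\epsilon/4}$ from Lemma~\ref{newlem} — has already been carried out, the only genuine care needed here is bookkeeping. The step I expect to be most delicate is the continuous-to-discrete transfer: the regret decomposition is stated in terms of the discrete pull counts $T_i(n)$, evaluated at the \emph{random} jump times $J_n$, whereas Lemma~\ref{problem} controls $\PP(X(t)\neq a^*)$ only at \emph{deterministic} continuous times $t$. I would therefore need to argue that the per-index marginals $\PP(a_t\neq a^*)$ inherit the same $(1+t)^{-\Delta_{\text{min}}/(2\gamma)}$ decay under the identification $a_n=X(t)$, either by passing to the occupation-time formulation of regret in continuous time or by controlling the relation between $J_n$ and $n$. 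The remaining constant-chasing (the $\sqrt{2}$ factors and the absorption of $\Delta\le 1$) is routine and affects only the multiplicative constants, not the stated rate.
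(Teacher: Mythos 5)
Your proposal coincides with the paper's intended argument: the paper supplies no written proof of this theorem beyond the remark that one should ``proceed the same way as in Theorem 2'' after identifying $a_n = X(t)$ for $t\in[J_n,J_{n+1})$, and your regret decomposition, the split at $\tau_0 = (2M/\pi_-)^{1/(\epsilon/4-\gamma^*/\gamma)}$ with the trivial bound before the threshold and the tail estimate of Lemma~\ref{problem} integrated afterwards, is exactly what reproduces the two displayed terms (up to the same $\sqrt{2}$ and $\Delta\le 1$ constants the paper itself drops). You are also right to flag the transfer from deterministic continuous times to the random jump times $J_n$ as the one genuinely delicate step; the paper leaves that point unaddressed as well.
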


%\begin{rem}
%In making simulated annealing more efficient for solving stochastic/bandit optimization with large/continuous action sets, one must be cognizant of the fact that the Markov chain structure of discrete simulated annealing naturally arises in many combinatorial problems which can have local or global least cost configurations. For stochastic optimization, a graphical structure on the domain set may seem contrived and one may as well assume a fully connected setting, i.e. any solution point can be accessed from the current one. Given this, we conjecture that a more structured  approach towards selecting the possible candidate for transition can help SA perform better. One may use Algorithm 1 as a subroutine in conjunction with a doubling trick to tackle continuous action sets where the pay-off function has some regularity (see Algorithm CAB1 in \cite{klein}). Another possible approach could be to use the tree based search technique of \cite{bubeck1}. However, in general,  any search method employed for optimization on Euclidean spaces has the disadvantage of the convergence rate depending on the  dimension of the search space (see e.g. Chapter 2 in \cite{bubeck2} ).
%\end{rem}

\begin{rem}
The monotonically decreasing cooling schedule of simulated annealing has been a point of debate ever since it was proposed. The case of noisy observations, as one can guess, inherits these problems. It is a common practice to employ a constant time schedule in bandit algorithms (see e.g. the detailed experiments performed in \cite{krup}), even though the theoretical guarantees may not be exact. We can also suggest the following alternative here: Let $T_{e_{ij}}(t)$ denote the number of visits to edge $e_{ij} \in \A\times\A$ determined by $i,\,j$, then we defined the cooling schedule as:
$$
\beta(t) := \frac{\log T_{e_{a_n,a'}} (t)}{\gamma},
$$
where $a' \in \N(a_n)$ is the candidate arm uniformly selected from the neighbourhood of the current arm $a_n$. This makes the cooling schedule depend on the state of the algorithm. By a limiting pigeon-hole argument for a finite action set (which implies that at least one edge in $\G$ will be visited infinitely often), one can see that $\limsup_t \beta(t) \to \infty$. Although, the previous results may not hold with the same guarantees since the time dependence in the upper bound will be through $T_{e_{a_n,a'}}(t)$ instead of $t$, we conjecture that this time step may lead to a more adaptive approach to exploration.

\end{rem}
$$
$$

\section{Stochastic Multi-Armed Bandit}
For completion, in this section, we consider the standard stochastic MAB problem which can be considered the special case of the previous section. To be more precise, we do not assume a graphical structure on $\A$, so that  one may think of the graph as being fully connected. It is, however,  inadvisable to carry out a straightforward naive implementation of the simulated annealing algorithm . Instead, we propose a slight modification,  which takes into account the additional structure of a fully connected setting. 

\begin{algorithm}[htb]
\textbf{Input:} Arm set $\A = [k]$;  Time horizon $n$; Temperature Parameter $\gamma$.
\\
\textbf{Initialization:} (i) Initialize the number of visits to any arm till time $p$, denoted by $T_a(p) $, to $0$. (ii) Initialize the empirical mean estimate $\hat{\mu}_{i,T_a(0)} =-\infty$, for all $i \in [k]$.
\\
\For{$p=0,\cdots,n$ } { 
            \textbf{(a)}  If an arm has not been played for $ \Big\lceil \frac{16 \log n }{ \gamma^2}\Big\rceil$ times, play it or else select arm $a_{\text{min}} $ according to:
  		    $$
  		    a_{\text{min}} = \argmin_{i \in [k]} \hat{\mu}_{i,T_i(p)}.
  		    $$
          \textbf{(b)} Uniformly randomly select an arm $a' \in \A/\{a_{\text{min}}\}$. Accept the transition according to
          $$
  			 P_{a_{\text{min}},  a' } = \exp \Big(- \frac{ \big(\hat{\mu}_{a',T_{a' }(p) } -\hat{\mu}_{a_{\text{min}},T_{a_{\text{min}} }(p) } \big)^+}{T_p} \Big).
 		$$
		 \textbf{(c)} Pull the currently selected arm $a_{p+1}$ to get the sample $\mu_{a_{p+1}}'$, and update the mean according to 
		 $$
		 \hat{\mu}_{a_{p+1},T_{a_{p+1} }(p) +1} = \Big(1 - \frac{1}{T_{a_{p+1}}(p)+1}  \Big) \hat{\mu}_{a_{p+1},T_{a_{p+1} }(p) }+ \frac{\mu_{a_{p+1}}'}{T_{a_{p}+1}(p)+1} .
		 $$
		 \textbf{(d)} Update the cooling schedule by setting $T_{p+1} =  \frac{\gamma}{\log (p + 1)}$.\\
		 
}
 
 \textbf{Output}: Resulting Policy: $\nu(n)$ \\

\caption{ SA Bandits}
\end{algorithm}
 We devote this current section for formally proposing and analysing this algorithm. The pseudo code is provided in Algorithm 2. A key difference from traditional SA lies in how we select a root node from which the ensuing exploratory move may be performed. One can note that the exploration process is no longer Markovian in nature. However, selecting the $\argmin_{i} \hat{\mu_i}(\cdot) $ in step(a) is not necessary as discussed in the following remark. 
 
 \begin{rem}
We can retain the original structure of the SA algorithm, i.e. use the previously selected state in place of $\argmin_{i} \hat{\mu_i}(\cdot) $ to perform the exploratory move. The regret will be still logarithmic but the $(\Delta_{i}-\gamma)^2$ term in the denominator of the second term in (\ref{reg-ba}) in Theorem 5 will change to $ \Big( \min( \mu_i - \mu_{i-1}, \mu_{i+1} -\mu_i ) -\gamma \Big)^2 $, if the initial number of pulls for each arm is $\lceil \frac{16 \log n }{ (\min( \mu_i - \mu_{i-1}, \mu_{i+1} -\mu_i ) -\gamma)^2} \rceil $. 
 \end{rem}
 
 \begin{rem}
The requirement of the initial pulls of $ \Big\lceil \frac{16 \log n }{ \gamma^2}\Big\rceil$ is again also not necessary from an empirical standpoint. One can also use a different strategy where once an arm is uniformly selected in Step (b), the probability transition mechanism is overridden and the arm is pulled with probability one if $T_a(t) \leq \Big\lceil \frac{16 \log n }{ \gamma^2}\Big\rceil$. This introduces an additional (constant) term in the regret which does  not arise for Algorithm 2. %Another possible strategy is to pull both arms (the current arm and the candidate for transition) in step(b) for $O\Big( k\Big\lceil \frac{16 \log n }{ \gamma^2}\Big\rceil\Big)$ rounds.
\end{rem}
The next theorem gives the upper bound on the regret of Algorithm 2. This turns out to be logarithmic, thereby  establishing the efficacy  of Algorithm 2 in solving the MAB problem. Without loss of generality, we assume that the first arm is optimal, so that $\mu_{a^*} = \mu_1$, and $\mu_1\leq \mu_2\leq\cdots\leq\mu_k$ and $\Delta_i := \mu_i -\mu_1$.
\begin{thm}
If $\gamma \in (0,\Delta_2)$, then the regret of Algorithm 2 on any bandit $P \in \mathcal{E}^{k}_{\text{SG}}(1) $ environment, is bounded by
\begin{equation}\label{reg-ba}
R_n \leq  \Big(2k+\frac{ \log n}{k-1}\Big) \sum_{i=1}^{k} \Delta_i + \sum_{i=1}^k \frac{16 \Delta_i   }{ (\Delta_i - \gamma)^2}   \log n,
\end{equation}
where $n$ is the number of rounds and $k$ is the number of arms .
\end{thm}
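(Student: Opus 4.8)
The plan is to start from the standard regret decomposition $R_n = \sum_{i=1}^k \Delta_i\,\EX[T_i(n)]$, where $T_i(n)$ is the number of pulls of arm $i$ up to round $n$, and to bound $\EX[T_i(n)]$ for each suboptimal arm $i$ (the term $i=1$ drops out since $\Delta_1=0$). For a fixed suboptimal $i$ I would single out the \emph{good event} $\mathcal{G}_p$ that the empirical gap satisfies $\hat{\mu}_i(p)-\hat{\mu}_{a_{\text{min}}}(p)\geq \gamma$. The reason to threshold exactly at $\gamma$ is that, since $\beta_p=\log(p+1)/\gamma$, on $\mathcal{G}_p$ the Metropolis acceptance probability of moving to $i$ collapses to $\exp(-\beta_p\gamma)=(p+1)^{-1}$; this is the mechanism that will convert the annealing schedule into a logarithmic, rather than polynomial, number of accepted transitions, and it is precisely why the hypothesis $\gamma\in(0,\Delta_2)$ (so that $\Delta_i-\gamma>0$ for every $i\neq1$) is needed.

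First I would invoke a UCB-style threshold argument with $u_i:=\lceil 16\log n/(\Delta_i-\gamma)^2\rceil$,
\[
\EX[T_i(n)] \leq u_i + \sum_{p=1}^{n}\PP\big(a_{p+1}=i,\,T_i(p)\geq u_i\big),
\]
which holds because there can be at most $u_i$ rounds on which arm $i$ is pulled while $T_i(p)<u_i$. The choice of $u_i$ is dictated by sub-Gaussian concentration: on $P\in\E^k_{\text{SG}}(1)$ the difference $\hat{\mu}_i(p)-\hat{\mu}_{a_{\text{min}}}(p)$ is sub-Gaussian about $\Delta_i$ with variance at most $2/\min\{T_i(p),T_{a_{\text{min}}}(p)\}$, so once both counts exceed $u_i$ a Chernoff bound gives $\PP(\mathcal{G}_p^c)\leq 2n^{-4}$; the constant $16$ and the exponent $(\Delta_i-\gamma)^2$ enter here. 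This step is what produces the dominant summand $\sum_i 16\Delta_i(\Delta_i-\gamma)^{-2}\log n$.

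Next, on $\{T_i(p)\geq u_i\}\cap\mathcal{G}_p$ I would argue that $a_{\text{min}}=a^*$: the forced exploration of $\lceil 16\log n/\gamma^2\rceil$ pulls per arm, combined with $\gamma<\Delta_2$, concentrates every empirical mean within $\gamma/2$ of its true value and hence preserves the ordering of the arms with high probability, so the optimal arm is the empirical minimum. Then arm $i$ can be reached only as the uniformly drawn candidate (probability $1/(k-1)$) followed by acceptance (probability at most $(p+1)^{-1}$), and $\sum_{p}(k-1)^{-1}(p+1)^{-1}\leq \log n/(k-1)$ delivers the $\tfrac{\log n}{k-1}\sum_i\Delta_i$ contribution. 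The remaining contributions — the rounds where concentration fails ($\mathcal{G}_p^c$), the rounds where $a_{\text{min}}=i$, and the bookkeeping of the forced-exploration phase — each sum to a constant after a union bound over the $k$ arms and the use of $\sum_p n^{-4}<\infty$, and these aggregate into the $2k\sum_i\Delta_i$ term. Collecting the three pieces yields $\EX[T_i(n)]\leq 2k+\tfrac{\log n}{k-1}+16(\Delta_i-\gamma)^{-2}\log n$, and multiplying by $\Delta_i$ and summing over $i$ gives the stated bound.

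The hard part will be the non-Markovian, self-referential nature of the selection rule: the root $a_{\text{min}}$ is a function of the entire history of empirical means, and those means co-evolve with which arms are actually pulled, so the Markov-chain and Dirichlet-form machinery of Section~4 is unavailable. The delicate point is to make the concentration bound defining $\mathcal{G}_p$ hold \emph{simultaneously} for the candidate arm $i$ and for the data-dependent reference arm $a_{\text{min}}$, uniformly over all rounds $p$, while verifying that the forced-exploration pulls are already absorbed into $u_i$ instead of contributing a separate $16\log n/\gamma^2$ term (this is exactly the distinction flagged in the remark following the theorem). I expect the cleanest route is to condition on the per-arm pull counts and apply a peeling/maximal bound over the admissible values of $T_i(p)$ and $T_{a_{\text{min}}}(p)$, which decouples the estimation error from the adaptive selection and lets the constant-order residuals be summed safely.
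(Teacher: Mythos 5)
Your proposal is correct and follows essentially the same route as the paper: the regret decomposition $R_n=\sum_i\Delta_i\,\EX[T_i(n)]$, a sub-Gaussian good event with confidence radius $\sqrt{2\log(1/\delta)/s}$ and threshold $u_i=\lceil 16\log n/(\Delta_i-\gamma)^2\rceil$ so that the acceptance probability of a suboptimal move collapses to $1/t$, the $1/(k-1)$ uniform-candidate factor giving the $\log n/(k-1)$ term, and the bad event contributing the $2k$ constant. Your per-round event $\mathcal{G}_p$ and the observation that an empirical gap of at least $\gamma$ forces acceptance probability $(p+1)^{-1}$ are only cosmetic repackagings of the paper's global event $G$ and its explicit exponent computation.
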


\begin{proof}
 We recall the equation,
$$
R_n = \sum_{i=1}^k \Delta_i \EX[T_i(n)].
$$
We proceed by considering a suitably defined event $G$ and subsequently bounding $\EX[T_i(n)]$ for each sub-optimal arm $i \in \A/\{a_1\}$ on $G$ (and its complement). Accordingly, the event $G$ is defined to be:
\begin{multline*}
    G := \Big\{ \mu_1 \leq \min_{s \in [n]} \{\Mu_{1,s} + \epsilon_{1,s} \} \Big\} \cap  \Big\{ \max_{s \in [n]}  \{\Mu_{1,s} - \epsilon_{1,s} \} \leq \mu_1\leq \mu_2 \leq  \min_{s \in [n]} \{\Mu_{2,s} + \epsilon_{2,s}  \} \Big\}...\\
    .....\cap \Big\{ \max_{s \in [n]}  \{\Mu_{k-1,s} - \epsilon_{k-1,s} \} \leq \mu_{k-1}\leq \mu_{k}  \leq  \min_{s \in [n]} \{ \Mu_{k,s} + \epsilon_{k,s} \}\Big\}... \\
     ....\cap \{\min_{s \in [n]} \{\Mu_{k,s} - \epsilon_{k,s}\} \leq \mu_k \Big\},
\end{multline*}
where,
$$
\epsilon_{i,s} = \sqrt{\frac{2 \log\frac{1}{\delta}}{s}}.
$$
We specify the value of $\delta$ below. We essentially proceed by demonstrating two facts which will allow us to bound $\EX[T_i(n)]$ :\\

1. The complement event $G^c$ occurs with sufficiently low probability if we set $\delta = \frac{1}{n^2}$. \\

2. On the event $G$, if each arm has been been played at least $u_i  = \frac{16 \log n}{(\Delta_i-\gamma)^2}$ times, the probability of transitioning to a sub-optimal arm diminishes as $\mathcal{O}(\frac{1}{t})$ for round $t$.

$$
$$
We have:
\begin{equation}\label{mainbanditregret}
    \EX[T_i(n)] = \EX[\mathbb{I}\{G\}T_{i} (n)]+ \EX[\mathbb{I}\{G^c\}T_{i} (n)]. 
\end{equation}
We first consider fact 1. Let us define an event $G^c_i$ as
$$
G^c_i =  \Big\{ \mu_i \geq \min_{s \in [n]} \{\Mu_{i,s} + \epsilon_{i,s} \}  \Big\} \cup  \Big\{  \mu_i \leq \max_{s \in [n]}  \{ \Mu_{i,s} - \epsilon_{i,s} \}  \Big\}.
$$
We remark in passing that by definition, $G^c= \cup_i\, G^c_i$. We have
\begin{align*}
G^c_i  &\subset  \bigcup\limits_{s\in[n]} \Big\{ \{ \mu_i \geq  \Mu_{i,s} + \epsilon_{i,s}  \}  \cup  \{  \mu_i \leq \Mu_{i,s} - \epsilon_{i,s} \} \Big\}\\
&= \bigcup\limits_{s\in[n]} \big\{ |  \mu_ i- \Mu_{i,s}|
 \geq \epsilon_{i,s}
  \big\}.
\end{align*}
Since we assume sub-Gaussian bandits, the tail decay can be bounded using  the Cramer-Chernoff bound, which gives
$$
\PP(| \mu_ i- \Mu_{i,s}|
 \geq \epsilon_{i,s} )  \leq 2\exp\Big(- \frac{s\epsilon^2_{i,s}}{2}\Big).
$$
By definition of $\epsilon_{i,s}$, we have 
\begin{equation*}
\PP(G_i^c ) \leq  \PP\Big( \bigcup\limits_{s\in[n]} \big\{ | \mu_ i- \Mu_{i,s}|
 \leq \epsilon_{i,s} \big\} \Big) 
 \leq \sum_{s=1}^n \PP\Bigg(  | \mu_ i- \Mu_{i,s}|
 \leq \sqrt{\frac{2 \log\frac{1}{\delta}}{s}} \,\Bigg)  \leq 2n\delta .
\end{equation*}
Using this estimate we have 
\begin{equation}\label{reg-1}
\EX[\mathbb{I}\{G^c\}T_{i} (n)] \leq n \PP(G^c) \leq 2n^2\delta k\leq 2k
\end{equation}
using the fact that $\delta=\frac{1}{n^2}$.\\

Next, we prove fact 2. We note that on the event $G$, $\text{argmin}_i\, \Mu_{i,T_i(t)} = \mu_1 $ is guaranteed (since $\epsilon_{i,T_i(t)} \leq  \frac{\Delta_1 -\gamma}{2} $ for all $i$, when $T_i(t) \geq  \Big\lceil \frac{16 \log n }{ (\Delta_i -\gamma)^2}\Big\rceil$). So, we need only consider the probability of transitioning from the first arm to any other sub-optimal arm. To avoid notational clutter, let $u_i =T_i(t)$ in what follows. For any round $t$ and $i \neq 1$,
\begin{align*}
    \exp\Big( -\frac{(\Mu_{i,u_i} - \Mu_{1,u_1})^+}{T_t} \Big) &=   \exp\Big( -\frac{\Mu_{i,u_i} - \Mu_{1,u_1}}{T_t} \Big) \\
    &\leq  \exp\Big( \frac{\mu_{1} + \epsilon_{1,u_1} - \mu_{i} + \epsilon_{i,u_i}}{T_t} \Big).
\end{align*}
We use the fact, on the event $G$, $\Mu_{1,s} \leq \mu_1 +  \epsilon_{1,s} $ and $\Mu_{i,s} \geq \mu_i - \epsilon_{i,s} $. Furthermore, by our choice of $\epsilon_{i,s}$, we have 
\begin{align*}
\epsilon_{i,u_i} &\leq \frac{\Delta_i -\gamma}{2}. 
\end{align*}
So, at any round $t$, we have the probability of transitioning to a sub-optimal arm is bounded by: 
\begin{align*}
    \exp\Big( -\frac{\Mu_{i,u_i} - \Mu_{1,u_1}}{T_t} \Big) &\leq  \frac{\exp( \frac{\epsilon_{1,u_1} + \epsilon_{i,u_i}}{T_t} )}{\exp( \frac{\mu_{i}  - \mu_{1}  }{T_t}) }\\
    &= \frac{t^\frac{\epsilon_{1,u_1} + \epsilon_{i,u_i}}{\gamma}} {t^{\frac{\mu_i -\mu_1}{\gamma}}}.  \\
    &\leq \frac{t^{\frac{\Delta_i -\gamma}{2\gamma}+ \frac{\Delta_2 -\gamma}{2\gamma}}}{t^{\frac{\Delta_i}{\gamma}}}\\
    &= \frac{t^{\frac{\Delta_i +\Delta_2}{2\gamma}-1}}{t^{\frac{\Delta_i}{\gamma}}}\\
    &\leq \frac{1}{t}. 
\end{align*}
The above estimate can be used to bound $\EX[T_i(n)]$ on $G$. We have, on event $G$, the following holds 
\begin{equation}\label{reg-2}
\EX[\mathbb{I}\{G\} T_i(n)]  \leq  \Big\lceil \frac{16 \log n }{ (\Delta_i - \gamma)^2}\Big\rceil + \frac{1}{k-1}\sum_{t\in [n]} \frac{1}{t} 
\leq \Big\lceil \frac{16 \log n }{ (\Delta_i - \gamma)^2}\Big\rceil + \frac{\log n}{k-1}.
\end{equation}
It remains to bring together our estimates of $\EX[T_i(n)]$ on events $G$ and $G^c$ to finish the proof. Using (\ref{reg-1}) and (\ref{reg-2}) in (\ref{mainbanditregret}), we have
\begin{multline*}
R_n = \sum_i \Delta_i \EX[T_i(n)] \leq  \sum_i \Delta_i \Big(2k+ \Big\lceil \frac{16 \log n }{ (\Delta_i - \gamma)^2}\Big\rceil + \frac{\log n}{k-1} \Big)
\leq  \Big(2k+ \frac{ \log n}{k-1}\Big) \sum_{i=2}^{k} \Delta_i + \sum_{i=2}^k \frac{\Delta_i  \times 16  \log n }{ (\Delta_i - \gamma)^2}. 
\end{multline*}
\end{proof}

We have stated Algorithm 2 with $\gamma := \frac{\Delta_2}{2}$ in Step(a). The regret bound obtained above can be turned into one that is independent of the reciprocal of the sub-optimality gaps, $\Delta_i$. A straightforward calculation in the above regret bound yields (see e.g. Theorem 7.2, \cite{tor}):
$$
R_n \leq  16 \sqrt{nk \log n}  + \Big(2k + \frac{ \log n}{k-1}\Big) \sum_{i=1}^{k} \Delta_i. 
$$

\section*{Appendix A. Numerical Experiments}
In this appendix, we provide empirical confirmation of the efficiency of the proposed algorithm for solving the stochastic MAB. An instance of the bandit problem is characterized by $k$ (number of arms) and the reward distributions (assumed to be normal with variance $\sigma^2$). We consider a reward maximization problem with the reward means calculated according to:
$$
\mu_{a}=
\begin{cases}
0, \,\,\text{if } a =a^* \\
-\Delta_{\text{min}} - | \mathcal{N}(0,0.1)|, \,\,\text{if } \,a \in \A/\{a^*\}
\end{cases}
$$
The value $\Delta_{\text{min}}$ has been set to $0.1$ for all experiments performed here.

We briefly describe the methodology of the algorithms we use for performance comparison:

\textit{(i) $\epsilon$-Greedy:} At round $t$,  the probability $P_a(t)$ of selecting arm $a$ is given by:
$$
P_a(t)= \begin{cases}
1- \epsilon + \frac{\epsilon}{k},\,\,\,\text{if} \,\, a = \argmax_{a' \in \A} \hat{\mu}_{a'}(t),\\
\frac{\epsilon}{k}\,\,\,\text{otherwise.}
\end{cases}
$$

\textit{(ii) UCB:} The UCB family of algorithms incorporate the idea of optimism in the face of uncertainty to determine the policy. Initially, each arm is played once and subsequently for any round $t$, the algorithm greedily selects arm $a_t$ according to:
$$
a_t \in \argmax_{a'\in\A} \Bigg\{ \hat{\mu}_{a'}(t) + \sqrt{ \frac{2 \ln t}{T_{a'}(t)}}\,\Bigg\}.
$$

\textit{(iii) Boltzmann Exploration/Softmax:} This algorithm is a softmax method where the probability of picking the arm is decided by the Boltzmann distribution, i.e. the probability of selecting an arm is proportional to its current empirical mean: 
$$
P_a(t+1) := \frac{e^{\frac{\hat{\mu}_a(t)}{\tau}}}{\sum_{a' \in \A}  e^{\frac{\hat{\mu}_{a'}(t)}{\tau}}},
$$ 
where $\tau$ controls the randomness of the choice.

The results have been plotted for two criterion: (i) Fraction of optimal arm plays (ii) Regret accumulated till time $n$. The parameters for all the comparison algorithm have been set according to \cite{krup}. As is evident from the plotted results, the performance of the tested algorithms varies to a great degree depending on $k$ and $\sigma^2$. The broad conclusion that one can draw about the proposed algorithm is that \emph{it has the most robust performance across all the metrics we have tested against.} For large variances, the performance of the algorithms do not differ to any significant degree except for the case of 2-armed bandit where the UCB algorithm dominates for large variances but the performance degrades significantly for low variances (this is consistent with the observations of \cite{krup}). 
\newpage
\begin{table}[h]
\begin{tabular}{c@{}c@{}c}

  \includegraphics[width=0.5\textwidth]{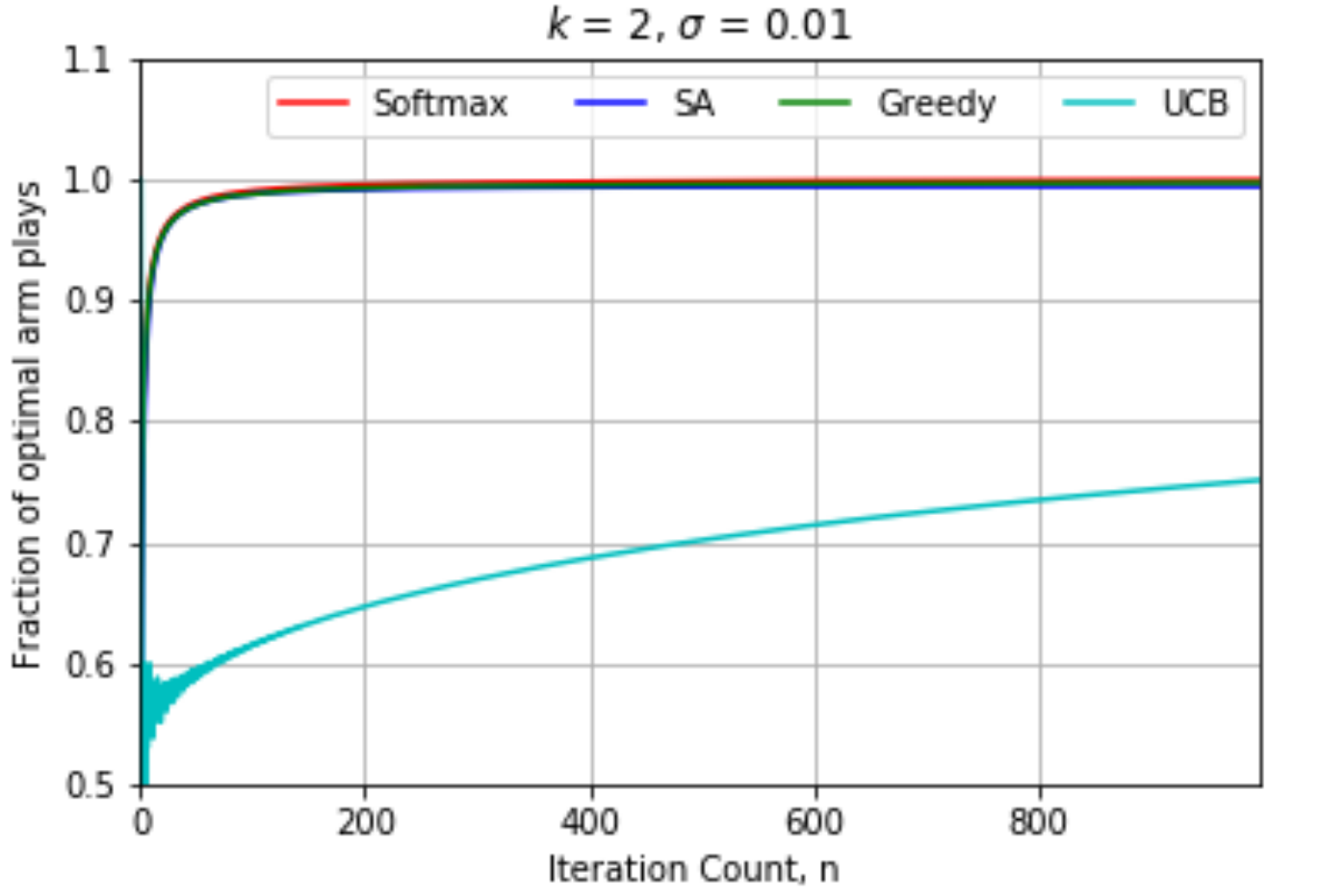}&
  \includegraphics[width=0.5\textwidth]{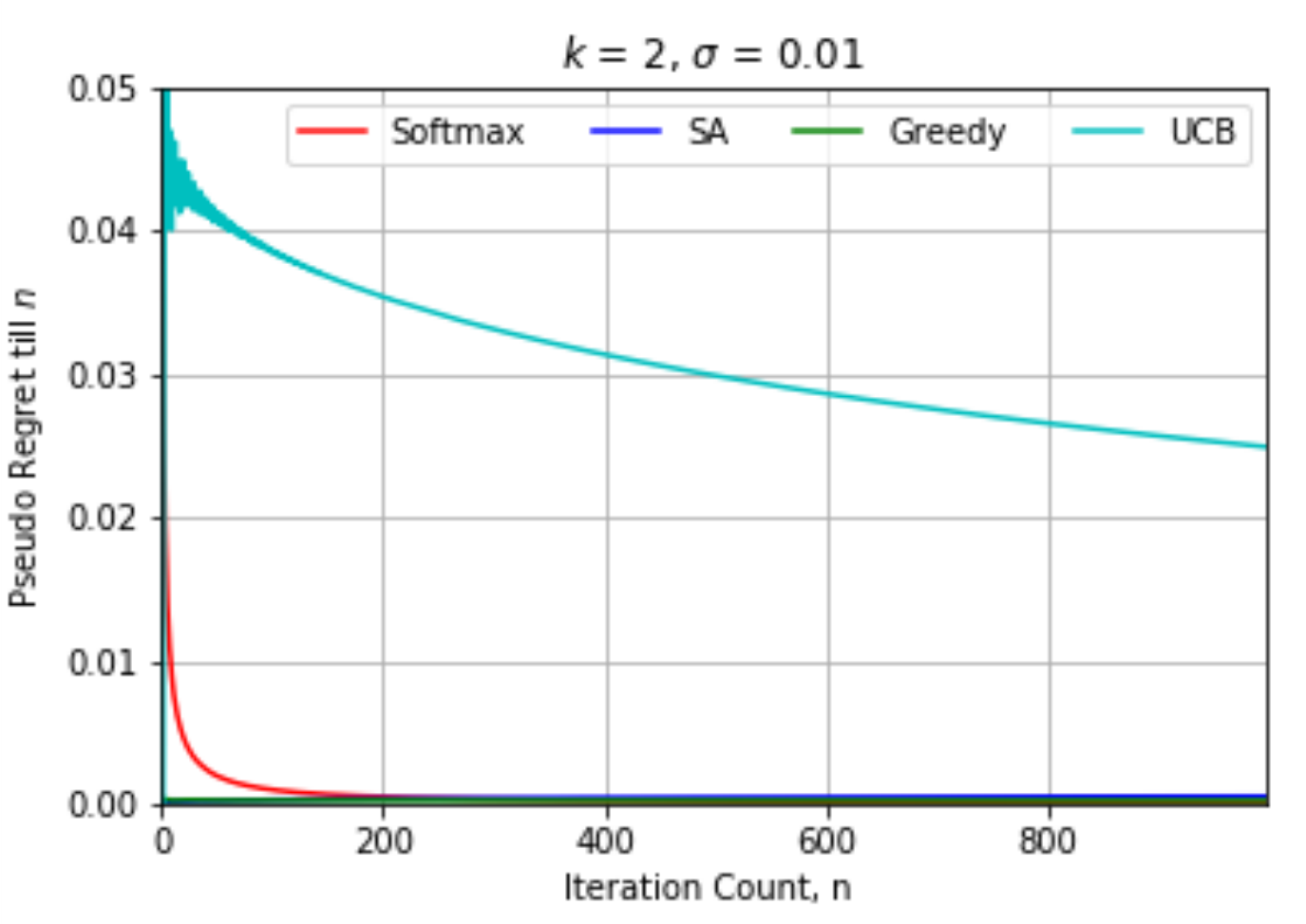}\\
  
     \includegraphics[width=0.5\textwidth]{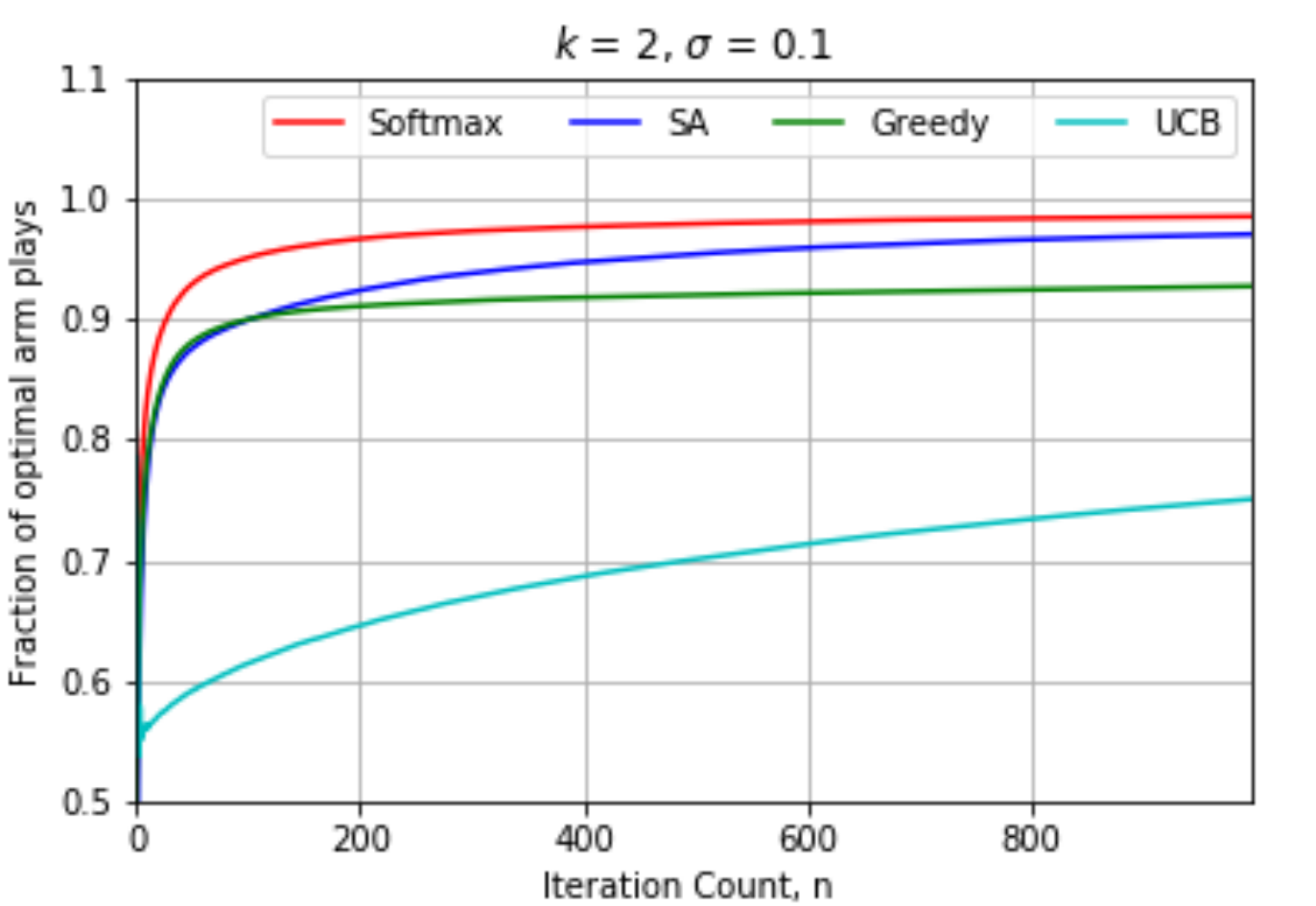}&
  \includegraphics[width=0.5\textwidth]{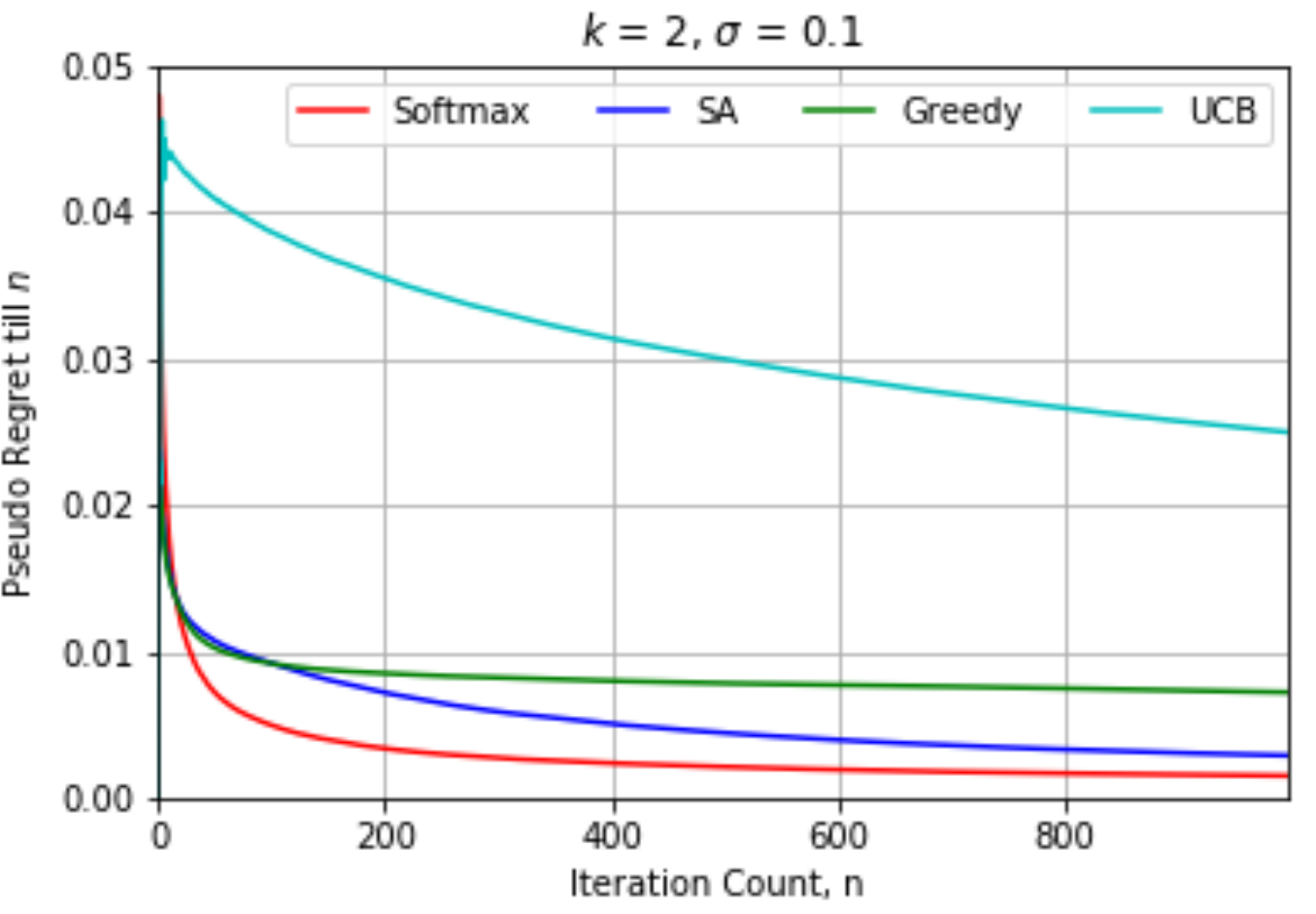}\\
  
   \includegraphics[width=0.5\textwidth]{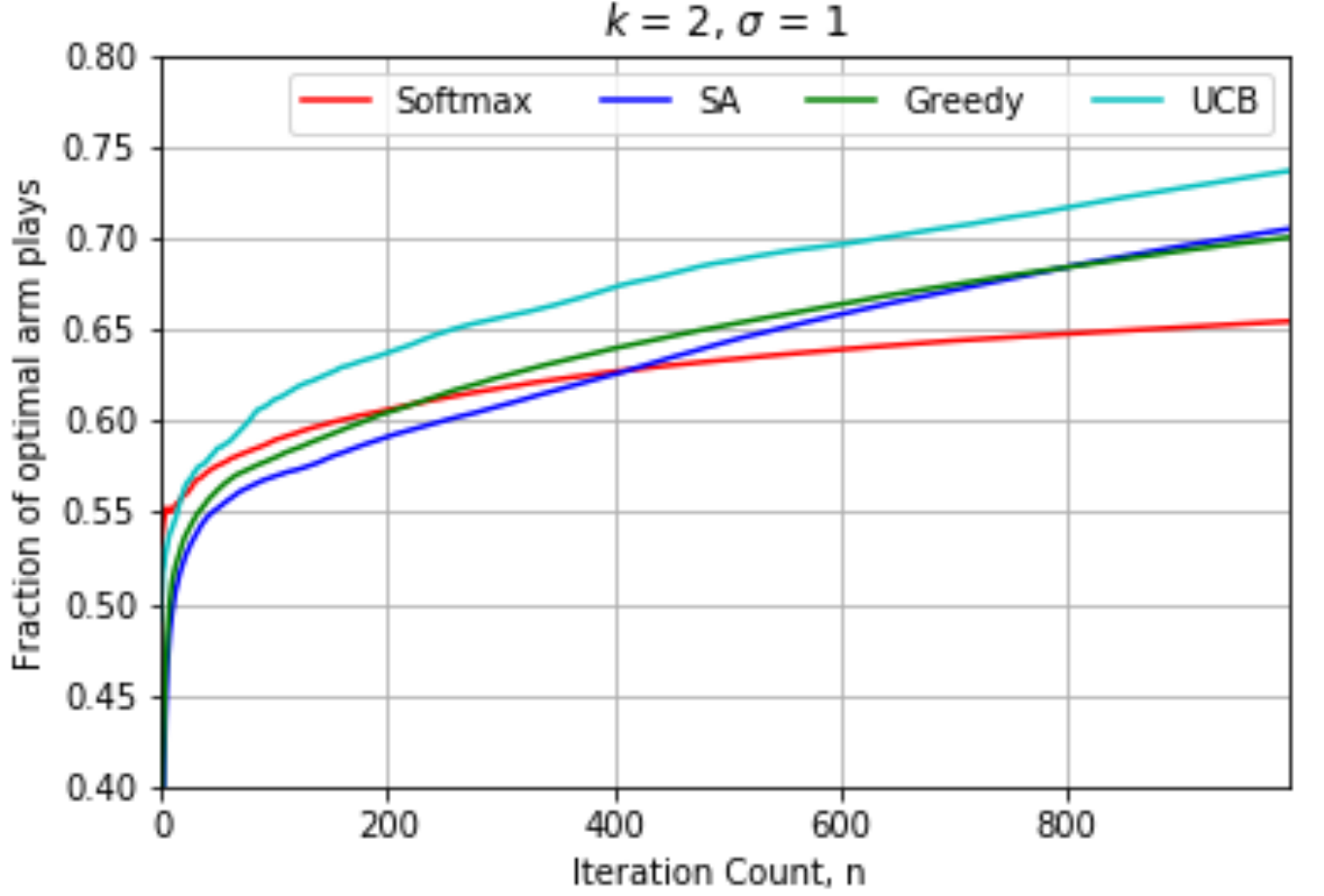}&
  \includegraphics[width=0.5\textwidth]{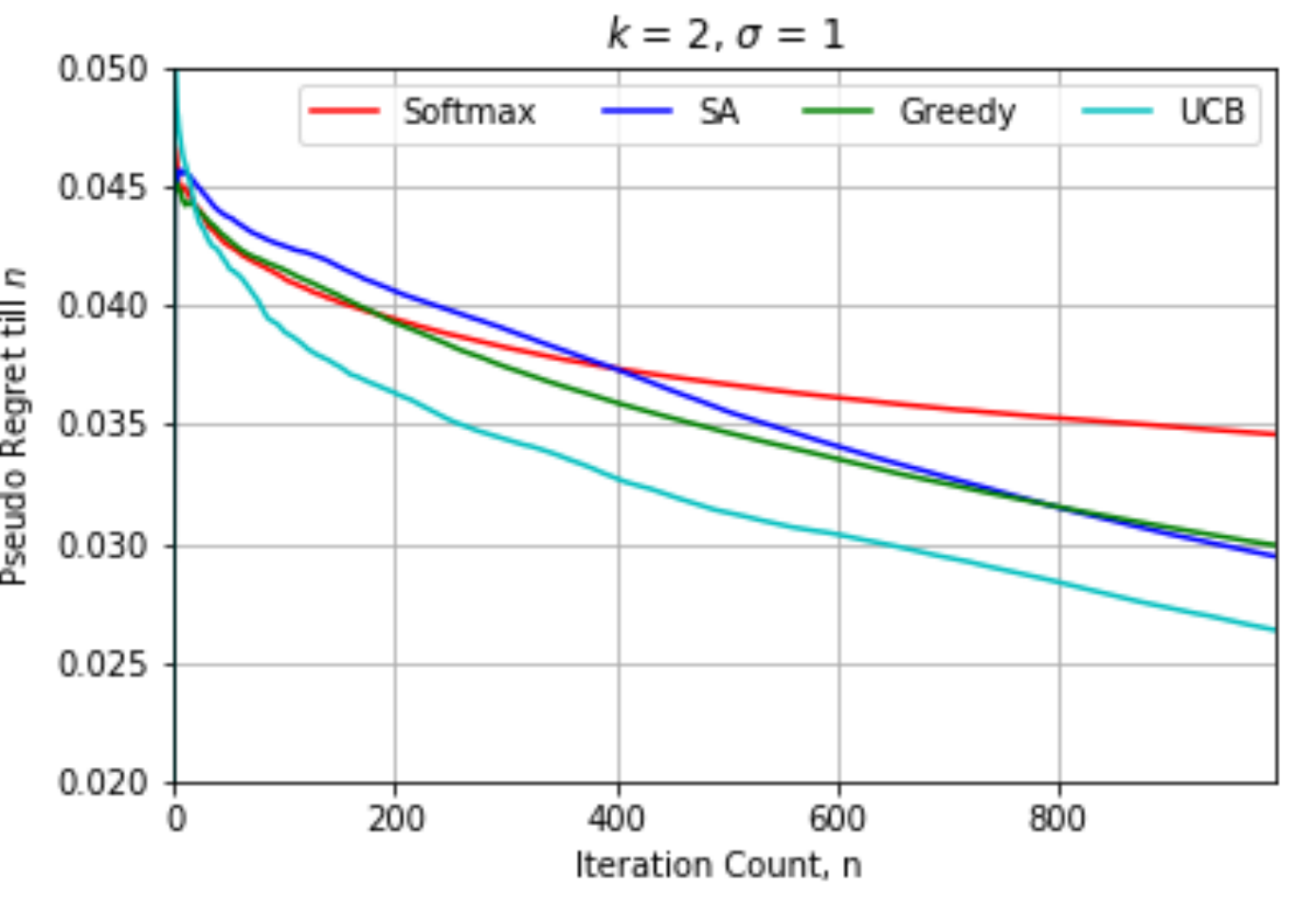}\\

\end{tabular}
\centering
\caption*{\textbf{Parameters ($k=2,\, \sigma \in\{0.01,0.1,1\}, \,\Delta_{\text{min}}=0.1$)}:\\
\textbf{$\sigma=0.01$} :  $\text{SA, }\gamma^{-1}= 0.002\,;\,\text{Softmax, } \tau = 0.001\,;\,\epsilon\text{-greedy, } : \epsilon = 0.005$.\\
 \textbf{$\sigma=0.1$} :  $\text{SA, }\gamma^{-1}= 0.01\,;\,\text{Softmax, } \tau = 0.01\,;\,\epsilon\text{-greedy, } : \epsilon = 0.001$.\\
  \textbf{$\sigma=1$} :  $\text{SA, }\gamma^{-1}= 0.2\,;\,\text{Softmax, } \tau = 0.1\,;\,\epsilon\text{-greedy, } : \epsilon = 0.05$.\\}
\end{table}
\newpage

\begin{table}[h]
\begin{tabular}{c@{}c@{}c}

  \includegraphics[width=0.48\textwidth]{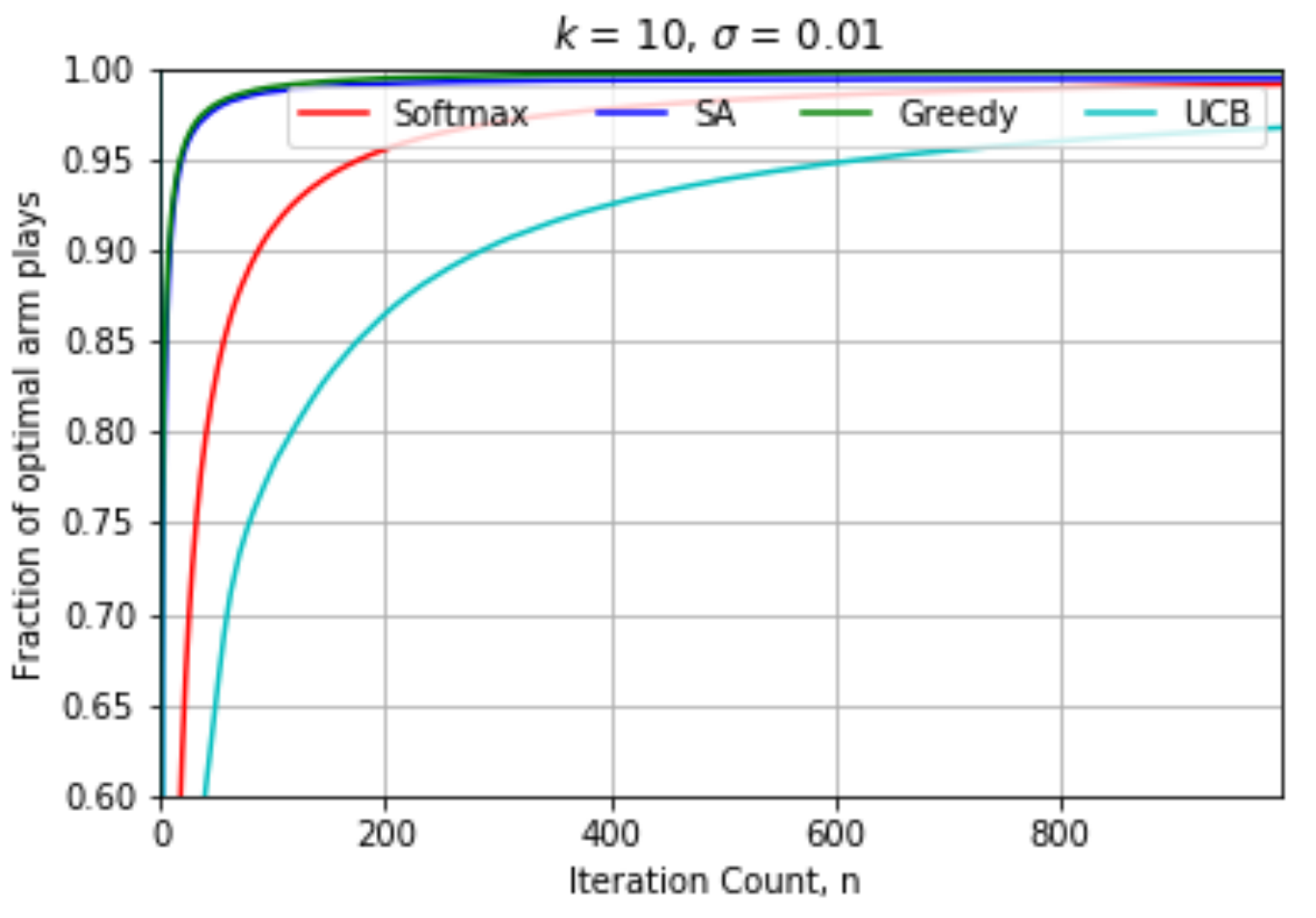}&
  \includegraphics[width=0.5\textwidth]{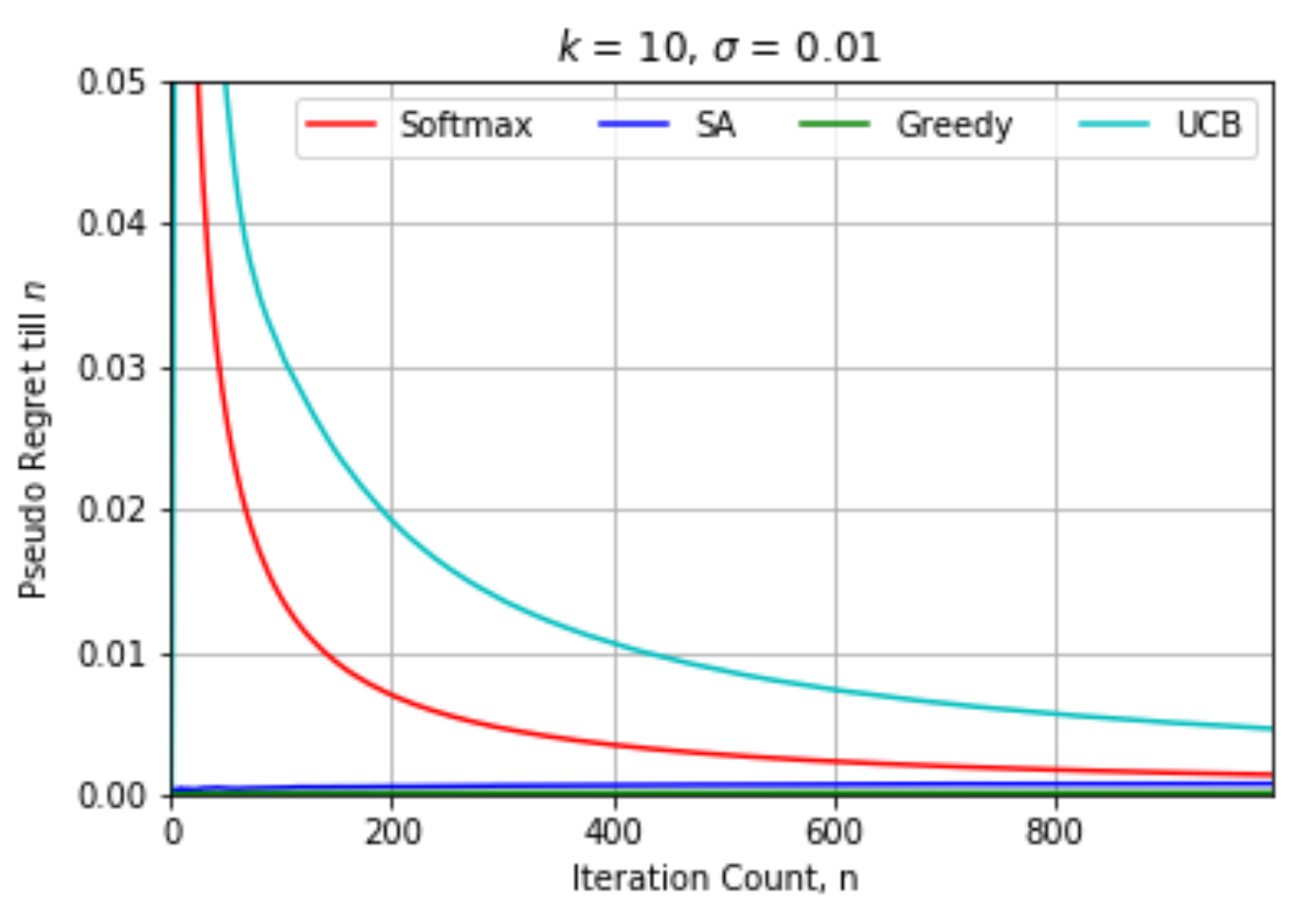}\\
  caption a & caption b\\
    
     \includegraphics[width=0.5\textwidth]{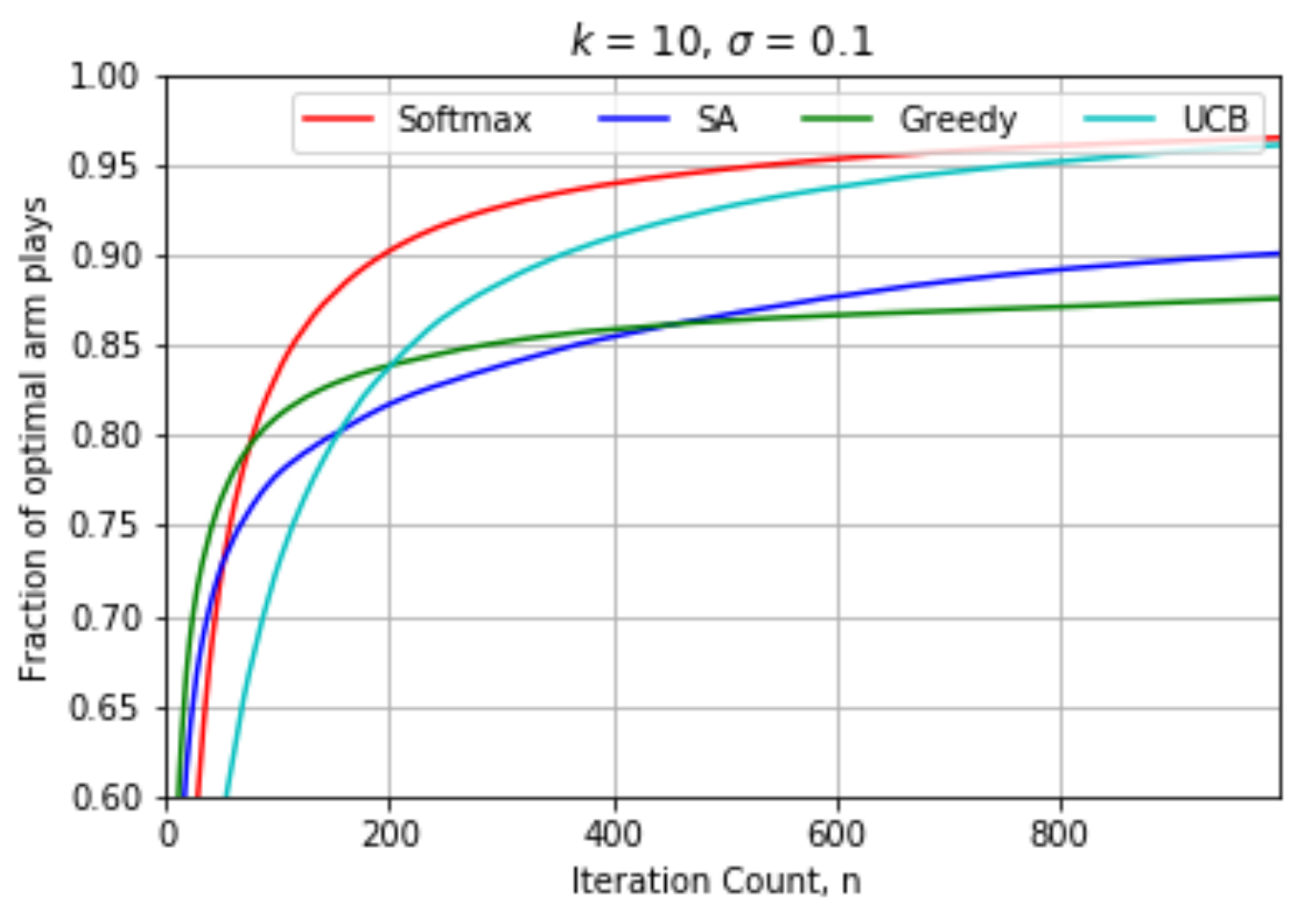}&
  \includegraphics[width=0.5\textwidth]{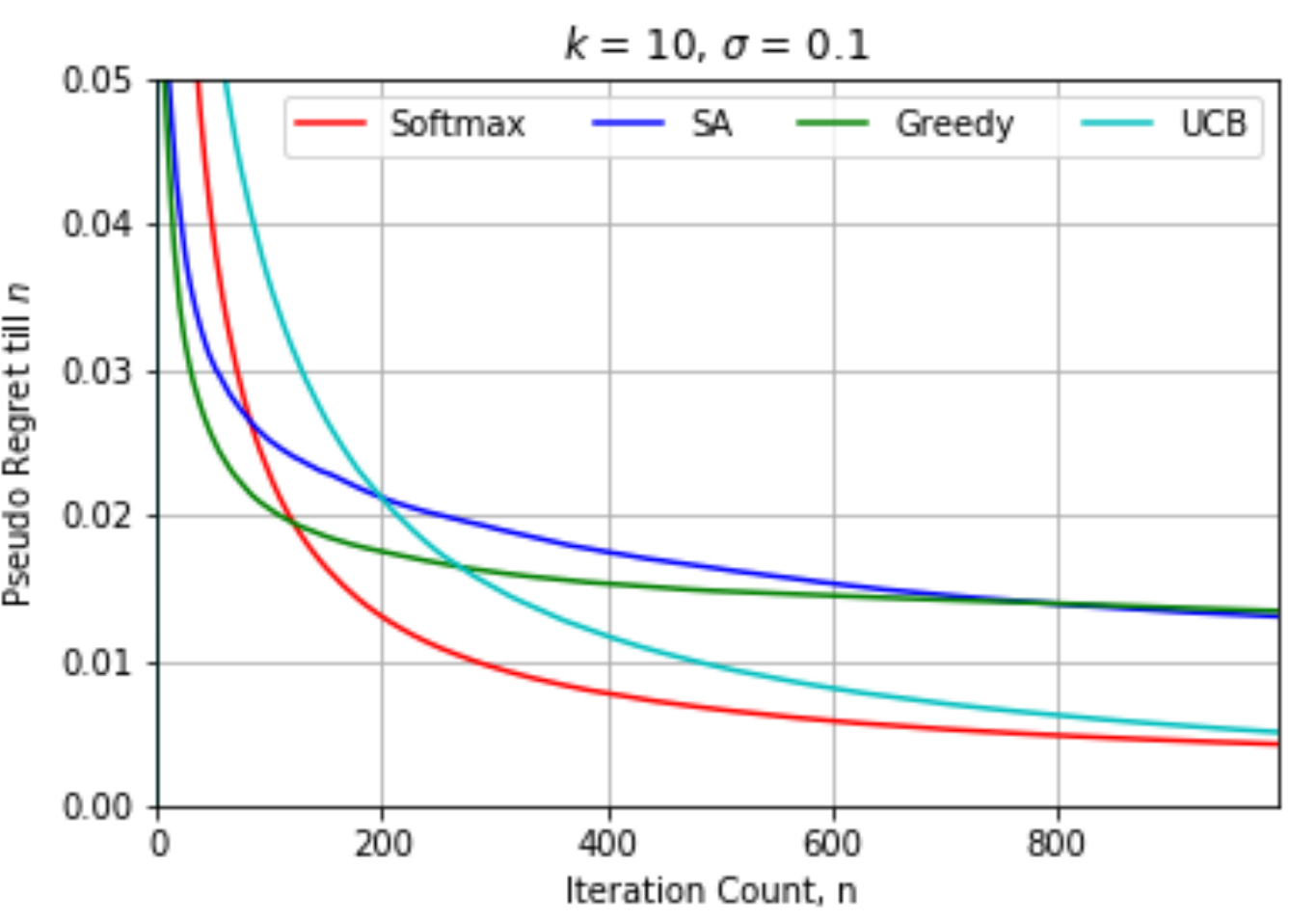}\\
  
   \includegraphics[width=0.5\textwidth]{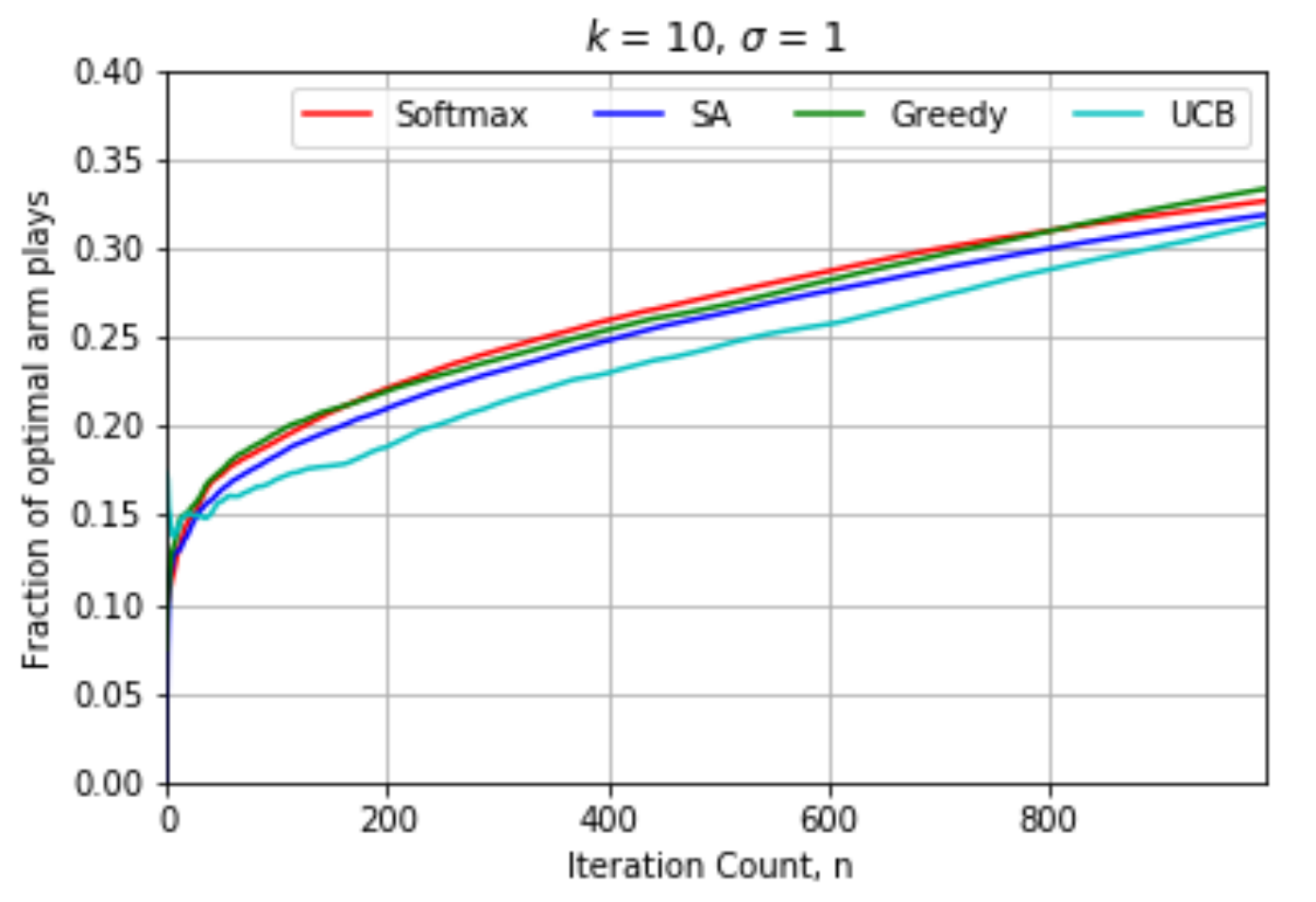}&
  \includegraphics[width=0.5\textwidth]{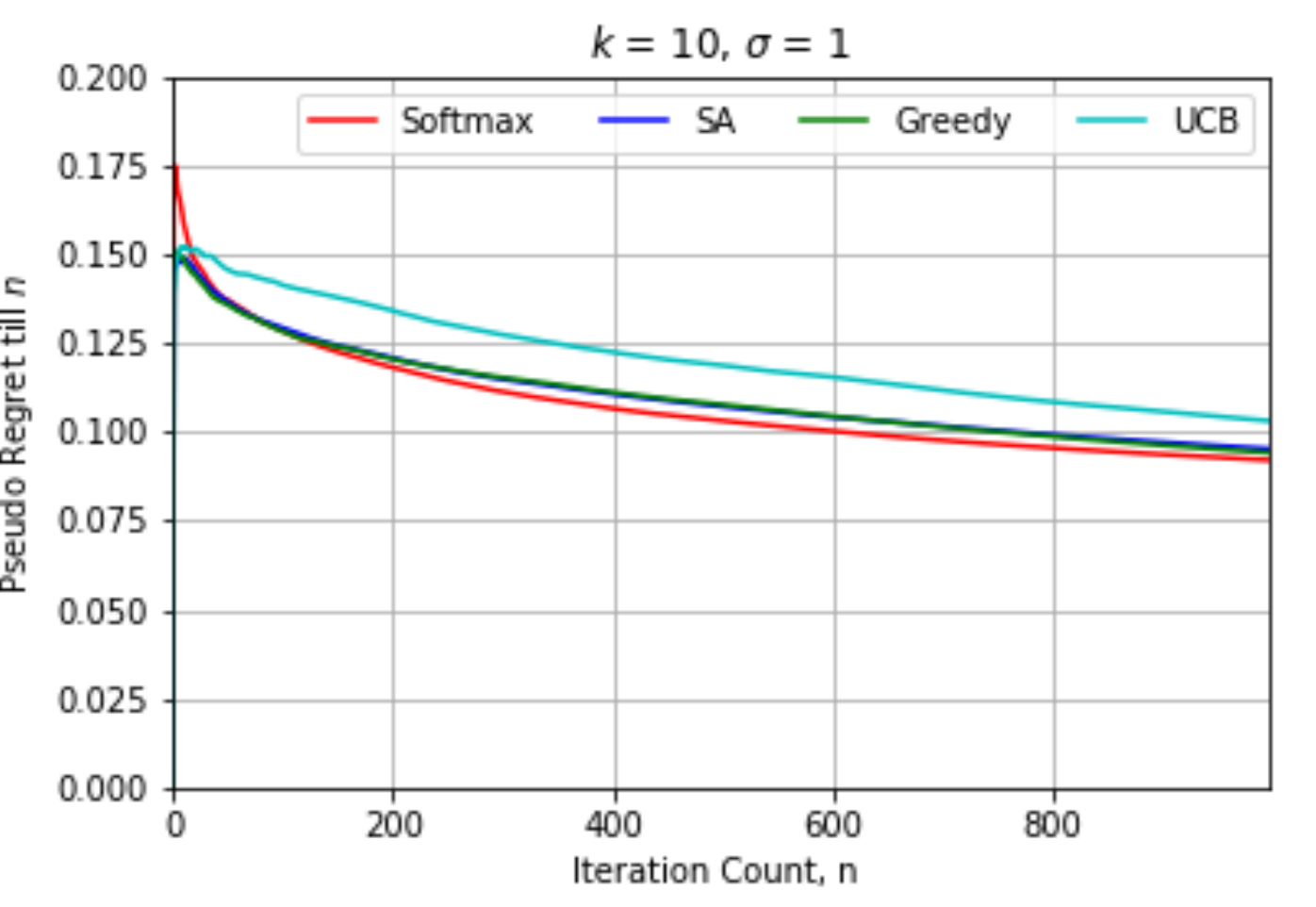}\\

\end{tabular}
\centering
\caption*{\textbf{Parameters ($k=10,\, \sigma \in\{0.01,0.1,1\}, \,\Delta_{\text{min}}=0.1$)}:\\
\textbf{$\sigma=0.01$} :  $\text{SA, }\gamma^{-1}= 0.001\,;\,\text{Softmax, } \tau = 0.001\,;\,\epsilon\text{-greedy, } : \epsilon = 0.001$.\\
 \textbf{$\sigma=0.1$} :  $\text{SA, }\gamma^{-1}= 0.01\,;\,\text{Softmax, } \tau = 0.01\,;\,\epsilon\text{-greedy, } : \epsilon = 0.005$.\\
  \textbf{$\sigma=1$} :  $\text{SA, }\gamma^{-1}= 0.05\,;\,\text{Softmax, } \tau = 0.05\,;\,\epsilon\text{-greedy, } : \epsilon = 0.1$.\\}
\end{table}
\newpage
\begin{table}[h]
\begin{tabular}{c@{}c@{}c}

  \includegraphics[width=0.5\textwidth]{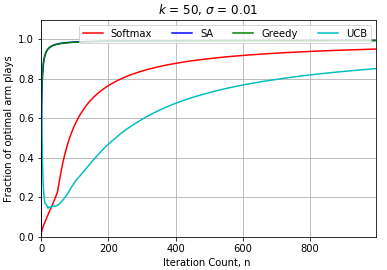}&
  \includegraphics[width=0.5\textwidth]{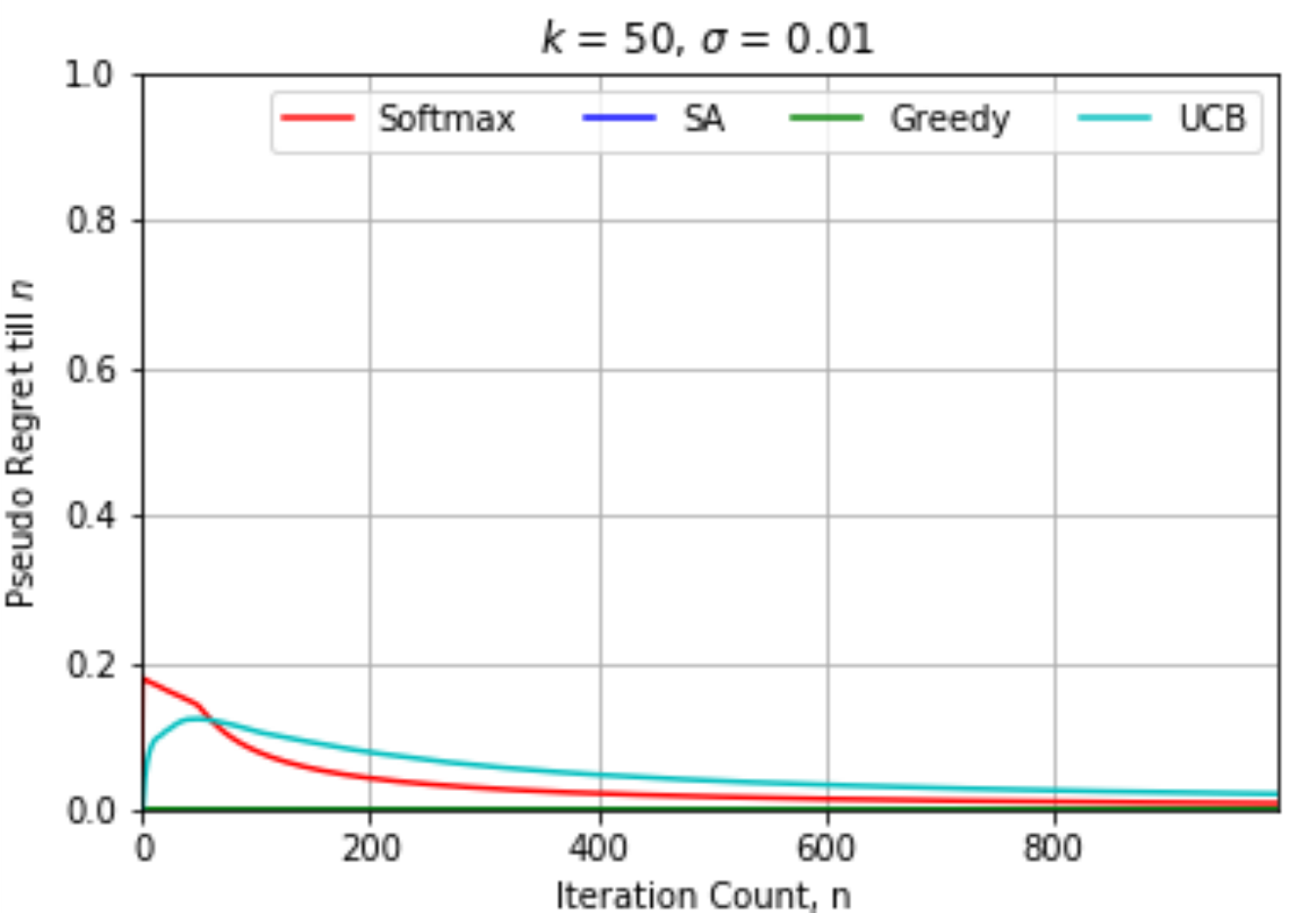}\\
    
     \includegraphics[width=0.49\textwidth]{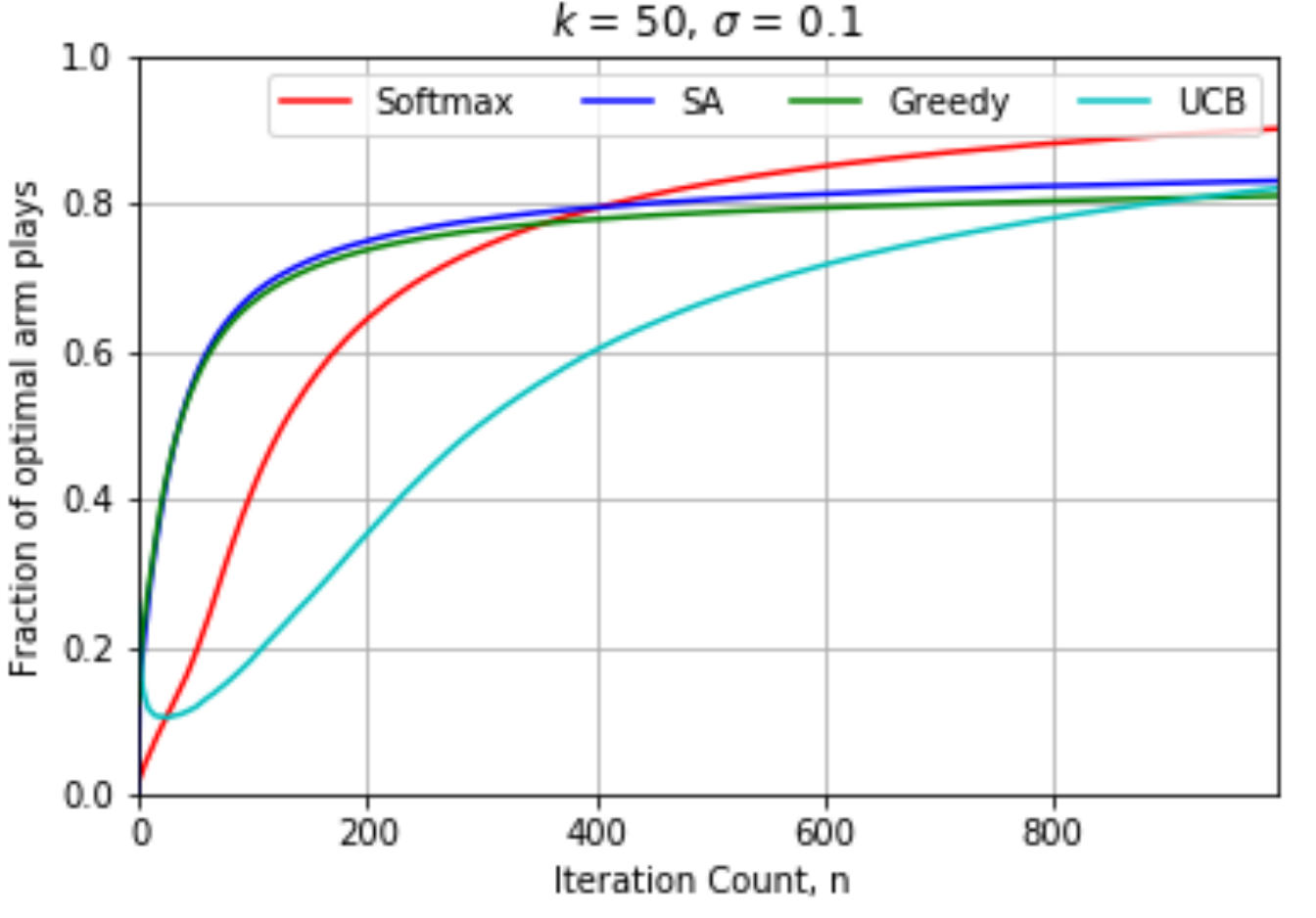}&
  \includegraphics[width=0.5\textwidth]{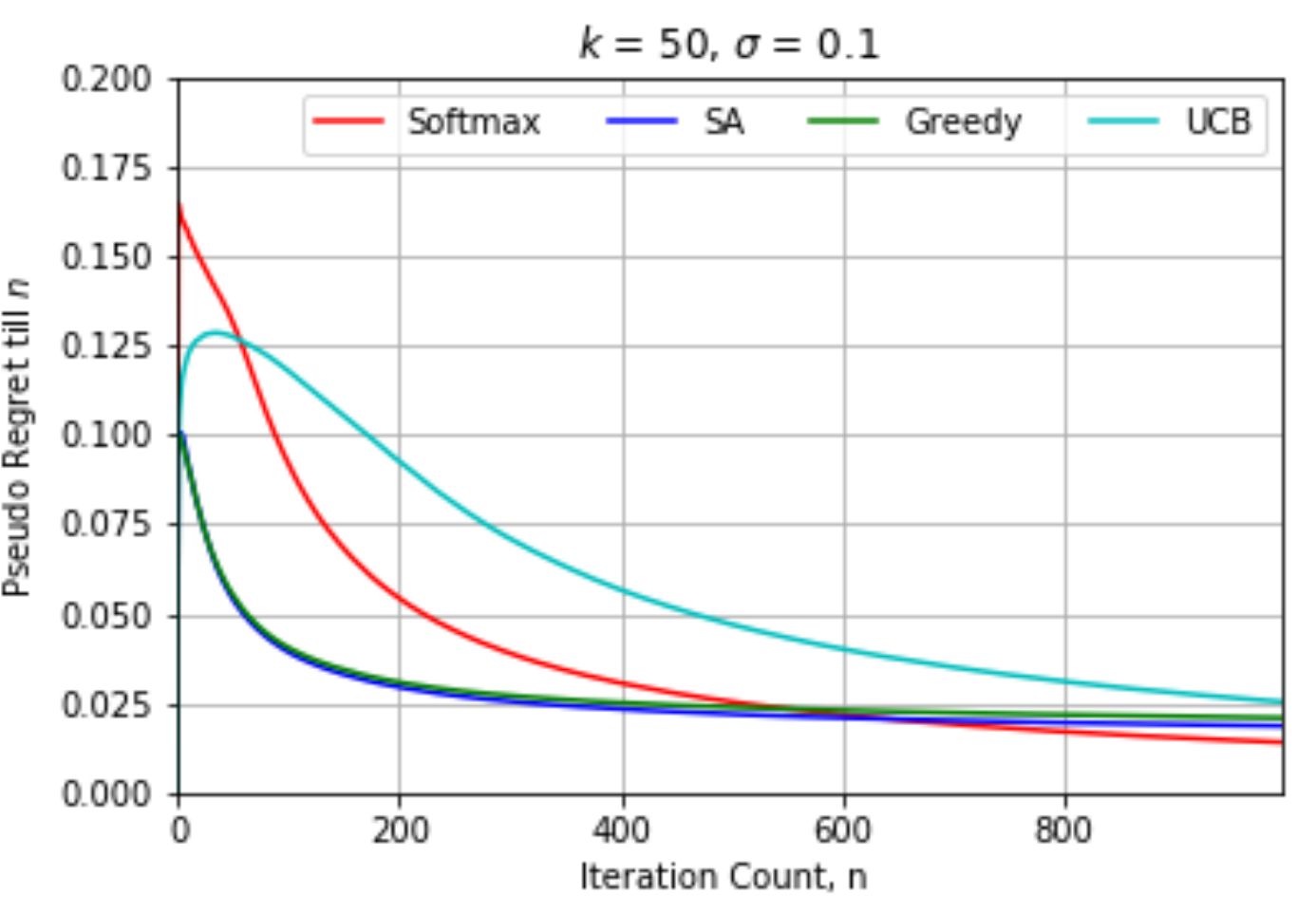}\\
  
   \includegraphics[width=0.5\textwidth]{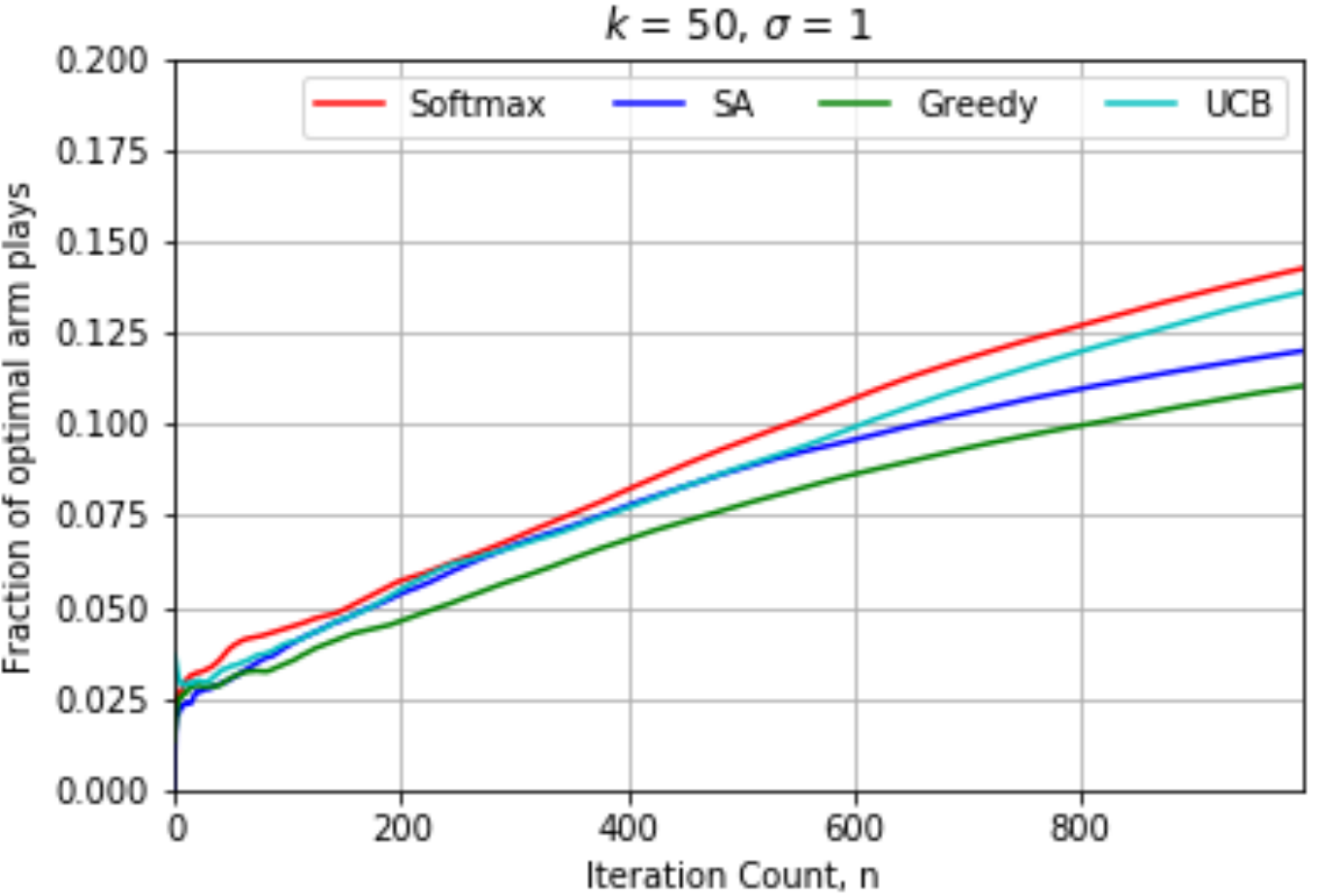}&
  \includegraphics[width=0.5\textwidth]{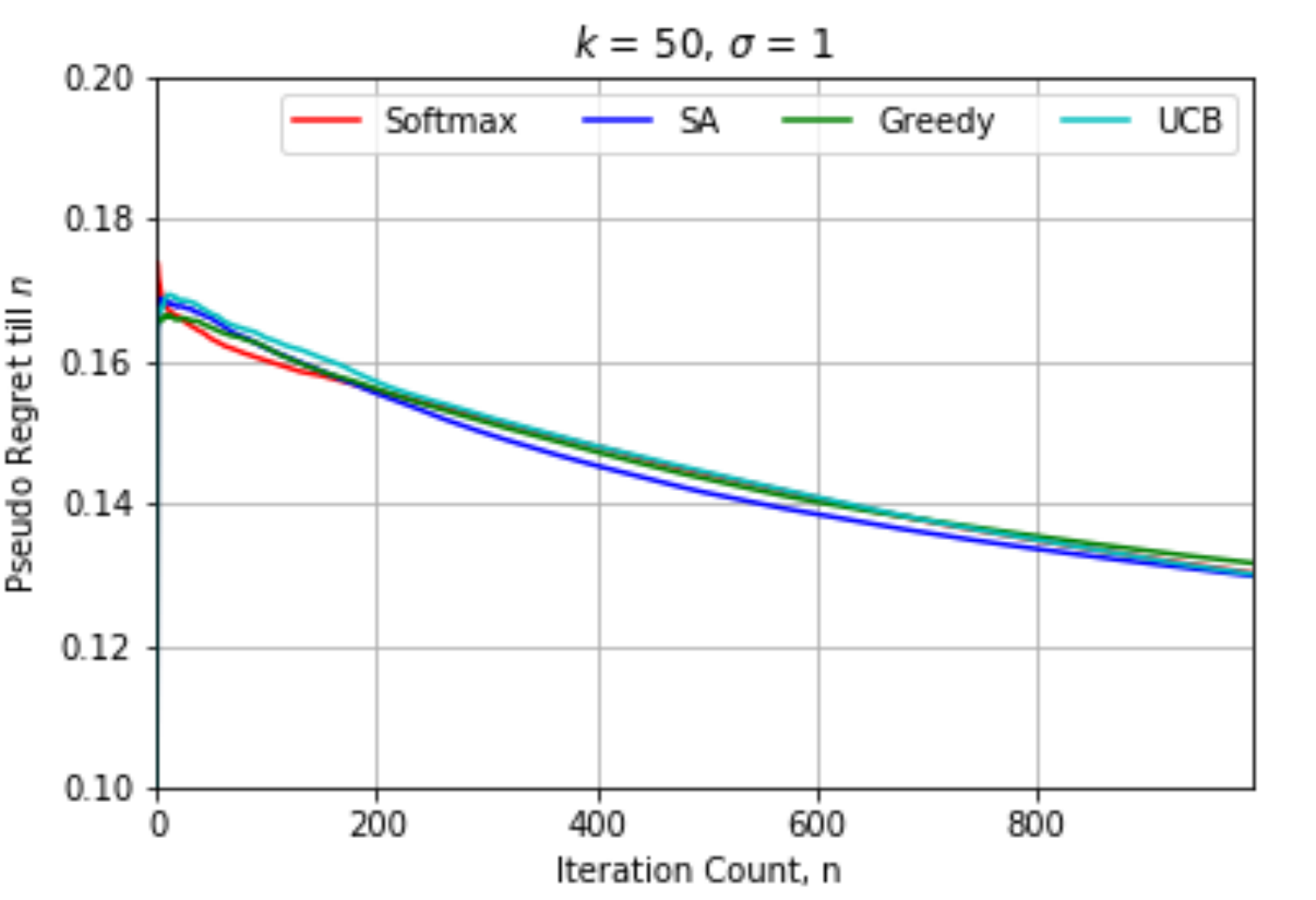}\\

\end{tabular}
\centering
\caption*{\textbf{Parameters ($k=50,\, \sigma \in\{0.01,0.1,1\}, \,\Delta_{\text{min}}=0.1$)}:\\
\textbf{$\sigma=0.01$} :  $\text{SA, }\gamma^{-1}= 0.001\,;\,\text{Softmax, } \tau = 0.001\,;\,\epsilon\text{-greedy, } : \epsilon = 0.005$.\\
 \textbf{$\sigma=0.1$} :  $\text{SA, }\gamma^{-1}= 0.01\,;\,\text{Softmax, } \tau = 0.01\,;\,\epsilon\text{-greedy, } : \epsilon = 0.005$.\\
  \textbf{$\sigma=1$} :  $\text{SA, }\gamma^{-1}= 0.01\,;\,\text{Softmax, } \tau = 0.007\,;\,\epsilon\text{-greedy, } : \epsilon = 0.01$.\\}
\end{table}

\section*{Appendix B. List of Notations}

\begin{table}
 \begin{tabular}{ ?p{5cm}?p{10cm}? }
\specialrule{1pt}{1pt}{1pt}
 \centering
Notation& $\,\,\,\,\,\,\,\,\,\,\,\,\,\,\,\,\,\,\,\,\,\,\,\,\,\,\,\,\,\,\,\,\,\,\,$Description  \\
 \specialrule{1pt}{1pt}{1pt}
$\|x\|,\,x\in \mathbb{R}^k$ & $\sum_{i \in k} |x_i|$.\\
 \specialrule{1pt}{1pt}{1pt}
 $\|P\|,\,P\in \mathbb{R}^{m \times n} $ & $\max_{1\leq i \leq m}\sum_{j = 1}^n |p_{ij}|$.\\
 \specialrule{1pt}{1pt}{1pt}
  $ \|f \|_{\pi},\,\, \,f : \A \to \mathbb{R}^{k} $ & $\sqrt{\langle|f|^2 \rangle_{\pi}} \,\,\text{ where }\,\, \langle g \rangle_{\pi} := \sum_{i \in \A} g_i \pi_i $.\\
 \specialrule{1pt}{1pt}{1pt}
 $\A $ & Arm Set.\\
 \specialrule{1pt}{1pt}{1pt}
 $\G =\{\A,\E\}$ & Graph with edge set $\E$.\\
 \specialrule{1pt}{1pt}{1pt}
 $n $ & Total number of rounds.\\
 \specialrule{1pt}{1pt}{1pt}
 $k $ & Total number of arms.\\
 \specialrule{1pt}{1pt}{1pt}
 $\mu $ & Loss vector with component $\mu_i$ denoting the loss of arm $i$.\\
  \specialrule{1pt}{1pt}{1pt}
 $\mathcal{E}^{k}_{\text{SG}}(\sigma^2)$ & Sub Gaussian Bandits with variance $\sigma^2$ and $|\A|=k$.\\
 \specialrule{1pt}{1pt}{1pt}
 $\hat{\mu}_{i}(t),\,\hat{\mu}_{i,T_a(t)} $ & Empirical average of the loss at arm $i$ at round $t$ or with $T_a(t)$ samples.\\
 \specialrule{1pt}{1pt}{1pt}
 $\Delta_i $ & $\mu_i -\mu_{a^*}$.\\
 \specialrule{1pt}{1pt}{1pt}
 $\nu(n)$ & Probability distribution of selecting the arms (i.e. policy) at time $n$.\\
 \specialrule{1pt}{1pt}{1pt}
 $T_a(t)$ & Total number of pulls of arm $a$ at round $t$.\\
 \specialrule{1pt}{1pt}{1pt}
 $T_p $ & Temperature/Cooling schedule during round $p$.\\
 \specialrule{1pt}{1pt}{1pt}
 $\gamma^* $ & Critical Depth (Definition 3).\\
 \specialrule{1pt}{1pt}{1pt}
 $\beta(t) $ & $1/T_t$.\\
 \specialrule{1pt}{1pt}{1pt}
 $P(t)$ & Transition probability at round $t$, see (\ref{MC}).\\
 \specialrule{1pt}{1pt}{1pt}
 $ \PPP(m,n) $ & $\prod_{t=m}^{n-1 }P(t)$.\\
  \specialrule{1pt}{1pt}{1pt}
 $ L $ &  $\max_{i\in \A}\max_{j \in \N(i)} |\mu_j - \mu_i|$.\\
  \specialrule{1pt}{1pt}{1pt}
  $ R $ &  $ \min_{i\in \A}\min_{j\in \N(i)} \frac{g(i,j)}{g(i)}$.\\
 \specialrule{1pt}{1pt}{1pt}
 $g(i,j)/g(i)$ & Probaiblity of selecting $j$ while in node $i$.\\
 \specialrule{1pt}{1pt}{1pt}
 $g(i)$ & $\sum_{j \in \N(i) }g(i,j)$\\
 \specialrule{1pt}{1pt}{1pt}
 $\pi(t)$ & Quasi stationary distribution of $P(t)$, see eq. (\ref{pi}).\\
 \specialrule{1pt}{1pt}{1pt}
  $\kappa(P)$ & Ergodicity co-efficient of matrix $P$.\\
 \specialrule{1pt}{1pt}{1pt}
  $ \text{Var}_{\pi}(f) $ & $ \|f- \langle f \rangle_{\pi}\|_{\pi}^2 $.\\
   \specialrule{1pt}{1pt}{1pt}
   $Q(t)$  & \textit{Q}-matrix.\\
 \specialrule{1pt}{1pt}{1pt}
   $\E_t(f,f) $ & $  \frac{1}{2} \sum_{\substack{i\in \A\\ j \neq i}} \pi_i Q_{ij}(t) (f_j-f_i)^2.$.\\
 \specialrule{1pt}{1pt}{1pt}
 $ \lambda(t) $ & $  \inf\{ \E_t(f,f)  :\,\,\text{Var}_{\pi(t)}(f) =1\}$.\\
 \specialrule{1pt}{1pt}{1pt}
  \end{tabular}
\end{table}

\bibliographystyle{ieeetr}
\bibliography{biblo}

\end{document}